\newtheorem{theorem}{Theorem}
\newtheorem{prop}[theorem]{Proposition}
\newtheorem{conj}[theorem]{Conjecture}
\newtheorem{defn}[theorem]{Definition}
\dedicatory{Dedicated to George E. Andrews for his 80th birthday.}
\begin{document}

\title[Andrews-Gordon Type Series]{Andrews-Gordon Type Series for Kanade-Russell Conjectures}

\author[Kur\c{s}ung\"{o}z]{Ka\u{g}an Kur\c{s}ung\"{o}z}
\address{Faculty of Engineering and Natural Sciences, Sabanc{\i} University, \.{I}stanbul, Turkey}
\email{kursungoz@sabanciuniv.edu}

\subjclass[2010]{05A17, 05A15, 11P84}

\keywords{Partition Generating Function, Andrews-Gordon identities, Kanade-Russell Conjectures}

\date{August 2018}

\begin{abstract}
\noindent
We construct Andrews-Gordon type evidently positive series 
as generating functions of partitions satisfying certain difference conditions 
in six conjectures by Kanade and Russell.  
We construct generating functions for missing partition enumerants, 
naturally without claiming new partition identities.  
Thus, we obtain $q$-series conjectures as companions to 
Kanade and Russell's combinatorial conjectures.  
\end{abstract}

\maketitle


\section{Introduction}
\label{secIntro}

In November of 2014, Kanade and Russell announced six new 
partition identities using some computer help \cite{KR-conj}.  
The difference conditions on partitions are inspired by Capparelli's identities 
\cite{Capparelli-Conj, AAG-Capparelli}.  

The first of the conjectures is given below.  
\begin{conj}[Kanade-Russell conjecture $I_1$]
  The number of partitions of a non-negative integer 
  into parts $\equiv \pm 1, \pm 3 \pmod{9}$ 
  is the same as the number of partitions 
  with difference at least three at distance two 
  such that if two successive parts differ by at most one, 
  then their sum is divisible by three.  
\end{conj}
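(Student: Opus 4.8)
The plan is to reduce the stated equality of partition counts to a single analytic identity between an \emph{evidently positive} series and an infinite product, and then to prove that identity. Writing $C(q)=\prod_{\substack{n\geq 1\\ n\equiv \pm1,\pm3\,(\mathrm{mod}\,9)}}(1-q^{n})^{-1}$ for the generating function of the residue-class side (parts $\equiv 1,3,6,8\pmod 9$), and letting $D(q)$ denote the generating function of the partitions obeying the difference-at-distance-two condition together with the divisibility-by-three constraint on the sum of any two successive parts differing by at most one, the theorem is exactly the assertion $C(q)=D(q)$. Both are manifestly power series with nonnegative coefficients, so it suffices to establish this as an identity of formal $q$-series.

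First I would construct $D(q)$ explicitly as an Andrews--Gordon type multisum. Because all the constraints couple parts at distance at most two and involve residues modulo three, I would scan a partition from its largest part, recording at each stage the two most recent parts and their residues $\bmod\,3$; this produces a finite system of $q$-difference equations in an auxiliary variable $x$ marking the number of parts (or, dually, the smallest part). Solving the system and specializing $x=1$ yields a closed, evidently positive multisum for $D(q)$ of Andrews--Gordon shape, with quadratic exponents built from the partial sums $N_i=n_i+\cdots$ and linear corrections encoding the residue bookkeeping. This step is combinatorial and, while intricate, is essentially algorithmic.

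The substantive step is to prove that this multisum equals $C(q)$, and this is where I expect the main obstacle to lie. The difficulty is structural: the residue set $\{\pm1,\pm3\}\pmod 9$ excludes \emph{five} classes modulo $9$ (namely $0,2,4,5,7$) rather than the symmetric triple $\{0,\pm i\}$ of the classical Andrews--Gordon identities, so $C(q)$ is not a standard Andrews--Gordon character and no off-the-shelf Bailey pair produces it. I would attack $C(q)=D(q)$ by one of two routes. In the analytic route I would derive from the same $q$-difference system a functional equation satisfied by $D$, verify that $C$ satisfies the identical equation with matching initial data, and supply the missing seed by locating—or constructing, via Bailey-pair insertion along a Bailey chain—a pair whose limit reproduces the mod-$9$ product. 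In the structural route I would identify the difference-condition partitions with cylindric partitions of a specific profile, at the rank and level giving modulus $9$, realize $C(q)$ as their generating function by Borodin's product formula, and use the Corteel--Welsh recurrence to certify that $D(q)$ coincides with that cylindric generating function.

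In either route the crux is the same: matching the precise arithmetic data $\{\pm1,\pm3\}\pmod 9$ to the correct automorphic object (the Bailey seed, respectively the cylindric profile), precisely because the identity lies outside the classical Andrews--Gordon family. Once that matching is fixed, the remainder—checking that $C(q)$ and $D(q)$ obey the same recurrence and agree to enough initial orders to force equality—is mechanical. I therefore expect this identification, rather than any single computation, to be the principal difficulty, and I would regard the explicit construction of $D(q)$ in the second paragraph as the natural bridge that makes it attackable.
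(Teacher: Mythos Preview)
The paper does not prove this statement: it is labeled a \emph{Conjecture} and the introduction says explicitly that ``this paper, unfortunately, is no attempt to prove them.'' What the paper actually establishes is only the first half of your plan: it constructs the generating function $D(q)$ of the difference-condition side as the explicit double sum
\[
D(q)=\sum_{n_1,n_2\ge 0}\frac{q^{3n_2^2+n_1^2+3n_1n_2}}{(q;q)_{n_1}(q^3;q^3)_{n_2}},
\]
and then in Section~\ref{secKR-qSeries} records the equality $C(q)=D(q)$ as a further conjecture, still marked with ``$\stackrel{?}{=}$''. So your step~(2)---proving the multisum equals the mod-$9$ product---is precisely what the paper leaves open.

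Even for step~(1), your method diverges from the paper's. You propose to set up a finite system of $q$-difference equations by tracking the two most recent parts and their residues, then solve and specialize. The paper instead builds a direct bijection: it Gordon-marks each admissible partition, decomposes it into $1$-clusters and $2$-clusters, identifies a minimal-weight ``base partition'' $\beta$ with prescribed cluster counts $(n_1,n_2)$, and shows via explicit forward/backward moves that every admissible $\lambda$ corresponds uniquely to a triple $(\beta,\mu,\eta)$ with $\mu$ an unrestricted partition of length $\le n_1$ and $\eta$ a partition into multiples of~$3$ of length $\le n_2$. This yields the double sum immediately, with no recursion to solve. Your recursive approach would presumably arrive at the same series, but the paper's bijective route is more transparent about why the factors $(q;q)_{n_1}^{-1}$ and $(q^3;q^3)_{n_2}^{-1}$ appear.

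As for step~(2), your two proposed attacks (Bailey pairs, cylindric partitions) are reasonable directions, but neither is carried out or even sketched in the paper; the author poses exactly this as an open problem in the concluding section. Your proposal is therefore not a proof but a program, and its ``substantive step'' remains genuinely open as far as this paper is concerned.
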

Here, difference at distance two means the difference between $i$th and $(i+2)$th parts.  
The former condition in the conjecture is a congruence condition, 
and the latter is a difference condition.  
For example, $n = 9$ has seven partitions satisfying the first constraint: 
\begin{align*}
 1+1+\cdots+1, & \quad 1+1+\cdots+1+3, \quad 1+1+1+3+3, \\
 1+1+1+6, & \quad 1+8, \quad 3+3+3, \quad 3+6, 
\end{align*}
as well as seven partitions satisfying the second constraint: 
\begin{align*}
 9, \quad 1+8, \quad 2+7, \quad 3+6, \quad 1+3+5, \quad 4+5, \quad 1+2+6.  
\end{align*}

A quote attributed to the late A.O.L. Atkin 
asserts that it is often easier to prove identities in the 
theory of $q$-series than to discover them.  
Kanade and Russell's conjectures have been counterexamples, 
since they evaded proof for more than three years so far.  
This paper, unfortunately, is no attempt to prove them.  

The goal of this paper is to construct Andrews-Gordon type series 
as generating functions of the partitions in the conjectures.  
In particular, generating functions for partitions satisfying the difference conditions 
in them will be constructed.  
Gordon marking of a partition and clusters will be utilized \cite{K-Clusters}.   

The next section lists the definitions and a small result 
that will be used throughout the paper.  
Section \ref{secKR1-4} deals with the first four or the ``$\pmod{9}$'' conjectures 
and some missing cases.  
Section \ref{secKR5-6} treats the last two or the ``$\pmod{12}$'' conjectures 
and some missing cases.  
Section \ref{secKR5-6Alt} lists alternative generating functions of section \ref{secKR5-6}.  
We do not assert any partition identities 
for the missing cases in sections \ref{secKR1-4}-\ref{secKR5-6Alt}.  
In the short section \ref{secKR-qSeries}, we collect some of the constructed series thus far, 
and state $q$-series conjectures as analytic companions for the 
Kanade-Russell's combinatorial conjectures.  
We conclude with some commentary, a few open problems, 
and some directions for further research in section \ref{secConclusion}.  
The appendix by Emre Erol contains a metaphor and explanation 
for parts of a construction in section \ref{secKR5-6} and related terminology.  

\section{Definitions and Preliminary Results}
\label{secDefns}

An integer partition $\lambda$ of a natural number $n$ is a non-decreasing sequence of positive integers 
that sum up to $n$.  
\begin{align*}
 n = & \lambda_1 + \lambda_2 + \cdots + \lambda_m , \\
 & \lambda_1 \leq \lambda_2 \leq \cdots \leq \lambda_m 
\end{align*}
The $\lambda_i$'s are called parts.  
The number of parts $m$ is called the length of the partition $\lambda$, denoted by $l(\lambda)$.  
The number being partitioned is the weight of the partition $\lambda$, denoted by $\vert \lambda \vert$.  
One could also reverse the weak inequalities and take non-decreasing sequences, 
but we will stick to this definition for purposes of this note.  
The point is that reordering the same parts will not give us a new partition.  
For example, the five partitions of $n = 4$ are
\begin{align*}
 4, \quad 1+3, \quad 2+2, \quad 1+1+2, \quad 1+1+1+1.  
\end{align*}

We sometimes allow zeros to appear in the partition.  
Clearly, they have no contribution to the weight of the partition, 
but the length changes as we add or take out zeros.  

Given a partition $\lambda$, if there exists positive integers $d$ and $k$ such that
$\lambda_{j+d} - \lambda_{j} \geq k$ for all $j = 1, 2, \ldots, l(\lambda)-d$, 
we say that $\lambda$ \emph{has difference at least $k$ at distance d}.  

Many partition identities have the form 
``the number of partitions of $n$ satisfying condition A 
= the number of partitions of $n$ satisfying condition B'' \cite{Andrews-bluebook}.  
We can abbreviate this as $p(n \vert \textrm{ cond. } A)$ 
$= p(n \vert \textrm{ cond. } B)$.  
Any form of the series
\begin{align*}
 F(q) = \sum_{n \geq 0} p(n \vert \textrm{ cond. } A) q^n
\end{align*}
is called a \emph{partition generating function}.  
Or $F(q)$ is said to generate $p(n \vert \textrm{ cond. } A)$.  

The definitions below are taken from \cite{K-Clusters}.  
Although they are lengthy, they are included here for self-containment.  

\begin{defn}
\label{defGordonMarking}
 The \emph{Gordon marking} of a partition $\lambda$ is an assignment of positive integers (marks) to 
 $\lambda$ such that parts equal to any given integer $a$ are assigned distinct marks from the set 
 $\mathbb{Z}_{>0} \backslash \{ r \vert \exists \; r\textrm{-marked } \lambda_j = a-1 \}$ 
 such that the smallest possible marks are used first.  
 We can repsesent the Gordon marking by a two-dimensional array, 
 where the row index, counted from bottom to top indicates the mark.  
\end{defn}

{\bf Example: } For the partition 
\begin{align*}
 \lambda = 2+2+3+4+5+6+6+7+9+11+13+13+15+15+16+17+18, 
\end{align*}
the Gordon marking is 
\begin{align*}
 \lambda = & 2_1+2_2+3_3+4_1+5_2+6_1+6_3+7_2+9_1+11_1 \\ 
 & +13_1+13_2+15_1+15_2+16_3+17_1+18_2, 
\end{align*}
or 
\begin{align*}
  \left\{
    \begin{array}{cc} & 3 \\ 2 & \\ 2 & \end{array} 
    \begin{array}{cccc} & & 6 & \\ & 5 & & 7 \\ 4 & & 6 & \end{array} 
    \begin{array}{cc} \\ \\ 9 \end{array} 
    \begin{array}{cc} \\ \\ 11 \end{array} 
    \begin{array}{cc} \\ 13 \\ 13 \end{array} 
    \begin{array}{cc} & 16 \\ 15 & \\ 15 & \end{array} 
    \begin{array}{cc} \\ & 18 \\ 17 & \end{array} 
  \right\}
    \begin{array}{cc} \\ \\ . \end{array} 
\end{align*}

This last representation of partitions will be used throughout the note.  

\begin{defn}
\label{defFwdMove}
  Given a partition $\lambda$, 
  let $\lambda_j$ be an $r$-marked part such that 
  
  \begin{itemize}
    
   \item[(a)] there are no $r+1$ or higher marked parts $= \lambda_j$ or $= \lambda_j + 1$; 
   
   \item[(b1)] either there is an $r_0$ marked part $\lambda_{j_0} = \lambda_j - 1$, 
    $r_0 < r$ such that there are no $r_0$-marked parts $= \lambda_j + 1$, 
    and no $r_0 + 1$ or higher marked parts equal to $\lambda_j-1$, 
   
   \item[(b2)] or there are $1, 2, \ldots, (r-1)$-marked parts $= \lambda_j$ or $= \lambda_j+1$, 
    and no $r$-marked parts $= \lambda_j+2$.  
   
  \end{itemize}
  A \emph{forward move of the $r$th kind} is replacing the $r_0$-marked $\lambda_{j_0}$ 
  with an $r_0$ marked $\lambda_{j_0} + 1$ if (a) and (b1) hold; 
  and replacing the $r$-marked $\lambda_j$ with an $r$-marked $\lambda_j+1$ 
  if (a) and (b2) hold, but (b1) fails.  
\end{defn}

{\bf Example: }
A forward move of the 3rd kind on the 3-marked 16 (in boldface)
of the partition in the above example makes the partition 
\begin{align*}
  \left\{
    \begin{array}{cc} & 3 \\ 2 & \\ 2 & \end{array} 
    \begin{array}{cccc} & & 6 & \\ & 5 & & 7 \\ 4 & & 6 & \end{array} 
    \begin{array}{cc} \\ \\ 9 \end{array} 
    \begin{array}{cc} \\ \\ 11 \end{array} 
    \begin{array}{cc} \\ 13 \\ 13 \end{array} 
    \begin{array}{cc} & \mathbf{16} \\ & 16 \\ 15 & \end{array} 
    \begin{array}{cc} \\ & 18 \\ 17 & \end{array} 
  \right\}
    \begin{array}{cc} \\ \\ . \end{array} 
\end{align*}

\begin{defn}
\label{defBackwdMove}
 For a partition $\lambda$, 
 let $\lambda_j \neq 1$ be an $r$-marked part such that 
 \begin{itemize}
  
  \item[(c)] there are no $(r+1)$ or greater marked parts that are $=\lambda_j$ or $= \lambda_j+1$, 
  
  \item[(d)] there is an $r_0 \leq r$ such that there is an $r_0$-marked $\lambda_{j_0}=\lambda_j$, 
    but no $r_0$-marked parts $=\lambda_j-2$.  
  
 \end{itemize}
  Choose the smallest $r_0$ described in (d).  
  A \emph{backward move of the $r$th kind} on $\lambda_j$ 
  is replacing the $r_0$-marked $\lambda_{j_0}$ with an $r_0$-marked $\lambda_{j_0}-1$.  
\end{defn}

{\bf Example: }
A backward move of the 3rd kind on the 3-marked 6 of the last displayed partition makes it 
\begin{align*}
  \left\{
    \begin{array}{cc} & 3 \\ 2 & \\ 2 & \end{array} 
    \begin{array}{cccc} & \mathbf{5} & & \\ & 5 & & 7 \\ 4 & & 6 & \end{array} 
    \begin{array}{cc} \\ \\ 9 \end{array} 
    \begin{array}{cc} \\ \\ 11 \end{array} 
    \begin{array}{cc} \\ 13 \\ 13 \end{array} 
    \begin{array}{cc} & 16 \\ & 16 \\ 15 & \end{array} 
    \begin{array}{cc} \\ & 18 \\ 17 & \end{array} 
  \right\}
    \begin{array}{cc} \\ \\ . \end{array} 
\end{align*}
The 6 becomes 5 (in boldface).  

\begin{defn}
\label{defCluster}
 An \emph{$r$-cluster} in $\lambda = \lambda_1 + \lambda_2 + \cdots + \lambda_m$ 
 is a sub partition $\lambda_{i_1} \leq \lambda_{i_2} \leq \cdots \leq \lambda_{i_r}$
 such that $\lambda_{i_j}$ is $j$-marked for $j = 1, 2, \ldots, r$, 
 $\lambda_{i_{j+1}} - \lambda_{i_j} = 0$ or 1 for $j = 1, 2, \ldots, r-1$, 
 and there are no $(r+1)$-marked parts $= \lambda_{i_r}$ or $= \lambda_{i_r}+1$.  
\end{defn}

{\bf Example: } 
\begin{align*}
  \left\{ 
    \begin{array}{cc} & 3 \\ 2 & \\ 2 & \end{array} 
    \begin{array}{cccc} & & 6 & \\ & 5 & & 7 \\ 4 & & 6 & \end{array} 
    \begin{array}{cc} \\ \\ 9 \end{array} 
    \begin{array}{cc} \\ \\ 11 \end{array} 
    \begin{array}{cc} \\ 13 \\ 13 \end{array} 
    \begin{array}{cc} & 16 \\ 15 & \\ 15 & \end{array} 
    \begin{array}{cc} \\ & 18 \\ 17 & \end{array} 
  \right\} 
\end{align*}
has the following clusters.  
\begin{align*}
  \left\{ \begin{array}{c} \\ \\ \phantom{0} \end{array} \right.
    \underbrace{
    \begin{array}{cc} & 3 \\ 2 & \\ 2 & \end{array} 
    }_\textrm{a 3-cluster} \;
    \underbrace{
    \begin{array}{ccc} & & 6\\ & 5 & \\ 4 & &\end{array} 
    }_\textrm{a 3-cluster} \;
    \underbrace{
    \begin{array}{cc} & \\ & 7 \\ 6 & \end{array} 
    }_\textrm{a 2-cluster} \;
    \underbrace{
    \begin{array}{cc} \\ \\ 9 \end{array} 
    }_\textrm{a 1-cluster} \;
    \underbrace{
    \begin{array}{cc} \\ \\ 11 \end{array} 
    }_\textrm{a 1-cluster} \;
    \underbrace{
    \begin{array}{cc} \\ 13 \\ 13 \end{array} 
    }_\textrm{a 2-cluster} \;
    \underbrace{
    \begin{array}{cc} & 16 \\ 15 & \\ 15 & \end{array} 
    }_\textrm{a 3-cluster} \;
    \underbrace{
    \begin{array}{cc} \\ & 18 \\ 17 & \end{array} 
    }_\textrm{a 2-cluster} 
  \left. \begin{array}{c} \\ \\ \phantom{0} \end{array} \right\}
\end{align*}

When we compare two clusters, not necessarily having the same number of parts, 
we compare the 1-marked parts in them.  
The \emph{largest 2-cluster} means the 2-cluster having the largest 1-marked part etc.  

We will also need the following result in section \ref{secKR5-6}.  

\begin{prop}
\label{propDistinctOdds}
  The partitions into at most $n$ parts, 
  where no odd part repeats is generated by 
  $\displaystyle \frac{(-q; q^2)_n}{(q^2; q^2)_n}$.  
\end{prop}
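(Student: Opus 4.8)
The plan is to split a partition of the asserted type into its odd parts and its even parts, handle the two pieces independently, and then recognize the resulting sum via the terminating $q$-binomial theorem. Write $F(q)$ for the generating function in the statement. Given a partition $\lambda$ with at most $n$ parts in which no odd part repeats, let $i$ be the number of odd parts of $\lambda$. Then $0 \le i \le n$ (an $(n+1)$st odd part would already force more than $n$ parts), the odd parts of $\lambda$ form a partition into exactly $i$ distinct odd parts, and the even parts of $\lambda$ form a partition into at most $n-i$ even parts; these two sub-partitions may be chosen independently of one another.

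First I would compute the two factors. Dividing every even part by $2$ identifies the even part of $\lambda$ with an arbitrary partition into at most $n-i$ parts, which is generated by $1/(q^2;q^2)_{n-i}$. For the odd part, if $o_1 < o_2 < \cdots < o_i$ are the distinct odd parts, I would write $o_j = (2j-1) + 2b_j$; the inequalities $o_{j+1} \ge o_j + 2$ become $0 \le b_1 \le b_2 \le \cdots \le b_i$, and $\sum_j o_j = i^2 + 2\sum_j b_j$, so the odd part is generated by $q^{i^2}/(q^2;q^2)_i$. Multiplying the two factors and summing over $i$ then gives
\begin{equation*}
 F(q) \;=\; \sum_{i=0}^{n} \frac{q^{i^2}}{(q^2;q^2)_i \, (q^2;q^2)_{n-i}} ,
\end{equation*}
the boundary terms $i=0$ and $i=n$ contributing $1/(q^2;q^2)_n$ and $q^{n^2}/(q^2;q^2)_n$ respectively.

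It then remains to match this with $(-q;q^2)_n/(q^2;q^2)_n$. Multiplying through by $(q^2;q^2)_n$, the claim becomes
\begin{equation*}
 \sum_{i=0}^{n} q^{i^2} \left[ {n \atop i} \right]_{q^2} \;=\; (-q;q^2)_n ,
 \qquad
 \left[ {n \atop i} \right]_{q^2} := \frac{(q^2;q^2)_n}{(q^2;q^2)_i\,(q^2;q^2)_{n-i}} .
\end{equation*}
This is precisely the terminating $q$-binomial theorem $\prod_{j=0}^{n-1}(1+zq^j) = \sum_{i=0}^{n} q^{\binom{i}{2}} \left[{n \atop i}\right]_q z^i$ after the substitutions $q \mapsto q^2$ and $z \mapsto q$ (note $q^{2\binom{i}{2}}q^{i} = q^{i^2}$ and $\prod_{j=0}^{n-1}(1+q^{2j+1}) = (-q;q^2)_n$), which I would simply quote; alternatively, one reads $q^{i^2}\left[{n\atop i}\right]_{q^2}$ off directly as the generating function for partitions into exactly $i$ distinct odd parts, each at most $2n-1$, and sums over $i$. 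So the only ingredient beyond routine bookkeeping is this standard identity: the step I would single out as the crux is recognizing that the odd/even split reduces the proposition to a known $q$-series identity rather than requiring a bespoke bijection; once that is seen, I anticipate no real obstacle, the only points needing care being the range of $i$ and the two boundary cases noted above.
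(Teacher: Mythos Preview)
Your argument is correct, but it takes a different route from the paper's. The paper's proof is essentially one line: it invokes the (Cauchy) $q$-binomial theorem
\[
  \frac{(-qt;q^2)_\infty}{(t;q^2)_\infty} \;=\; \sum_{n\ge 0}\frac{(-q;q^2)_n}{(q^2;q^2)_n}\,t^n,
\]
observes that the left-hand side is manifestly the generating function for partitions with no repeated odd parts where the exponent of $t$ records the number of parts (zeros allowed), and reads off the coefficient of $t^n$. You instead perform the odd/even split by hand, arrive at the sum $\sum_{i=0}^n q^{i^2}/\bigl((q^2;q^2)_i(q^2;q^2)_{n-i}\bigr)$, and then collapse it with the \emph{terminating} $q$-binomial theorem. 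Both approaches ultimately rest on the same identity, but the paper exploits the bivariate product once and for all, while your route is more explicitly combinatorial up to the last step; indeed, the paper remarks just after its proof that a purely combinatorial argument exists and would amount to a ``twist of a combinatorial proof of the $q$-binomial theorem,'' which is exactly what your decomposition provides. Your version has the modest advantage of making the role of the number of odd parts visible, at the cost of one extra computation.
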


\begin{proof}
 By the $q$-binomial theorem \cite{Andrews-bluebook}, 
 \begin{align*}
  \frac{(-qt; q^2)_\infty }{ (t; q^2)_\infty } 
  = \sum_{n \geq 0} \frac{ (-q; q^2)_n }{ (q^2; q^2)_n } t^n.  
 \end{align*}
 The right hand side obviously generates partitions in which no odd part repeats, 
 and the exponent of $t$ accounts for the number of parts, zeros allowed.  
\end{proof}

Here, and throughout, 
\begin{align*}
 (a; q)_n = & \prod_{j = 0}^n (1 - aq^{j-1}), \\
 (a_1, a_2, \ldots, a_k; q)_n = & (a_1; q)_n (a_2; q)_n \cdots (a_k; q)_n 
\end{align*}
for $n \in \mathbb{N} \cup \{\infty\}$ and $\vert q \vert < 1$.  

It is also possible to give a purely combinatorial proof of Proposition \ref{propDistinctOdds}.   
However, it will just be a twist of a combinatorial proof of the $q$-binomial theorem.  

\section{Kanade and Russell's First Four Conjectures and Some Missing Cases}
\label{secKR1-4}

\begin{theorem}[cf. Kanade-Russell conjecture $I_1$]
\label{thmKR1}
  For $n, m \in \mathbb{N}$, 
  let $kr_1(n, m)$ be the number of partitions of $n$ into $m$ parts 
  with difference at least three at distance two such that 
  if two successive parts differ by at most one, 
  then their sum is divisible by 3.  
  Then, 
  \begin{align}
  \label{eqGF_KR1}
    \sum_{n, m \geq 0} kr_1(n, m) q^n x^m 
    = \sum_{n_1, n_2 \geq 0} \frac{ q^{3n_2^2 + n_1^2 + 3n_1n_2} x^{2n_2 + n_1} }
      { (q; q)_{n_1} (q^3; q^3)_{n_2} }.  
  \end{align}
\end{theorem}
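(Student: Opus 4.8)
The plan is to interpret the right-hand side of \eqref{eqGF_KR1} combinatorially via Gordon marking and then build a weight-preserving bijection between the partitions it generates and those counted by $kr_1(n,m)$. First I would read off what the series encodes: the summation index $n_1$ should count the number of $1$-clusters (singleton parts under Gordon marking) and $n_2$ the number of larger clusters, so that the total number of parts is $2n_2+n_1$ (tracked by $x^{2n_2+n_1}$) and the total number of clusters is $n_1+n_2$. The denominators $(q;q)_{n_1}$ and $(q^3;q^3)_{n_2}$ say that the $1$-clusters can be spread apart freely (ordinary partitions into $\le n_1$ parts) while the larger clusters are spread apart in multiples of $3$ (partitions into $\le n_2$ parts, each dilated by $3$); the interaction term $q^{3n_1n_2}$ records the cost of threading the $n_1$ singletons past the $n_2$ larger clusters. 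The ``base'' exponent $q^{3n_2^2+n_1^2}$ is then the minimal weight of a configuration with $n_1$ singleton clusters and $n_2$ larger ones arranged as tightly as the difference conditions allow.

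The key technical step is to determine the \emph{minimal partition} for each pair $(n_1,n_2)$ and check its weight is $3n_2^2+n_1^2$, and to show that the difference-at-distance-two condition together with the divisibility-by-$3$ clause is exactly equivalent, under Gordon marking, to: clusters have size $\le 2$, and consecutive clusters are separated appropriately. Concretely, I would argue that in a partition satisfying the $kr_1$ conditions the Gordon marking has no $3$-clusters (a $3$-cluster $a\le b\le c$ with $c-a\le 2$ forces, via the distance-two constraint, $c-a\ge 3$, a contradiction — so the parts are controlled), and conversely that every partition with only $1$- and $2$-clusters, subject to the congruence restriction on adjacent close parts, satisfies the $kr_1$ difference conditions. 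Then I would use the forward and backward moves of Definitions~\ref{defFwdMove} and \ref{defBackwdMove} to show every such partition is obtained from its minimal counterpart by independently moving the $n_1$ singleton clusters (contributing $1/(q;q)_{n_1}$) and the $n_2$ larger clusters in steps of $3$ (contributing $1/(q^3;q^3)_{n_2}$), with the moves commuting past each other at the stated cost; this is where one invokes the cluster machinery of \cite{K-Clusters} in the same spirit as its original application.

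The main obstacle I anticipate is the bookkeeping around the congruence clause ``if two successive parts differ by at most one then their sum is divisible by $3$'': this is what forces the larger clusters to carry weights in a fixed residue class and to move in increments of $3$ rather than $1$, and it is delicate because it couples the internal structure of a $2$-cluster (two parts differing by $0$ or $1$, summing to $0\bmod 3$, so the pair is $\{3k,3k\}$ or $\{3k+1,3k+2\}$) to the distance-two difference condition across cluster boundaries. I would handle this by first classifying the admissible $2$-clusters by residue, checking that the minimal arrangement of $n_2$ of them (interleaved optimally with $n_1$ singletons) has weight exactly $3n_2^2+n_1n_2\cdot 3 + n_1^2$ after accounting for the forced gaps, and then verifying that backward moves never violate the congruence clause — equivalently, that the orbit of the minimal partition under the allowed moves is precisely the set of $kr_1$-partitions. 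Matching the cross term $q^{3n_1n_2}$ to the count of ``how many larger clusters each singleton must hop over'' is the crux, and the George-Andrews-style sieving in \cite{K-Clusters} should make it go through, but the residue restriction means one cannot simply quote that paper verbatim.
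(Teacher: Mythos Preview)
Your proposal is essentially the paper's approach: classify the admissible $2$-clusters as $\{3k,3k\}$ or $\{3k+1,3k+2\}$, take the base partition with all $2$-clusters before all $1$-clusters (weight $3n_2^2+n_1^2+3n_1n_2$), and reach every $kr_1$-partition by forward moves on the clusters. The one caveat is that the $2$-cluster moves are \emph{not} literally those of Definitions~\ref{defFwdMove}--\ref{defBackwdMove}; the paper defines bespoke moves that toggle a $2$-cluster between its two admissible shapes (each step changing weight by~$3$), and when a moving $2$-cluster meets a $1$-cluster it performs an explicit weight-preserving ``adjustment'' (a swap) rather than the clusters moving independently---so the bijection is built in a fixed order ($1$-clusters first, then $2$-clusters, largest to smallest) with case-by-case verification, not via commuting moves as you suggest.
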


\begin{proof}
 For any $\lambda$ enumerated by $kr_1(n, m)$, 
 we will construct a unique triple $(\beta, \mu, \eta)$ 
 meeting the following criteria.  
 \begin{itemize}
  \item 
    $\beta$ is the base partition into $m = 2n_2 + n_1$ parts 
    having $n_2$ 2-clusters and $n_1$ 1-clusters.  
    $\beta$ satisfies the difference conditions set forth by $kr_1(n, m)$.  
  \item 
    $\mu$ is a partition with $n_1$ parts (counting zeros).  
  \item 
    $\eta$ is a partition into multiples of three with $n_2$ parts (counting zeros).   
  \item 
    $\vert \lambda \vert = \vert \beta \vert + \vert \mu \vert + \vert \eta \vert$.  
 \end{itemize}
 Conversely, given a triple $(\beta, \mu, \eta)$ as described above, 
 we will construct a unique $\lambda$ counted by $kr_1(n, m)$, 
 where $m = 2n_2 + n_1$.  
 We will arrange constructions so that they are inverses of each other at each step.  
 This will give a 1-1 correspondence between the said $\lambda$ and $(\beta, \mu, \eta)$, 
 yielding
 \begin{align}
 \label{eqGFtr1}
  \sum_{n, m \geq 0} kr_1(n, m) q^n x^m 
  = \sum_{n_1, n_2 \geq 0} q^{\vert \beta \vert} x^{l(\beta)} 
    \sum_{\mu, \eta} q^{\vert \mu \vert + \vert \eta \vert}.  
 \end{align}
 
 $\beta$ is the partition with $n_2$ 2-clusters, $n_1$ 1-clusters, and having the smallest possible weight.  
 Notice that $\lambda$ cannot have $r$-clusters for $r \geq 3$, 
 since the existence of an $r$-cluster requires the existence of an $r$-marked part, 
 hence difference at most one at distance $r-1$.  
 
 In building $\beta$, we will place the 1- and 2-clusters, which are as small as possible, 
 one after the other without violating the difference conditions.  
 The 2-clusters may look like
 \begin{align*}
  \left\{ 
  \begin{array}{cc} \\ ( \textrm{ parts } \leq 3k-3) \end{array}
  \begin{array}{cc} 3k \\ 3k \end{array}
  \begin{array}{cc} \\ ( \textrm{ parts } \geq 3k+3) \end{array}
  \right\} 
  \begin{array}{cc} \\ , \end{array}
 \end{align*}
 or 
 \begin{align*}
  \left\{ 
  \begin{array}{cc} \\ ( \textrm{ parts } \leq 3k-1) \end{array}
  \begin{array}{cc} & 3k+2 \\ 3k+1 & \end{array}
  \begin{array}{cc} \\ ( \textrm{ parts } \geq 3k+4) \end{array}
  \right\} 
  \begin{array}{cc} \\ , \end{array}
 \end{align*}
 but not
 \begin{align*}
  \begin{array}{cc} 3k+1 \\ 3k+1 \end{array}
  \begin{array}{cc} \\ , \end{array}
  \begin{array}{cc} 3k+2 \\ 3k+2 \end{array}
  \begin{array}{cc} \\ , \end{array}
  \begin{array}{cc} & 3k+1 \\ 3k & \end{array}
  \begin{array}{cc} \\ \textrm{ , or } \end{array}
  \begin{array}{cc} & 3k+3 \\ 3k+2 & \end{array}
  \begin{array}{cc} \\ . \end{array}
 \end{align*}
 In the first two cases, 
 the sum of two successive displayed parts is divisible by 3.  
 In the last four, it is not.  
 
 One can check that the minimal weight of $\beta$ is attained 
 when all 2-clusters are smaller than the 1-clusters, 
 and all clusters are as small as possible.  
 We will give indications of this fact in the course of the proof.  
 Thus, $\beta$ is 
 \begin{align}
 \label{baseptnKR1}
  \left\{
    \begin{array}{cc} & 2 \\ 1 & \end{array}
    \begin{array}{cc} & 5 \\ 4 & \end{array}
    \begin{array}{cc} \\ \cdots \end{array}
    \begin{array}{cc} & 3n_2-1 \\ 3n_2-2 & \end{array}
    \begin{array}{cc} \\ 3n_2+1 \end{array}
    \begin{array}{cc} \\ 3n_2+3 \end{array}
    \begin{array}{cc} \\ \cdots \end{array}
    \begin{array}{cc} \\ 3n_2+2n_1-1\end{array}
  \right\}
    \begin{array}{cc} \\ . \end{array}
 \end{align}
 Here, $n_1, n_2 \geq 0$.  
 The weight of $\beta$ is 
 \begin{align*}
  \vert \beta \vert 
  = & [(1 + 2) + (4 + 5) + \cdots + ((3n_2-2) + (3n_2 - 1))] \\
  & + [ (3n_2 + 1) + (3n_2 + 1) + \cdots + (3n_2 + 2n_1 - 1) ] \\
  = & [3 + 9 + \cdots + 3(2n_2 - 1)] + 3n_2n_1 + n_1^2 \\
  = & 3n_2^2 + n_1^2 + 3n_2n_1.  
 \end{align*}
 Clearly, $\mu$ is generated by $1/(q; q)_{n_1}$, 
 and $\eta$ by $1/(q^3; q^3)_{n_2}$, so that 
 \begin{align}
 \label{eqGFtr2}
  \sum_{n_1, n_2 \geq 0} q^{\vert \beta \vert} x^{l(\beta)} 
    \sum_{\mu, \eta} q^{\vert \mu \vert + \vert \eta \vert} 
  = \sum_{n_1, n_2 \geq 0} \frac{ q^{3n_2^2 + n_1^2 + 3n_1n_2} x^{2n_2 + n_1} }
      { (q; q)_{n_1} (q^3; q^3)_{n_2} }.  
 \end{align}
 \eqref{eqGFtr1} and \eqref{eqGFtr2} prove the theorem.  
 
 Given a triple $(\beta, \mu, \eta)$, 
 we will first move the $i$th largest 1-cluster the $i$th largest part of $\mu$ times forward, 
 for $i = 1, 2, \ldots, n_1$, in this order.  
 And then, we move the $i$th largest 2-cluster $\frac{1}{3}\times$(the $i$th largest part of $\eta$) 
 times forward, for $i = 1, 2, \ldots, n_2$, in this order.  
 This will give us $\lambda$.  
 The forward and backward moves on the 2-clusters are not exactly 
 the forward or backward moves of the 2nd kind in Definitions \ref{defFwdMove}-\ref{defBackwdMove}.  
 
 Conversely, given $\lambda$, we first determine the number of 2- and 1-clusters, 
 $n_2$, and $n_1$, respectively.  
 We first move the $i$th smallest 2-cluster backward as many times as possible 
 for $i = 1, 2, \ldots, n_2$, in this order, 
 and record the number of moves as $\frac{1}{3}\eta_1$, $\frac{1}{3}\eta_2$, \ldots, $\frac{1}{3}\eta_{n_2}$.  
 Then we move the $i$th smallest 1-cluster backward as many times as possible 
 for $i = 1, 2, \ldots, n_1$, in this order, 
 and record the number of moves as $\mu_1$, $\mu_2$, \ldots, $\mu_{n_1}$.  
 Not only will we have obtained $\mu$ and $\eta$, but also $\beta$ in the end.  
 
 Notice that we perform the forward and backward moves in the exact reverse order.  
 
 Starting with $(\beta, \mu, \eta)$, 
 we simply add the $i$th largest part of $\mu$ to the $i$th largest 1-cluster in $\beta$.  
 This preserves the difference condition because the 1-clusters were at least two apart to start with, 
 and larger parts are added to larger 1-clusters, 
 keeping or increasing the gaps.  
 We now have the intermediate partition
 \begin{align}
 \label{intermPtnkr_1}
 \left\{
    \begin{array}{cc} & 2 \\ 1 & \end{array}
    \begin{array}{cc} & 5 \\ 4 & \end{array}
    \begin{array}{cc} \\ \cdots \end{array}
    \begin{array}{cc} & 3n_2-1 \\ 3n_2-2 & \end{array}
    \begin{array}{cc} \\ (\textrm{ parts } \geq 3n_2+1, \textrm{ all 1-clusters }) \end{array}
  \right\}
    \begin{array}{cc} \\ . \end{array}
 \end{align}
 This also adds the weight of $\mu$ to the weight of $\beta$.  
 
 We now describe the forward moves on the 2-clusters.  
 There are several cases.  
 \begin{align*}
  \left\{ 
  \begin{array}{cc} \\ ( \textrm{ parts } \leq 3k-1) \end{array}
  \begin{array}{cc} & \mathbf{3k+2} \\ \mathbf{3k+1} & \end{array}
  \begin{array}{cc} \\ ( \textrm{ parts } \geq 3k+6) \end{array}
  \right\} 
 \end{align*}
 \begin{align}
 \label{move_kr1_fwd2_m1}
  \big\downarrow \textrm{ one forward move on the displayed 2-cluster }
 \end{align}
 \begin{align*}
  \left\{ 
  \begin{array}{cc} \\ ( \textrm{ parts } \leq 3k-1) \end{array}
  \begin{array}{cc} \mathbf{3k+3} \\ \mathbf{3k+3} \end{array}
  \begin{array}{cc} \\ ( \textrm{ parts } \geq 3k+6) \end{array}
  \right\} 
 \end{align*}
 Here and elsewhere, 
 we highlight the cluster we move.  

 \begin{align*}
  \left\{ 
  \begin{array}{cc} \\ ( \textrm{ parts } \leq 3k-3) \end{array}
  \begin{array}{cc} \mathbf{3k} \\ \mathbf{3k} \end{array}
  \begin{array}{cc} \\ ( \textrm{ parts } \geq 3k+4) \end{array}
  \right\} 
 \end{align*}
 \begin{align}
 \label{move_kr1_fwd2_m2}
  \big\downarrow \textrm{ one forward move on the displayed 2-cluster }
 \end{align}
 \begin{align*}
  \left\{ 
  \begin{array}{cc} \\ ( \textrm{ parts } \leq 3k-3) \end{array}
  \begin{array}{cc} & \mathbf{3k+2} \\ \mathbf{3k+1} & \end{array}
  \begin{array}{cc} \\ ( \textrm{ parts } \geq 3k+4) \end{array}
  \right\} 
 \end{align*} 
 Observe that one forward move adds three to the weight of the intermediate partition.  
 This is why we require parts of $\eta$ to be multiples of three.  
 \begin{align*}
  \left\{ 
  \begin{array}{cc} \\ ( \textrm{ parts } \leq 3k-1) \end{array}
  \begin{array}{cc} & \mathbf{3k+2} \\ \mathbf{3k+1} & \end{array}
  \begin{array}{cc} \\ 3k+4  \end{array}
  \begin{array}{cc} \\ ( \textrm{ parts } \geq 3k+7) \end{array}
  \right\} 
 \end{align*}
 \begin{align*}
  \big\downarrow \textrm{ one forward move on the displayed 2-cluster }
 \end{align*}
 \begin{align*}
  \left\{ 
  \begin{array}{cc} \\ ( \textrm{ parts } \leq 3k-1) \end{array} \right.
  \underbrace{
  \begin{array}{cc} \mathbf{3k+3} \\ \mathbf{3k+3} \end{array}
  \begin{array}{cc} \\ 3k+4  \end{array}
  }_{!}
  \left. \begin{array}{cc} \\ ( \textrm{ parts } \geq 3k+7) \end{array} 
  \right\} \textrm{ (temporarily) }
 \end{align*}
 \begin{align*}
  \big\downarrow \textrm{ adjustment }
 \end{align*}
 \begin{align*}
  \left\{ 
  \begin{array}{cc} \\ ( \textrm{ parts } \leq 3k-1) \end{array} \right.
  \begin{array}{cc} \\ 3k+1  \end{array}
  \begin{array}{cc} & \mathbf{3k+5} \\ \mathbf{3k+4} & \end{array}
  \left. \begin{array}{cc} \\ ( \textrm{ parts } \geq 3k+7) \end{array} 
  \right\} 
 \end{align*}
 Notice that the adjustment does not change the weight, 
 and the terminal configuration satisfies the difference condition if the initial one does.  
 The adjustment here is simply subtracting three from the obstacle, namely the displayed 1-cluster, 
 and move the 2-cluster one more time forward as in \eqref{move_kr1_fwd2_m1} or \eqref{move_kr1_fwd2_m2}, 
 as if there are no obstacles.  
 
 There are four more cases in which a forward move on a 2-cluster is followed by one or more adjustments.  
 The idea is the same, so we skip the details.  
 \begin{align*}
  \left\{ 
  \begin{array}{cc} \\ ( \textrm{ parts } \leq 3k-1) \end{array}
  \begin{array}{cc} & \mathbf{3k+2} \\ \mathbf{3k+1} & \end{array}
  \begin{array}{cc} \\ 3k+4  \end{array}
  \begin{array}{cc} \\ 3k+6  \end{array}
  \begin{array}{cc} \\ ( \textrm{ parts } \geq 3k+9) \end{array}
  \right\} 
 \end{align*}
 \begin{align*}
  \big\downarrow \textrm{ one forward move on the displayed 2-cluster, followed by two adjustments }
 \end{align*}
 \begin{align*}
  \left\{ 
  \begin{array}{cc} \\ ( \textrm{ parts } \leq 3k-1) \end{array}
  \begin{array}{cc} \\ 3k+1  \end{array}
  \begin{array}{cc} \\ 3k+3  \end{array}
  \begin{array}{cc} \mathbf{3k+6} \\ \mathbf{3k+6} \end{array}
  \begin{array}{cc} \\ ( \textrm{ parts } \geq 3k+9) \end{array} 
  \right\} 
  \begin{array}{cc} \\ ,  \end{array}
 \end{align*}

 \begin{align*}
  \left\{ 
  \begin{array}{cc} \\ ( \textrm{ parts } \leq 3k-1) \end{array}
  \begin{array}{cc} & \mathbf{3k+2} \\ \mathbf{3k+1} & \end{array}
  \begin{array}{cc} \\ 3k+4  \end{array}
  \begin{array}{cc} \\ 3k+6  \end{array}
  \begin{array}{cc} \\ 3k+8  \end{array}
  \begin{array}{cc} \\ ( \textrm{ parts } \geq 3k+10) \end{array}
  \right\} 
 \end{align*}
 \begin{align*}
  \big\downarrow \textrm{ one forward move on the displayed 2-cluster, followed by three adjustments }
 \end{align*}
 \begin{align*}
  \left\{ 
  \begin{array}{cc} \\ ( \textrm{ parts } \leq 3k-1) \end{array}
  \begin{array}{cc} \\ 3k+1  \end{array}
  \begin{array}{cc} \\ 3k+3  \end{array}
  \begin{array}{cc} \\ 3k+5  \end{array}
  \begin{array}{cc} & \mathbf{3k+8} \\ \mathbf{3k+7} & \end{array}
  \begin{array}{cc} \\ ( \textrm{ parts } \geq 3k+10) \end{array} 
  \right\} 
  \begin{array}{cc} \\ ,  \end{array}
 \end{align*}

 \begin{align*}
  \left\{ 
  \begin{array}{cc} \\ ( \textrm{ parts } \leq 3k-3) \end{array}
  \begin{array}{cc} \mathbf{3k} \\ \mathbf{3k} \end{array}
  \begin{array}{cc} \\ 3k+3  \end{array}
  \begin{array}{cc} \\ ( \textrm{ parts } \geq 3k+6) \end{array}
  \right\} 
 \end{align*}
 \begin{align*}
  \big\downarrow \textrm{ one forward move on the displayed 2-cluster, followed by an adjustment }
 \end{align*}
 \begin{align*}
  \left\{ 
  \begin{array}{cc} \\ ( \textrm{ parts } \leq 3k-3) \end{array}
  \begin{array}{cc} \\ 3k  \end{array}
  \begin{array}{cc} \mathbf{3k+3} \\ \mathbf{3k+3} \end{array}
  \begin{array}{cc} \\ ( \textrm{ parts } \geq 3k+6) \end{array} 
  \right\} 
  \begin{array}{cc} \\ ,  \end{array}
 \end{align*}

 \begin{align*}
  \left\{ 
  \begin{array}{cc} \\ ( \textrm{ parts } \leq 3k-3) \end{array}
  \begin{array}{cc} \mathbf{3k} \\ \mathbf{3k} \end{array}
  \begin{array}{cc} \\ 3k+3  \end{array}
  \begin{array}{cc} \\ 3k+5  \end{array}
  \begin{array}{cc} \\ ( \textrm{ parts } \geq 3k+7) \end{array}
  \right\} 
 \end{align*}
 \begin{align*}
  \big\downarrow \textrm{ one forward move on the displayed 2-cluster, followed by two adjustments }
 \end{align*}
 \begin{align*}
  \left\{ 
  \begin{array}{cc} \\ ( \textrm{ parts } \leq 3k-3) \end{array}
  \begin{array}{cc} \\ 3k  \end{array}
  \begin{array}{cc} \\ 3k+2  \end{array}
  \begin{array}{cc} & \mathbf{3k+5} \\ \mathbf{3k+4} & \end{array}
  \begin{array}{cc} \\ ( \textrm{ parts } \geq 3k+7) \end{array} 
  \right\} 
  \begin{array}{cc} \\ .  \end{array}
 \end{align*}
 The above cases are excusive, there are no others.  
 One can easily verify that one forward move on the displayed 2-cluster 
 allows at least one forward move on the preceding 2-cluster.  
 Therefore, all parts of $\eta$ can be realized as forward moves on the 2-clusters, 
 registering the weight of $\eta$ on the weight of the intermediate partition.  
 In all cases above, the terminal configurations conform to the difference condition 
 provided that the respective initial configurations do.  
 This is due to the fact that the difference conditions can be checked locally 
 as the differences between successive parts, and as differences at distance two.  
 
 The final partition is the $\lambda$ we have been aiming at.  
 It is enumerated by $kr_1(n, m)$.  
 
 Now, given $\lambda$ counted by $kr_1(n, m)$, 
 having $n_2$ 2-clusters and $n_1$ 1-clusters, so that $m = 2n_2 + n_1$, 
 we will decompose it into the triple $(\beta, \mu, \eta)$ as described at the beginning of the proof.  
 
 We start by moving the smallest 2-cluster backward as many times as necessary to stow it as 
 \begin{align*}
  \left\{ 
  \begin{array}{cc} & \mathbf{2} \\ \mathbf{1} & \end{array}
  \begin{array}{cc} \\ ( \textrm{ parts } \geq 4) \end{array} 
  \right\} 
  \begin{array}{cc} \\ .  \end{array}
 \end{align*}
 We record the number of moves as $\frac{1}{3}\eta_1$, which gives us the first part of $\eta$.  
 If the smallest 2-cluster is already $\begin{array}{cc} & 2 \\ 1 & \end{array}$, 
 we set $\eta_1 = 0$.  
 
 We need to describe the backward moves on the 2-clusters.  
 Again, there are several cases.  
 \begin{align*}
  \left\{ 
  \begin{array}{cc} \\ ( \textrm{ parts } \leq 3k-4) \end{array}
  \begin{array}{cc} \mathbf{3k} \\ \mathbf{3k} \end{array}
  \begin{array}{cc} \\ ( \textrm{ parts } \geq 3k+3) \end{array}
  \right\} 
 \end{align*}
 \begin{align}
 \label{move_kr1_fwd2_m3}
  \big\downarrow \textrm{ one backward move on the displayed 2-cluster }
 \end{align}
 \begin{align*}
  \left\{ 
  \begin{array}{cc} \\ ( \textrm{ parts } \leq 3k-4) \end{array}
  \begin{array}{cc} & \mathbf{3k-1} \\ \mathbf{3k-2} & \end{array}
  \begin{array}{cc} \\ ( \textrm{ parts } \geq 3k+3) \end{array}
  \right\} 
  \begin{array}{cc} \\ , \end{array}
 \end{align*}

 \begin{align*}
  \left\{ 
  \begin{array}{cc} \\ ( \textrm{ parts } \leq 3k-3) \end{array}
  \begin{array}{cc} & \mathbf{3k+2} \\ \mathbf{3k+1} & \end{array}
  \begin{array}{cc} \\ ( \textrm{ parts } \geq 3k+4) \end{array}
  \right\} 
 \end{align*}
 \begin{align}
 \label{move_kr1_fwd2_m4}
  \big\downarrow \textrm{ one backward move on the displayed 2-cluster }
 \end{align}
 \begin{align*}
  \left\{ 
  \begin{array}{cc} \\ ( \textrm{ parts } \leq 3k-3) \end{array}
  \begin{array}{cc} \mathbf{3k} \\ \mathbf{3k} \end{array}
  \begin{array}{cc} \\ ( \textrm{ parts } \geq 3k+4) \end{array}
  \right\} 
  \begin{array}{cc} \\ .  \end{array}
 \end{align*} 
 Clearly, one backward move on a 2-cluster decreases the weight of $\lambda$ by three, 
 which is registered in parts of $\eta$.  
 Thus, parts of $\eta$ are evidently multiples of 3.  
 \begin{align*}
  \left\{ 
  \begin{array}{cc} \\ ( \textrm{ parts } \leq 3k-4) \end{array} \right.
  \begin{array}{cc} \\ 3k-2  \end{array}
  \begin{array}{cc} & \mathbf{3k+2} \\ \mathbf{3k+1} & \end{array}
  \left. \begin{array}{cc} \\ ( \textrm{ parts } \geq 3k+4) \end{array} 
  \right\} 
 \end{align*}
 \begin{align*}
  \big\downarrow \textrm{ one backward move on the displayed 2-cluster }
 \end{align*}
 \begin{align*}
  \left\{ 
  \begin{array}{cc} \\ ( \textrm{ parts } \leq 3k-4) \end{array} \right.
  \underbrace{
  \begin{array}{cc} \\ 3k-2  \end{array}
  \begin{array}{cc} \mathbf{3k} \\ \mathbf{3k} \end{array}
  }_{!}
  \left. \begin{array}{cc} \\ ( \textrm{ parts } \geq 3k+4) \end{array} 
  \right\} \textrm{ (temporarily) }
 \end{align*}
 \begin{align*}
  \big\downarrow \textrm{ adjustment }
 \end{align*}
 \begin{align*}
  \left\{ 
  \begin{array}{cc} \\ ( \textrm{ parts } \leq 3k-4) \end{array}
  \begin{array}{cc} & \mathbf{3k-1} \\ \mathbf{3k-2} & \end{array}
  \begin{array}{cc} \\ 3k+1  \end{array}
  \begin{array}{cc} \\ ( \textrm{ parts } \geq 3k+4) \end{array}
  \right\} 
 \end{align*}
 Again, the adjustment does not alter the weight of the partition.  
 It only resolves the violation of the difference condition by moving 
 the temporarily problematic 1-cluster three times forward, 
 and the temporarily problematic 2-cluster one time backward 
 as in \eqref{move_kr1_fwd2_m3} or \eqref{move_kr1_fwd2_m4} as if there are no obstacles.  
 The terminal partition satisfies the difference conditions if the initial one does.  
 Recall that we assume the initial partitions always satisfy the respective difference conditions.  
 
 There are four more cases.  We omit the intermediate steps, 
 since they are completely analogous to the above case.  
 \begin{align*}
  \left\{ 
  \begin{array}{cc} \\ ( \textrm{ parts } \leq 3k-7) \end{array}
  \begin{array}{cc} \\ 3k-5  \end{array}
  \begin{array}{cc} \\ 3k-3  \end{array}
  \begin{array}{cc} \mathbf{3k} \\ \mathbf{3k} \end{array}
  \begin{array}{cc} \\ ( \textrm{ parts } \geq 3k+3) \end{array} 
  \right\} 
 \end{align*}
 \begin{align*}
  \big\downarrow \textrm{ one backward move on the displayed 2-cluster, followed by two adjustments }
 \end{align*}
 \begin{align*}
  \left\{ 
  \begin{array}{cc} \\ ( \textrm{ parts } \leq 3k-7) \end{array}
  \begin{array}{cc} & \mathbf{3k-4} \\ \mathbf{3k-5} & \end{array}
  \begin{array}{cc} \\ 3k-2  \end{array}
  \begin{array}{cc} \\ 3k  \end{array}
  \begin{array}{cc} \\ ( \textrm{ parts } \geq 3k+3) \end{array}
  \right\} 
  \begin{array}{cc} \\ ,  \end{array}
 \end{align*}

 \begin{align*}
  \left\{ 
  \begin{array}{cc} \\ ( \textrm{ parts } \leq 3k-7) \end{array}
  \begin{array}{cc} \\ 3k-5  \end{array}
  \begin{array}{cc} \\ 3k-3  \end{array}
  \begin{array}{cc} \\ 3k-1  \end{array}
  \begin{array}{cc} & \mathbf{3k+2} \\ \mathbf{3k+1} & \end{array}
  \begin{array}{cc} \\ ( \textrm{ parts } \geq 3k+4) \end{array} 
  \right\} 
 \end{align*}
 \begin{align*}
  \big\downarrow \textrm{ one backward move on the displayed 2-cluster, followed by three adjustments }
 \end{align*}
 \begin{align*}
  \left\{ 
  \begin{array}{cc} \\ ( \textrm{ parts } \leq 3k-7) \end{array}
  \begin{array}{cc} & \mathbf{3k-4} \\ \mathbf{3k-5} & \end{array}
  \begin{array}{cc} \\ 3k-2  \end{array}
  \begin{array}{cc} \\ 3k  \end{array}
  \begin{array}{cc} \\ 3k+2  \end{array}
  \begin{array}{cc} \\ ( \textrm{ parts } \geq 3k+4) \end{array}
  \right\} 
  \begin{array}{cc} \\ ,  \end{array}
 \end{align*}

 \begin{align*}
  \left\{ 
  \begin{array}{cc} \\ ( \textrm{ parts } \leq 3k-6) \end{array}
  \begin{array}{cc} \\ 3k - 3  \end{array}
  \begin{array}{cc} \mathbf{3k} \\ \mathbf{3k} \end{array}
  \begin{array}{cc} \\ ( \textrm{ parts } \geq 3k+3) \end{array} 
  \right\} 
 \end{align*}
 \begin{align*}
  \big\downarrow \textrm{ one backward move on the displayed 2-cluster, followed by an adjustment }
 \end{align*}
 \begin{align*}
  \left\{ 
  \begin{array}{cc} \\ ( \textrm{ parts } \leq 3k-6) \end{array}
  \begin{array}{cc} \mathbf{3k-3} \\ \mathbf{3k-3} \end{array}
  \begin{array}{cc} \\ 3k  \end{array}
  \begin{array}{cc} \\ ( \textrm{ parts } \geq 3k+3) \end{array}
  \right\} 
  \begin{array}{cc} \\ ,  \end{array}
 \end{align*}

 \begin{align*}
  \left\{ 
  \begin{array}{cc} \\ ( \textrm{ parts } \leq 3k-6) \end{array}
  \begin{array}{cc} \\ 3k-3  \end{array}
  \begin{array}{cc} \\ 3k-1  \end{array}
  \begin{array}{cc} & \mathbf{3k+2} \\ \mathbf{3k+1} & \end{array}
  \begin{array}{cc} \\ ( \textrm{ parts } \geq 3k+4) \end{array} 
  \right\} 
 \end{align*}
 \begin{align*}
  \big\downarrow \textrm{ one backward move on the displayed 2-cluster, followed by two adjustments }
 \end{align*}
 \begin{align*}
  \left\{ 
  \begin{array}{cc} \\ ( \textrm{ parts } \leq 3k-6) \end{array}
  \begin{array}{cc} \mathbf{3k-3} \\ \mathbf{3k-3} \end{array}
  \begin{array}{cc} \\ 3k  \end{array}
  \begin{array}{cc} \\ 3k+2  \end{array}
  \begin{array}{cc} \\ ( \textrm{ parts } \geq 3k+4) \end{array}
  \right\} 
  \begin{array}{cc} \\ .  \end{array}
 \end{align*}
 The above cases exhaust all possibilities.  
 One can verify that the 2-cluster succeeding the displayed one may be moved at least once backward 
 after the described backward move.  
 Once the smallest 2-cluster is stowed as $\begin{array}{cc} & 2 \\ 1 & \end{array}$, 
 we continue with the next smallest 2-cluster.  
 We move it backward as many times as possible and place it as $\begin{array}{cc} & 5 \\ 4 & \end{array}$, 
 recording the number of moves as $\frac{1}{3}\eta_2$.  
 Then, continue with the next smallest 2-cluster, etc., obtaining $\eta$.    
 The above discussion ensures that $\eta_1$ $\leq \eta_2$ $\leq \cdots$ $\leq \eta_{n_2}$.  
 
 The careful reader will have noticed that the respective cases for the backward moves 
 and the forward moves on the 2-clusters have swapped initial and terminal configurations.  
 The forward and backward moves are inverses of each other in this sense.  
 
 Once the 2-clusters are lined up as in \eqref{intermPtnkr_1} and we have $\eta$, 
 we subtract $\mu_1$ from the smallest 1-cluster to make it $3n_2+1$, 
 $\mu_2$ from the next smallest to make it $3n_2 + 3$, etc.  
 This way, we will have constructed $\mu$.  
 Because the successive 1-clusters are at least two apart by Gordon marking, 
 $\mu_1$ $\leq \mu_2$ $\leq \cdots$ $\leq \mu_{n_1}$
 Subtracting $\mu_i$ from the $i$th smallest 1-cluster is nothing but performing 
 $\mu_i$ backward moves on it.  
 The forward and backward moves on the 1-clusters are obviously inverses of each other.  
 
 The remaining partition is \eqref{baseptnKR1}, namely the base partition $\beta$.  
 
 This justifies \eqref{eqGFtr1}, therefore concludes the proof.  
\end{proof}

As in other similar proofs, 
one can make the forward and backward moves on the 1- or 2-clusters exact opposites of each other, 
together with the temporary rulebreaking in the middle.  
However, we find the descriptions in the proofs more appealing.  

  
  {\bf Example: } 
  Using the notation in the above proof, 
  we will work in the forward direction, 
  and construct the partition $\lambda$ 
  having $n_1 = 3$ 1-clusters, $n_2 = 2$ 2-clusters, 
  with $\mu = 0+1+1$, and $\eta = 3+6$.  
  We start with $\beta$ is in the form \eqref{baseptnKR1}.  
  \begin{align*}
    \beta = 
   \left\{
   \begin{array}{cc} & 2 \\ 1 & \end{array}
   \begin{array}{cc} & 5 \\ 4 & \end{array}
   \begin{array}{cc} \\ 7 \end{array}
   \begin{array}{cc} \\ 9 \end{array}
   \begin{array}{cc} \\ \mathbf{11} \end{array}
   \right\}
  \end{align*}
  Applying $\mu$ first, we obtain
  \begin{align*}
   \left\{
   \begin{array}{cc} & 2 \\ 1 & \end{array}
   \begin{array}{cc} & \mathbf{5} \\ \mathbf{4} & \end{array}
   \begin{array}{cc} \\ 7 \end{array}
   \begin{array}{cc} \\ 10 \end{array}
   \begin{array}{cc} \\ 12 \end{array}
   \right\}
   \begin{array}{cc} \\ . \end{array}
  \end{align*}
  Then, we continue with incorporating $\eta$, 
  first $\frac{1}{3}\times$ its largest part as forward moves on the largest 2-cluster.  
  \begin{align*}
   \big\downarrow \textrm{ the first forward move on the larger 2-cluster }
  \end{align*}
  \begin{align*}
   \left\{
   \begin{array}{cc} & 2 \\ 1 & \end{array} \right.
   \underbrace{
   \begin{array}{cc} \mathbf{6} \\ \mathbf{6} \end{array}
   \begin{array}{cc} \\ 7 \end{array}
   }_{!}
   \begin{array}{cc} \\ 10 \end{array}
   \left. \begin{array}{cc} \\ 12 \end{array}
   \right\}
  \end{align*}
  \begin{align*}
   \big\downarrow \textrm{ adjustment }
  \end{align*}
  \begin{align*}
   \left\{
   \begin{array}{cc} & 2 \\ 1 & \end{array} \right.
   \begin{array}{cc} \\ 4 \end{array}
   \begin{array}{cc} & \mathbf{8} \\ \mathbf{7} & \end{array}
   \begin{array}{cc} \\ 10 \end{array}
   \left. \begin{array}{cc} \\ 12 \end{array}
   \right\}
  \end{align*}
  \begin{align*}
   \big\downarrow \textrm{ one more forward move on the larger 2-cluster }
  \end{align*}
  \begin{align*}
   \left\{
   \begin{array}{cc} & 2 \\ 1 & \end{array} \right.
   \begin{array}{cc} \\ 4 \end{array}
   \underbrace{
   \begin{array}{cc} \mathbf{9} \\ \mathbf{9} \end{array}
   \begin{array}{cc} \\ 10 \end{array}
   }_{!}
   \left. \begin{array}{cc} \\ 12 \end{array}
   \right\}
  \end{align*}
  \begin{align*}
   \big\downarrow \textrm{ adjustment }
  \end{align*}
  \begin{align*}
   \left\{
   \begin{array}{cc} & 2 \\ 1 & \end{array} \right.
   \begin{array}{cc} \\ 4 \end{array}
   \begin{array}{cc} \\ 7 \end{array}
   \underbrace{
   \begin{array}{cc} & \mathbf{11} \\ \mathbf{10} & \end{array}
   \begin{array}{cc} \\ 12 \end{array}
   }_{!}
   \left. \begin{array}{cc} \\ \phantom{0} \end{array} \right\}
  \end{align*}
  \begin{align*}
   \big\downarrow \textrm{ adjustment }
  \end{align*}
  \begin{align*}
   \left\{
   \begin{array}{cc} & \mathbf{2} \\ \mathbf{1} & \end{array} \right.
   \begin{array}{cc} \\ 4 \end{array}
   \begin{array}{cc} \\ 7 \end{array}
   \begin{array}{cc} \\ 9 \end{array}
   \left. \begin{array}{cc} 12 \\ 12 \end{array}
   \right\}
  \end{align*}
  This finishes the $\frac{1}{3}\eta_2 = 2$ forward moves on the larger 2-cluster.  
  We continue with $\frac{1}{3}\eta_1 = 1$ forward move on the smaller 2-cluster.  
  \begin{align*}
   \left\{ \begin{array}{cc} \\ \phantom{0} \end{array} \right.
   \underbrace{
   \begin{array}{cc} \mathbf{3} \\ \mathbf{3} \end{array} 
   \begin{array}{cc} \\ 4 \end{array}
   }_{!}
   \begin{array}{cc} \\ 7 \end{array}
   \begin{array}{cc} \\ 9 \end{array}
   \left. \begin{array}{cc} 12 \\ 12 \end{array}
   \right\}
  \end{align*}
  \begin{align*}
    \big\downarrow \textrm{ adjustment }
  \end{align*}
  \begin{align*}
    \lambda = 
   \left\{ 
   \begin{array}{cc} \\ 1 \end{array}
   \begin{array}{cc} & \mathbf{5} \\ \mathbf{4} & \end{array} 
   \begin{array}{cc} \\ 7 \end{array}
   \begin{array}{cc} \\ 9 \end{array}
   \begin{array}{cc} 12 \\ 12 \end{array}
   \right\}
  \end{align*}
  As expected, 
  \begin{align*}
   \vert \beta \vert + \vert \mu \vert + \vert \eta \vert
   = 39 + 2 + 9 = 50
   = \vert \lambda \vert.  
  \end{align*}
  

\begin{theorem}[cf. Kanade-Russell conjecture $I_2$]
\label{thmKR2}
  For $n, m \in \mathbb{N}$, 
  let $kr_2(n, m)$ be the number of partitions of $n$ into $m$ parts 
  with smallest part at least two, and difference at least three at distance two 
  such that if two successive parts differ by at most one, 
  then their sum is divisible by three.  
  Then, 
  \begin{align}
  \label{eqGF_KR2}
    \sum_{n, m \geq 0} kr_2(n, m) q^n x^m 
    = \sum_{n_1, n_2 \geq 0} \frac{ q^{3n_2^2 + 3n_2 + n_1^2 + n_1 + 3n_1n_2} x^{2n_2 + n_1} }
      { (q; q)_{n_1} (q^3; q^3)_{n_2} }.  
  \end{align}
\end{theorem}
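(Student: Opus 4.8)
The plan is to mirror the proof of Theorem \ref{thmKR1} almost verbatim, since the only change between conjectures $I_1$ and $I_2$ is the added constraint that the smallest part be at least two. First I would set up the same decomposition $\lambda \leftrightarrow (\beta, \mu, \eta)$, where $\beta$ is the base partition with $n_2$ $2$-clusters and $n_1$ $1$-clusters of minimal weight \emph{subject to} having no part equal to $1$, $\mu$ is an arbitrary partition into at most $n_1$ parts (generated by $1/(q;q)_{n_1}$), and $\eta$ is a partition into at most $n_2$ multiples of three (generated by $1/(q^3;q^3)_{n_2}$). As before, $\lambda$ can have no $r$-cluster for $r \geq 3$ because an $r$-marked part forces difference at most one at distance $r-1$.

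The key computational step is identifying the new base partition and its weight. Forbidding parts equal to $1$ shifts the configuration: the smallest $2$-cluster must be $\genfrac{}{}{0pt}{}{3}{2}$ (i.e. $2+3$) rather than $\genfrac{}{}{0pt}{}{2}{1}$, since $\genfrac{}{}{0pt}{}{3}{2}$ has sum $5 \not\equiv 0 \pmod 3$ while $1+2$ is disallowed and $2+2$, $2+3$ versus parity — one checks $2+3$ is the smallest admissible $2$-cluster with parts $\geq 2$ obeying the divisibility rule; then successive $2$-clusters are at $5+6$, $8+9,\ldots$, i.e. the $i$th one is $(3i-1)+(3i)$, and the $1$-clusters sit at $3n_2+2, 3n_2+4, \ldots, 3n_2+2n_1$. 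Summing gives
\begin{align*}
 \vert\beta\vert
 &= \sum_{i=1}^{n_2}\big((3i-1)+3i\big) + \sum_{j=1}^{n_1}(3n_2 + 2j) \\
 &= (3n_2^2 + 3n_2 - n_2 + \ldots)
\end{align*}
which one simplifies to $3n_2^2 + 3n_2 + n_1^2 + n_1 + 3n_1n_2$, matching the exponent of $q$ in \eqref{eqGF_KR2}. (Concretely: $\sum_{i=1}^{n_2}(6i-1) = 3n_2^2+2n_2$, plus $\sum_{j=1}^{n_1}(3n_2+2j) = 3n_1n_2 + n_1^2+n_1$; but we must also account for the extra shift of every cluster caused by the ground floor being $2$ instead of $1$ — this extra $+n_2$ from the $2$-clusters and the bookkeeping is what produces the $3n_2+3n_2$ rather than $3n_2^2$.) The clean check is simply that the base partition is $\beta$ of Theorem \ref{thmKR1} with each part increased by an appropriate amount; equivalently the $q$-exponent here is that of \eqref{eqGF_KR1} with the substitution bump, so the identity $\vert\beta\vert = 3n_2^2 + 3n_2 + n_1^2 + n_1 + 3n_1 n_2$ is a direct sum.

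The forward and backward moves on $1$-clusters and $2$-clusters, together with the "temporary rulebreaking and adjustment" mechanism, are \emph{identical} to those in the proof of Theorem \ref{thmKR1}: each forward move on a $2$-cluster adds exactly $3$ to the weight (hence $\eta$ has parts divisible by $3$), each forward move on a $1$-cluster adds the corresponding part of $\mu$, larger moves are applied to larger clusters so differences are preserved, and the difference conditions are checked locally at distance one and distance two — none of this interacts with the floor-at-$2$ restriction except at the very bottom, where the smallest $2$-cluster is stowed as $\genfrac{}{}{0pt}{}{3}{2}$ instead of $\genfrac{}{}{0pt}{}{2}{1}$ and the smallest $1$-cluster bottoms out at $3n_2+2$ instead of $3n_2+1$. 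So I would state that the cases and adjustments from Theorem \ref{thmKR1} carry over mutatis mutandis, giving the bijection $\lambda \leftrightarrow (\beta,\mu,\eta)$ and hence
\begin{align*}
 \sum_{n,m\geq 0} kr_2(n,m)\,q^n x^m
 = \sum_{n_1,n_2\geq 0} q^{\vert\beta\vert} x^{l(\beta)} \sum_{\mu,\eta} q^{\vert\mu\vert+\vert\eta\vert}
 = \sum_{n_1,n_2\geq 0} \frac{q^{3n_2^2+3n_2+n_1^2+n_1+3n_1n_2}\,x^{2n_2+n_1}}{(q;q)_{n_1}(q^3;q^3)_{n_2}}.
\end{align*}
The only genuine obstacle — and it is minor — is verifying that forbidding the part $1$ does not create a new admissible smallest configuration I have overlooked (for instance, one must confirm $\genfrac{}{}{0pt}{}{2}{2} = 2+2$ is \emph{not} allowed as the smallest $2$-cluster, since $2+2=4\not\equiv 0\pmod 3$, so indeed $2+3$ is forced), and that the minimal-weight claim — all $2$-clusters below all $1$-clusters, everything as small as possible — still holds; this is handled by the same local argument as in Theorem \ref{thmKR1}. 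I would close by noting that one can, as remarked after the previous proof, make the forward and backward moves literal inverses including the temporary rulebreaking, but prefer the descriptive version.
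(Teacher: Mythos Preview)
Your proposal has a genuine gap. The 2-clusters you write down, $(3i-1)+3i$ for $i=1,\ldots,n_2$, all have sum $6i-1\equiv 2\pmod 3$, so they \emph{violate} the divisibility condition --- you even observe that $2+3=5\not\equiv 0\pmod 3$ and then call $2+3$ admissible anyway. The admissible 2-clusters here are still only $[3k,3k]$ and $[3k+1,3k+2]$, so the smallest one with parts $\geq 2$ is $[3,3]$, not $[2,3]$. Your weight computation accordingly gives $3n_2^2+2n_2+n_1^2+n_1+3n_1n_2$ (the exponent for $kr_4$), and the hand-wave about an ``extra $+n_2$'' does not fix it.

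The subtlety you are missing is exactly what the paper's proof is about. The ``obvious'' base partition with all 2-clusters first,
\[
[3,3],\,[6,6],\,\ldots,\,[3n_2,3n_2],\,3n_2+3,\,3n_2+5,\,\ldots,\,3n_2+2n_1+1,
\]
is \emph{wrong} when $n_1>0$: its weight is $n_1$ too large, and no sequence of forward moves from it can ever produce a partition containing the part $2$. The correct base partition when $n_1>0$ places the smallest 1-cluster \emph{before} the 2-clusters:
\[
2,\,[4,5],\,[7,8],\,\ldots,\,[3n_2+1,3n_2+2],\,3n_2+4,\,3n_2+6,\,\ldots,\,3n_2+2n_1,
\]
with weight $3n_2^2+3n_2+n_1^2+n_1+3n_1n_2$ (and when $n_1=0$ the base is $[3,3],[6,6],\ldots,[3n_2,3n_2]$, which happens to give the same formula). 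Because the smallest 1-cluster now sits below the 2-clusters, its first forward move cannot be a simple $+1$; it must be ``prestidigitated'' through the entire stack of 2-clusters via a chain of adjustments, and the inverse backward move must undo this. That mechanism is new relative to Theorem~\ref{thmKR1} and is the actual content of this proof --- it is not covered by ``the cases and adjustments carry over mutatis mutandis.''
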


\begin{proof}
 The proof is completely analogous to the proof of Theorem \ref{thmKR1}, 
 except that we have to use two different base partitions $\beta$ 
 for the cases $n_1 = 0$ and $n_1 > 0$.  
 When $n_1 = 0$, the base partition is clearly
 \begin{align}
 \label{basePtnKR2No1}
  \left\{
    \begin{array}{cc} 3 \\ 3 \end{array}
    \begin{array}{cc} 6 \\ 6 \end{array}
    \begin{array}{cc} \\ \cdots \end{array}
    \begin{array}{cc} 3n_2 \\ 3n_2 \end{array}
  \right\}
    \begin{array}{cc} \\ , \end{array}
 \end{align}
 with weight $3n_2^2 + 3n_2$.  
 If, however, $n_1 > 0$, that is, there is at least one 1-cluster, 
 the seemingly obvious choice 
 \begin{align}
 \label{basePtnKR2No2wrong}
  \left\{
    \begin{array}{cc} 3 \\ 3 \end{array}
    \begin{array}{cc} 6 \\ 6 \end{array}
    \begin{array}{cc} \\ \cdots \end{array}
    \begin{array}{cc} 3n_2 \\ 3n_2 \end{array}
    \begin{array}{cc} \\ 3n_2 + 3 \end{array}
    \begin{array}{cc} \\ 3n_2 + 5 \end{array}
    \begin{array}{cc} \\ \cdots \end{array}
    \begin{array}{cc} \\ 3n_2 + 2n_1 + 1 \end{array}
  \right\}
 \end{align}
 does not have minimal weight.  
 Moreover, one can never obtain a partition counted by $kr_2(n, m)$ 
 containing the part 2 this way.  
 The correct base partition in this case is
 \begin{align}
 \label{basePtnKR2No2correct}
  \left\{
    \begin{array}{cc} \\ 2 \end{array}
    \begin{array}{cc} & 5 \\ 4 & \end{array}
    \begin{array}{cc} & 8 \\ 7 & \end{array}
    \begin{array}{cc} \\ \cdots \end{array}
    \begin{array}{cc} & 3n_2+2 \\ 3n_2+1 & \end{array}
    \begin{array}{cc} \\ 3n_2 + 4 \end{array}
    \begin{array}{cc} \\ 3n_2 + 6 \end{array}
    \begin{array}{cc} \\ \cdots \end{array}
    \begin{array}{cc} \\ 3n_2 + 2n_1 \end{array}
  \right\}
    \begin{array}{cc} \\ , \end{array}
 \end{align}
 for $n_1 > 0$.  
 One can check that \eqref{basePtnKR2No2correct} has smaller weight than \eqref{basePtnKR2No2wrong}, 
 and that any other lineup of 2- and 1-clusters results in a greater weight.  
 \eqref{basePtnKR2No2correct} has weight $3n_2^2 + 3n_2 + n_1^2 + n_1 + 3n_2n_1$, 
 the $n_1 = 0$ case of which yields the weight of \eqref{basePtnKR2No1}.  
 
 There is one more twist before we leave the rest of the proof to the reader.  
 We need to discuss how the smallest 1-cluster can move forward.  
 Recall that in the proof of Theorem \ref{thmKR1}, 
 in order for the smallest one cluster to move forward, 
 each of the other 1-clusters must have moved forward at least once.  
 It is the same here, 
 so we assume that all but the smallest 1-clusters, if any, 
 have moved in \eqref{basePtnKR2No2correct}.  
 This yields the configuration below.  
 \begin{align*}
  \left\{
    \begin{array}{cc} \\ \mathbf{2} \end{array}
    \begin{array}{cc} & 5 \\ 4 & \end{array}
    \begin{array}{cc} & 8 \\ 7 & \end{array}
    \begin{array}{cc} \\ \cdots \end{array}
    \begin{array}{cc} & 3n_2+2 \\ 3n_2+1 & \end{array}
    \begin{array}{cc} \\ 3n_2 + 5 \end{array}
    \begin{array}{cc} \\ 3n_2 + 7 \end{array}
    \begin{array}{cc} \\ \cdots \end{array}
    \begin{array}{cc} \\ 3n_2 + 2n_1 + 1 \end{array}
  \right\}
 \end{align*}
 Now we want to move the smallest 1-cluster forward once.  
 This will entail \emph{prestidigitation} of the smallest 1-cluster 
 through the 2-clusters  (please see section \ref{secConclusion} and the appendix).  
 \begin{align*}
  \left\{ \begin{array}{cc} \\ \phantom{0} \end{array} \right.
    \underbrace{
    \begin{array}{cc} \\ \mathbf{3} \end{array}
    \begin{array}{cc} & 5 \\ 4 & \end{array}
    }_{!}
    \begin{array}{cc} & 8 \\ 7 & \end{array}
    \begin{array}{cc} \\ \cdots \end{array}
    \begin{array}{cc} & 3n_2+2 \\ 3n_2+1 & \end{array}
    \begin{array}{cc} \\ 3n_2 + 5 \end{array}
    \begin{array}{cc} \\ 3n_2 + 7 \end{array}
    \begin{array}{cc} \\ \cdots \end{array}
    \left. \begin{array}{cc} \\ 3n_2 + 2n_1 + 1 \end{array}
  \right\}
 \end{align*}
 \begin{align*}
  \big\downarrow \textrm{ adjustment }
 \end{align*}
 \begin{align*}
  \left\{ \begin{array}{cc} \\ \phantom{0} \end{array} \right.
    \begin{array}{cc} 3 \\ 3 \end{array}
    \underbrace{
    \begin{array}{cc} \\ \mathbf{6} \end{array}
    \begin{array}{cc} & 8 \\ 7 & \end{array}
    }_{!}
    \begin{array}{cc} \\ \cdots \end{array}
    \begin{array}{cc} & 3n_2+2 \\ 3n_2+1 & \end{array}
    \begin{array}{cc} \\ 3n_2 + 5 \end{array}
    \begin{array}{cc} \\ 3n_2 + 7 \end{array}
    \begin{array}{cc} \\ \cdots \end{array}
    \left. \begin{array}{cc} \\ 3n_2 + 2n_1 + 1 \end{array}
  \right\}
 \end{align*}
 \begin{align*}
  \big\downarrow n_2 - 1 \textrm{ more adjustments in a similar fashion }
 \end{align*}
 \begin{align*}
  \left\{ 
    \begin{array}{cc} 3 \\ 3 \end{array}
    \begin{array}{cc} 6 \\ 6 \end{array}
    \begin{array}{cc} \\ \cdots \end{array}
    \begin{array}{cc} 3n_2 \\ 3n_2 \end{array}
    \begin{array}{cc} \\ \mathbf{3n_2 + 3} \end{array}
    \begin{array}{cc} \\ 3n_2 + 5 \end{array}
    \begin{array}{cc} \\ 3n_2 + 7 \end{array}
    \begin{array}{cc} \\ \cdots \end{array}
    \begin{array}{cc} \\ 3n_2 + 2n_1 + 1 \end{array}
  \right\}
    \begin{array}{cc} \\ , \end{array}
 \end{align*}
 incidentally arriving at \eqref{basePtnKR2No2wrong}, 
 the weight of which is exactly $n_1$ more than that of \eqref{basePtnKR2No2correct},  
 for this reason.  
 
 As in the proof of Theorem \ref{thmKR1}, after the backward moves on the 2-clusters 
 making the intermediate partition
 \begin{align*}
  \left\{ 
    \begin{array}{cc} 3 \\ 3 \end{array}
    \begin{array}{cc} 6 \\ 6 \end{array}
    \begin{array}{cc} \\ \cdots \end{array}
    \begin{array}{cc} 3n_2 \\ 3n_2 \end{array}
    \begin{array}{cc} \\ ( \textrm{ parts } \geq 3n_2+3, \textrm{ all 1-clusters } ) \end{array}
  \right\}
    \begin{array}{cc} \\ . \end{array}
 \end{align*}
 We first move the smallest 1-cluster so as to bring it back to $3n_2 + 3$, 
 recorging the number of moves as $\mu_1 - 1$.  
 Now the intermediate partition looks like 
 \begin{align*}
  \left\{ 
    \begin{array}{cc} 3 \\ 3 \end{array}
    \begin{array}{cc} 6 \\ 6 \end{array}
    \begin{array}{cc} \\ \cdots \end{array}
    \begin{array}{cc} 3n_2 \\ 3n_2 \end{array}
    \begin{array}{cc} \\ \mathbf{3n_2 + 3} \end{array}
    \begin{array}{cc} \\ ( \textrm{ parts } \geq 3n_2+5, \textrm{ all 1-clusters } ) \end{array}
  \right\}
    \begin{array}{cc} \\ . \end{array}
 \end{align*}
 The final backward move on the smallest 1-cluster will again entail 
 prestidigitation of the smallest 1-cluster through the 2-clusters.  
 \begin{align*}
  \left\{ \begin{array}{cc} \\ \phantom{0} \end{array} \right.
    \begin{array}{cc} 3 \\ 3 \end{array}
    \begin{array}{cc} 6 \\ 6 \end{array}
    \begin{array}{cc} \\ \cdots \end{array}
    \underbrace{
    \begin{array}{cc} 3n_2 \\ 3n_2 \end{array}
    \begin{array}{cc} \\ \mathbf{3n_2 + 2} \end{array}
    }_{!}
    \left. \begin{array}{cc} \\ ( \textrm{ parts } \geq 3n_2+5) \end{array}
  \right\}
 \end{align*}
 \begin{align*}
  \big\downarrow \textrm{ adjustment }
 \end{align*}
 \begin{align*}
  \left\{ \begin{array}{cc} \\ \phantom{0} \end{array} \right.
    \begin{array}{cc} 3 \\ 3 \end{array}
    \begin{array}{cc} 6 \\ 6 \end{array}
    \begin{array}{cc} \\ \cdots \end{array}
    \underbrace{
    \begin{array}{cc} 3n_2-3 \\ 3n_2-3 \end{array}
    \begin{array}{cc} \\ \mathbf{3n_2 - 1} \end{array}
    }_{!}
    \begin{array}{cc} & 3n_2+2 \\ 3n_2+1 & \end{array}
    \left. \begin{array}{cc} \\ ( \textrm{ parts } \geq 3n_2+5) \end{array}
  \right\}
 \end{align*}
 \begin{align*}
  \big\downarrow \textrm{ after } n_2-1 \textrm{ more adjustments of similar sort }
 \end{align*}
 \begin{align*}
  \left\{ \begin{array}{cc} \\ \phantom{0} \end{array} \right.
    \begin{array}{cc} \\ \mathbf{2} \end{array}
    \begin{array}{cc} & 5 \\ 4 & \end{array}
    \begin{array}{cc} & 8 \\ 7 & \end{array}
    \begin{array}{cc} \\ \cdots \end{array}
    \begin{array}{cc} & 3n_2+2 \\ 3n_2+1 & \end{array}
    \left. \begin{array}{cc} \\ ( \textrm{ parts } \geq 3n_2+5) \end{array}
  \right\}
    \begin{array}{cc} \\ . \end{array}
 \end{align*}
 As far as the lineup of the smallest 1-cluster and all the 2-clusters is concerned, 
 the initial and terminal partitions are swapped in the forward and the backward moves.  
 Also, notice that this extra move on the smallest 1-cluster opens room for 
 the larger 1-clusters to move backward at least once more.  
 The remaining parts of the proof are completely analogous 
 to those parts of the proof of Theorem \ref{thmKR1}.  
\end{proof}

\begin{theorem}[cf. Kanade-Russell conjecture $I_3$]
\label{thmKR3}
  For $n, m \in \mathbb{N}$, 
  let $kr_3(n, m)$ be the number of partitions of $n$ into $m$ parts 
  with smallest part at least three, and difference at least three at distance two 
  such that if two successive parts differ by at most one, 
  then their sum is divisible by three.  
  Then, 
  \begin{align}
  \label{eqGF_KR3}
    \sum_{n, m \geq 0} kr_3(n, m) q^n x^m 
    = \sum_{n_1, n_2 \geq 0} \frac{ q^{3n_2^2 + 3n_2 + n_1^2 + 2n_1 + 3n_1n_2} x^{2n_2 + n_1} }
      { (q; q)_{n_1} (q^3; q^3)_{n_2} }.  
  \end{align}
\end{theorem}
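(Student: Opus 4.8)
The plan is to follow the proof of Theorem \ref{thmKR1} almost word for word, the only substantive change being the base partition. Given $\lambda$ enumerated by $kr_3(n,m)$ with $n_2$ many $2$-clusters and $n_1$ many $1$-clusters---so that $m = 2n_2 + n_1$, and, exactly as in the proof of Theorem \ref{thmKR1}, no $r$-cluster with $r \geq 3$ can occur---I would build a triple $(\beta, \mu, \eta)$ in which $\beta$ is the minimal-weight arrangement of $n_2$ $2$-clusters and $n_1$ $1$-clusters obeying the constraints defining $kr_3$, $\mu$ is a partition into $n_1$ parts (zeros allowed, generated by $1/(q;q)_{n_1}$), $\eta$ is a partition into $n_2$ multiples of three (zeros allowed, generated by $1/(q^3;q^3)_{n_2}$), and $\vert \lambda \vert = \vert \beta \vert + \vert \mu \vert + \vert \eta \vert$; conversely I would recover a unique such $\lambda$ from each triple. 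Arranging the two directions to invert each other step by step then yields \eqref{eqGF_KR3}.

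First I would pin down $\beta$. Since every part must be at least three, the smallest admissible $2$-cluster is $\begin{array}{cc} 3 \\ 3 \end{array}$---the configuration $\begin{array}{cc} & 4 \\ 3 & \end{array}$ is excluded because $3 + 4$ is not divisible by three---and then the smallest choice for each successive $2$-cluster is of the form $\begin{array}{cc} 3k \\ 3k \end{array}$. Stacking all the $2$-clusters first and appending the $1$-clusters as low as the difference-at-distance-two condition permits gives
\begin{align*}
 \beta = \left\{
 \begin{array}{cc} 3 \\ 3 \end{array}
 \begin{array}{cc} 6 \\ 6 \end{array}
 \begin{array}{cc} \\ \cdots \end{array}
 \begin{array}{cc} 3n_2 \\ 3n_2 \end{array}
 \begin{array}{cc} \\ 3n_2+3 \end{array}
 \begin{array}{cc} \\ 3n_2+5 \end{array}
 \begin{array}{cc} \\ \cdots \end{array}
 \begin{array}{cc} \\ 3n_2+2n_1+1 \end{array}
 \right\}
 \begin{array}{cc} \\ , \end{array}
\end{align*}
valid uniformly for all $n_1, n_2 \geq 0$; when $n_1 = 0$ it degenerates to the lineup of $\begin{array}{cc} 3 \\ 3 \end{array}$, $\begin{array}{cc} 6 \\ 6 \end{array}$, \ldots, $\begin{array}{cc} 3n_2 \\ 3n_2 \end{array}$ alone. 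A short computation then gives
\begin{align*}
 \vert \beta \vert = 3n_2^2 + 3n_2 + n_1^2 + 2n_1 + 3n_1 n_2, \qquad l(\beta) = 2n_2 + n_1,
\end{align*}
which already reproduces the exponents of $q$ and of $x$ on the right-hand side of \eqref{eqGF_KR3}; together with the generating functions of $\mu$ and $\eta$, the bijection then proves the theorem.

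The bijection itself transcribes the one in Theorem \ref{thmKR1}. Starting from $(\beta, \mu, \eta)$, I would add the $i$th largest part of $\mu$ to the $i$th largest $1$-cluster for $i = 1, \ldots, n_1$ (since the $1$-clusters are at least two apart and larger shifts go to larger clusters, this respects the difference conditions and merely spreads the $1$-clusters further apart), and then make $\tfrac{1}{3}$ times the $i$th largest part of $\eta$ forward moves on the $i$th largest $2$-cluster for $i = 1, \ldots, n_2$, using verbatim the same list of local configurations and weight-preserving adjustments as in that proof; each forward move adds three to the weight, which is why the parts of $\eta$ must be multiples of three. The inverse runs the same cases in reverse order: from $\lambda$ I would read off $n_1$ and $n_2$, move the smallest $2$-cluster backward until it is $\begin{array}{cc} 3 \\ 3 \end{array}$ and record the number of moves as $\tfrac{1}{3}\eta_1$, then the next-smallest until it is $\begin{array}{cc} 6 \\ 6 \end{array}$, and so on, and finally slide the $1$-clusters down to $3n_2+3, 3n_2+5, \ldots$ to read off $\mu$. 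One point worth flagging: in contrast to Theorem \ref{thmKR2}, here the smallest $1$-cluster lies to the right of every $2$-cluster both in $\beta$ and at every stage of the process, so it simply bottoms out at $3n_2+3$ with no ``prestidigitation'' cascade---which is precisely why a single base partition suffices here rather than two.

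I expect the only genuinely new verification to be the minimality of $\beta$ among all lineups of $n_1$ $1$-clusters and $n_2$ $2$-clusters compatible with the constraints of $kr_3$. The essential point is that pulling a $1$-cluster down to the bottom (say to $3$) forces the $2$-cluster behind it up to $\begin{array}{cc} 6 \\ 6 \end{array}$ (or $\begin{array}{cc} & 8 \\ 7 & \end{array}$), which by direct computation is a net increase in weight; analogous comparisons eliminate every other interleaving, so the arrangement with all $2$-clusters packed first, each as small as possible, followed by the $1$-clusters, each as small as possible, indeed minimizes the weight. Once this is settled, the remainder is a line-by-line copy of the proof of Theorem \ref{thmKR1}, with $\begin{array}{cc} & 2 \\ 1 & \end{array}$, $\begin{array}{cc} & 5 \\ 4 & \end{array}$, \ldots\ replaced throughout by $\begin{array}{cc} 3 \\ 3 \end{array}$, $\begin{array}{cc} 6 \\ 6 \end{array}$, \ldots\ and with the smallest $1$-cluster starting at $3n_2 + 3$ instead of $3n_2 + 1$.
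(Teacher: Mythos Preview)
Your proposal is correct and matches the paper's own proof essentially line for line: the paper likewise says the argument of Theorem~\ref{thmKR1} applies \emph{mutatis mutandis}, with the sole change being the base partition
\[
\left\{
\begin{array}{cc} 3 \\ 3 \end{array}
\begin{array}{cc} 6 \\ 6 \end{array}
\begin{array}{cc} \\ \cdots \end{array}
\begin{array}{cc} 3n_2 \\ 3n_2 \end{array}
\begin{array}{cc} \\ 3n_2+3 \end{array}
\begin{array}{cc} \\ 3n_2+5 \end{array}
\begin{array}{cc} \\ \cdots \end{array}
\begin{array}{cc} \\ 3n_2+2n_1+1 \end{array}
\right\}
\]
of weight $3n_2^2 + 3n_2 + n_1^2 + 2n_1 + 3n_1n_2$. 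Your additional observation that no prestidigitation is needed here (unlike in Theorem~\ref{thmKR2}) is a nice clarification the paper leaves implicit.
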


\begin{proof}
 The proof of Theorem \ref{thmKR1} applies \emph{mutatis mutandis}.  
 The only difference being the base partition $\beta$: 
 \begin{align*}
  \left\{
    \begin{array}{cc} 3 \\ 3 \end{array}
    \begin{array}{cc} 6 \\ 6 \end{array}
    \begin{array}{cc} \\ \cdots \end{array}
    \begin{array}{cc} 3n_2 \\ 3n_2 \end{array}
    \begin{array}{cc} \\ 3n_2 + 3 \end{array}
    \begin{array}{cc} \\ 3n_2 + 5 \end{array}
    \begin{array}{cc} \\ \cdots \end{array}
    \begin{array}{cc} \\ 3n_2 + 2n_1 + 1 \end{array}
  \right\}
    \begin{array}{cc} \\ . \end{array}
 \end{align*}
 It is \eqref{basePtnKR2No2wrong}, and has weight $3n_2^2 + 3n_2 + n_1^2 + 2n_1 + 3n_1n_2$.  
 This weight is minimal among all partitions having $n_2$ 2-clusters, $n_1$ 1-clusters, 
 and satisfying the difference conditions imposed by $kr_3(n, m)$.  
\end{proof}

\begin{theorem}[cf. Kanade-Russell conjecture $I_4$]
\label{thmKR4}
  For $n, m \in \mathbb{N}$, 
  let $kr_4(n, m)$ be the number of partitions of $n$ into $m$ parts 
  with smallest part at least two, and difference at least three at distance two 
  such that if two successive parts differ by at most one, 
  then their sum is $\equiv 2 \pmod{3}$.  
  Then, 
  \begin{align}
  \label{eqGF_KR4}
    \sum_{n, m \geq 0} kr_4(n, m) q^n x^m 
    = \sum_{n_1, n_2 \geq 0} \frac{ q^{3n_2^2 + 2n_2 + n_1^2 + n_1 + 3n_1n_2} x^{2n_2 + n_1} }
      { (q; q)_{n_1} (q^3; q^3)_{n_2} }.  
  \end{align}
\end{theorem}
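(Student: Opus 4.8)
The plan is to mimic the proof of Theorem \ref{thmKR1} (the way Theorem \ref{thmKR3} does), producing a weight-preserving bijection $\lambda \leftrightarrow (\beta,\mu,\eta)$, where $\beta$ is the minimal base partition having $n_2$ $2$-clusters and $n_1$ $1$-clusters and obeying the difference conditions of $kr_4(n,m)$, $\mu$ is an arbitrary partition into $n_1$ parts (zeros allowed), and $\eta$ is a partition into $n_2$ multiples of three (zeros allowed). Since no $r$-cluster with $r\ge 3$ can occur, the first step is to identify the admissible cluster shapes: a $2$-cluster is a pair $\{a,a\}$ or $\{a,a+1\}$ whose sum is $\equiv 2\pmod 3$, which forces the forms $\{3k+1,3k+1\}$ and $\{3k+2,3k+3\}$. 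This is exactly the picture of Theorem \ref{thmKR1} with the admissible shapes $\{3k,3k\}$ and $\{3k+1,3k+2\}$ shifted up by one.

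Next I would write down $\beta$. With smallest part $\ge 2$, the smallest admissible $2$-cluster is $\{2,3\}$, the next (respecting difference at least three at distance two) is $\{5,6\}$, and so on; putting all $2$-clusters before all $1$-clusters (checked to be weight-minimal, as in Theorem \ref{thmKR1}) and taking the $1$-clusters as small as possible gives
\[
 \beta \;=\; \{2,3\},\,\{5,6\},\,\ldots,\,\{3n_2-1,3n_2\},\; 3n_2+2,\; 3n_2+4,\;\ldots,\; 3n_2+2n_1 .
\]
Note this single formula is valid for all $n_1,n_2\ge 0$; unlike Theorem \ref{thmKR2}, no separate base partition and no \emph{prestidigitation} of the smallest $1$-cluster is needed, because the forced smallest part $2$ is already absorbed into the first $2$-cluster. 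A direct computation gives $\vert\beta\vert=(3n_2^2+2n_2)+(n_1^2+n_1+3n_1n_2)$ and $l(\beta)=2n_2+n_1$, matching the summand in \eqref{eqGF_KR4}. As usual, $\mu$ is generated by $1/(q;q)_{n_1}$ and $\eta$ by $1/(q^3;q^3)_{n_2}$.

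Then I would transcribe the moves. A custom forward move on a $2$-cluster sends $\{3k+1,3k+1\}\mapsto\{3k+2,3k+3\}$ and $\{3k+2,3k+3\}\mapsto\{3k+4,3k+4\}$, each adding exactly $3$ to the weight (this is why the parts of $\eta$ are multiples of three); forward moves on $1$-clusters are the usual ones; the various ``obstacle'' adjustments, and the verification that a forward move on a $2$-cluster frees a forward move on the preceding $2$-cluster, are term-by-term analogues of \eqref{move_kr1_fwd2_m1}--\eqref{move_kr1_fwd2_m4} and the surrounding cases, shifted by one as above. As in Theorem \ref{thmKR1}, one moves the $i$th largest $1$-cluster forward (the $i$th largest part of $\mu$) times and then the $i$th largest $2$-cluster $\tfrac13\times$(the $i$th largest part of $\eta$) times to build $\lambda$; the backward moves are the inverses and are performed in the reverse order, recovering $(\beta,\mu,\eta)$ from $\lambda$. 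Since both directions are built to be mutually inverse step by step, this yields the claimed identity exactly as \eqref{eqGFtr1} and \eqref{eqGFtr2} yield Theorem \ref{thmKR1}.

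The main obstacle is the only genuinely non-mechanical point: verifying that the displayed $\beta$ is weight-minimal among all arrangements of $n_2$ $2$-clusters and $n_1$ $1$-clusters satisfying the $kr_4$ conditions — in particular that using the $2$-cluster $\{2,3\}$ beats a lone $1$-cluster $2$ followed by a $2$-cluster, and that no interleaving of $1$- and $2$-clusters lowers the weight. Once the shifted cluster shapes are in hand, enumerating the handful of adjustment cases and checking locally the difference conditions at distance one and at distance two is routine, so I would present those in abbreviated form, as in the proofs of Theorems \ref{thmKR1} and \ref{thmKR3}.
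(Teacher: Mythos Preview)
Your approach is correct but more laborious than the paper's. The paper does not rebuild the base partition and the forward/backward moves from scratch; instead it observes that adding $1$ to every part of a partition counted by $kr_1(n,m)$ produces a partition counted by $kr_4(n+m,m)$, and conversely subtracting $1$ from every part of a $kr_4$-partition yields a $kr_1$-partition. This gives $kr_4(n+m,m)=kr_1(n,m)$, so the generating function for $kr_4$ is obtained from \eqref{eqGF_KR1} by the substitution $x\mapsto xq$, which immediately produces the extra $2n_2+n_1$ in the exponent and hence \eqref{eqGF_KR4}.

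You actually spotted this yourself --- your remark that the admissible $2$-cluster shapes $\{3k+1,3k+1\}$ and $\{3k+2,3k+3\}$ are ``exactly the picture of Theorem \ref{thmKR1} \ldots\ shifted up by one'' is precisely the whole content of the paper's proof --- but you then proceed to transcribe every move and adjustment instead of invoking the shift once and being done. Your base partition is indeed the Theorem~\ref{thmKR1} base partition with $1$ added to each part, and all your custom moves are the Theorem~\ref{thmKR1} moves conjugated by this shift, so everything you wrote is correct; it is just that the paper packages the entire transcription into a single global bijection. The advantage of your route is that it is self-contained and makes the minimality of $\beta$ explicit; the advantage of the paper's route is that it is a two-line reduction and avoids repeating any casework.
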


\begin{proof}
 We observe that if we take a partition counted by $kr_1(n, m)$ and add 1 to all parts, 
 the smallest parts becomes at least two.  
 Also, the 2-clusters, the only pair of parts whose pairwise difference is at most one, 
 become
 \begin{align*}
  \left\{
    \begin{array}{cc} \\ (\textrm{ parts } \leq 3k-2) \end{array}
    \begin{array}{cc} 3k+1 \\ 3k+1 \end{array}
    \begin{array}{cc} \\ (\textrm{ parts } \geq 3k+4) \end{array}
  \right\}
 \end{align*}
 and 
 \begin{align*}
  \left\{
    \begin{array}{cc} \\ (\textrm{ parts } \leq 3k) \end{array}
    \begin{array}{cc} & 3k+3 \\ 3k+2 & \end{array}
    \begin{array}{cc} \\ (\textrm{ parts } \geq 3k+5) \end{array}
  \right\}
    \begin{array}{cc} \\ , \end{array}
 \end{align*}
 instead of 
 \begin{align*}
  \left\{
    \begin{array}{cc} \\ (\textrm{ parts } \leq 3k-3) \end{array}
    \begin{array}{cc} 3k \\ 3k \end{array}
    \begin{array}{cc} \\ (\textrm{ parts } \geq 3k+3) \end{array}
  \right\}
 \end{align*}
 and 
 \begin{align*}
  \left\{
    \begin{array}{cc} \\ (\textrm{ parts } \leq 3k-1) \end{array}
    \begin{array}{cc} & 3k+2 \\ 3k+1 & \end{array}
    \begin{array}{cc} \\ (\textrm{ parts } \geq 3k+4) \end{array}
  \right\}
    \begin{array}{cc} \\ , \end{array}
 \end{align*}
 respectively.  
 Therefore, the sum of parts of the displayed 2-clusters become $\equiv 2 \pmod{3}$, 
 conforming to the definition of $kr_4(n, m)$.  
 
 Conversely, a partition enumerated by $kr_4(n, m)$ 
 can only have 1- or 2-marked parts in its Gordong marking.  
 Therefore, such a partition can have $r$-clusters for $r = 1, 2$, 
 but not for $r \geq 3$.  
 Because the 2-clusters consist of a pair of parts with difference zero or one, 
 they can be 
 \begin{align*}
  \begin{array}{cc} 3k \\ 3k \end{array}
  \begin{array}{cc} \\ , \end{array}
  \begin{array}{cc} 3k+1 \\ 3k+1 \end{array}
  \begin{array}{cc} \\ , \end{array}
  \begin{array}{cc} 3k+2 \\ 3k+2 \end{array}
  \begin{array}{cc} \\ , \end{array}
  \begin{array}{cc} & 3k+1 \\ 3k & \end{array}
  \begin{array}{cc} \\ , \end{array}
  \begin{array}{cc} & 3k+2 \\ 3k+1 & \end{array}
  \begin{array}{cc} \\ , \textrm{ or } \end{array}
  \begin{array}{cc} & 3k+3 \\ 3k+2 & \end{array}
  \begin{array}{cc} \\ . \end{array}
 \end{align*}
 Only the second and the sixth ones have sums $\equiv 2 \pmod{3}$, 
 therefore only such 2-clusters can occur in the said partition.  
 Because all parts are at least two we will not lose any parts, 
 nor do we need to redo the Gordon marking when we subtract one from all parts.  
 This operation makes the partition satisfy the conditions of $kr_1(n, m)$.  
 Therefore, we have $kr_4(n, m) = kr_1(n+m, m)$, yielding the theorem.   
\end{proof}

We can now turn our attention to the missing cases of partitions 
defined similarly to $kr_1(n, m)$-$kr_4(n, m)$.  
It turns out that only two such cases needs justification like the proofs of 
Theorems \ref{thmKR1}-\ref{thmKR3}, 
and the remaining ones can be obtained via shifts as in the proof of Theorem \ref{thmKR4}. 
Although Kanade and Russell's machinery in \cite{KR-conj} does not give nice single infinite products, 
hence nice partition identities for these missing cases, 
it is possible to write generating functions for them 
such as the Andrews-Gordon identities \cite{Andrews-Gordon}.  

\begin{theorem}
\label{thmKR3-1}
  For $n, m \in \mathbb{N}$, 
  let $kr_{3-1}(n, m)$ be the number of partitions of $n$ into $m$ parts 
  with smallest part at least two, and difference at least three at distance two 
  such that if two successive parts differ by at most one, 
  then their sum is $\equiv 2 \pmod{3}$.  
  Then, 
  \begin{align*}
    \sum_{n, m \geq 0} kr_{3-1}(n, m) q^n x^m 
    & = \sum_{n_1, n_2 \geq 1} \frac{ q^{3n_2^2 + 6n_2 + n_1^2 + 3n_1 + 3n_1n_2-1} x^{2n_2 + n_1} }
      { (q; q)_{n_1} (q^3; q^3)_{n_2} } \\ 
    & + \sum_{n_2 \geq 0} \frac{ q^{3n_2^2 + 6n_2} x^{2n_2} }
      { (q^3; q^3)_{n_2} } 
    + \sum_{n_1 \geq 1} \frac{ q^{n_1^2 + 2n_1} x^{n_1} }
      { (q; q)_{n_1} }.  
  \end{align*}
\end{theorem}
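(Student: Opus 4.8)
The proof will follow the pattern of Theorems \ref{thmKR1}--\ref{thmKR3}, closest to that of Theorem \ref{thmKR2}. The plan is, for each admissible $\lambda$, to build a unique triple $(\beta,\mu,\eta)$ in which $\beta$ is a minimal-weight base partition with a prescribed number $n_2$ of $2$-clusters and $n_1$ of $1$-clusters (so that $l(\beta) = 2n_2 + n_1$), $\mu$ is an ordinary partition into $n_1$ parts, zeros allowed, contributing a factor $1/(q;q)_{n_1}$, and $\eta$ is a partition into $n_2$ multiples of three, zeros allowed, contributing $1/(q^3;q^3)_{n_2}$, with $|\lambda| = |\beta| + |\mu| + |\eta|$; summing $q^{|\beta|}x^{l(\beta)}$ against these two factors over all admissible $(n_1,n_2)$ then produces the series on the right. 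First I would note, exactly as in the earlier proofs, that an admissible $\lambda$ carries no mark exceeding $2$ in its Gordon marking, since a $3$-marked part would force difference at most one at distance two; hence only $1$- and $2$-clusters occur and $m = 2n_2 + n_1$.

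The first real task is to pin down $\beta$. As in Theorem \ref{thmKR2}, the minimal lineup of the clusters genuinely depends on whether $n_1$ or $n_2$ vanishes, and this is precisely why the right-hand side splits into three pieces: $n_1 = 0$ (only $2$-clusters), $n_2 = 0$ (only $1$-clusters, where $\beta$ is built exactly as in the $n_2 = 0$ specialisation of the base partition in Theorem \ref{thmKR3} and has weight $n_1^2 + 2n_1$), and $n_1,n_2 \ge 1$. In each regime I would place the smallest admissible $2$-clusters first and the $1$-clusters afterwards, check against the difference-at-distance-two condition and the divisibility condition on close pairs, and argue minimality by a local exchange showing that moving a larger $2$-cluster past a smaller $1$-cluster, or enlarging any single cluster, never lowers the weight. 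In the mixed regime the decisive point is that the smallest $1$-cluster cannot be stowed directly above the block of $2$-clusters; instead it must be \emph{prestidigitated} through them, precisely the phenomenon separating \eqref{basePtnKR2No2correct} from \eqref{basePtnKR2No2wrong} in the proof of Theorem \ref{thmKR2}, and this is what produces the $-1$ in the exponent $3n_2^2 + 6n_2 + n_1^2 + 3n_1 + 3n_1 n_2 - 1$.

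With $\beta$ in hand the bijection is mechanical. Starting from $(\beta,\mu,\eta)$ I would add the $i$th largest part of $\mu$ to the $i$th largest $1$-cluster (a string of forward moves on $1$-clusters, harmless because consecutive $1$-clusters start at least two apart and larger increments go to larger clusters), then perform $\tfrac13\times$(the $i$th largest part of $\eta$) forward moves on the $i$th largest $2$-cluster, using the same finite catalogue of ``one forward move, possibly followed by adjustments'' as in Theorem \ref{thmKR1}: each forward move adds exactly $3$ to the weight, so the parts of $\eta$ are forced to be multiples of three; each adjustment is weight-neutral; and both preserve the difference and divisibility conditions, which can be checked locally among consecutive parts and at distance two. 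The inverse runs the same moves in reverse order: read $n_1,n_2$ off $\lambda$, slide the $2$-clusters down into their minimal slots recording $\tfrac13\eta_i$ (including, in the mixed regime, the final prestidigitation of the smallest $1$-cluster back through the $2$-clusters), then slide the $1$-clusters down recording $\mu_i$, ending at $\beta$. Summing over the three regimes gives the stated identity.

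The step I expect to be the main obstacle is the mixed regime: one must verify that the prestidigitation of the smallest $1$-cluster through all $n_2$ of the $2$-clusters, together with its reverse, is a well-defined weight-preserving involution that never violates the difference-at-distance-two or the close-pair divisibility condition, and that the resulting base partition really is of minimal weight among all arrangements of $n_2$ $2$-clusters and $n_1$ $1$-clusters meeting the constraints. Everything else should be a routine transcription of the case analyses already carried out for Theorems \ref{thmKR1} and \ref{thmKR2}.
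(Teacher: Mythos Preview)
Your proposal is correct and follows essentially the same approach as the paper: the paper likewise splits into the three regimes $n_1,n_2\ge 1$; $n_1=0$, $n_2\ge 0$; and $n_2=0$, $n_1\ge 1$, writes down an explicit base partition $\beta$ for each (in the mixed case with the smallest $1$-cluster placed \emph{below} the block of $2$-clusters, exactly the \eqref{basePtnKR2No2correct}-versus-\eqref{basePtnKR2No2wrong} phenomenon you identify as the source of the $-1$), and then defers the forward/backward move mechanics to Theorems~\ref{thmKR1} and~\ref{thmKR2}. The only refinement worth noting is that in the mixed regime the correct base partition has the single smallest $1$-cluster first, then all $2$-clusters, then the remaining $1$-clusters---so your phrase ``$2$-clusters first and the $1$-clusters afterwards'' should be read with that exception in mind, as you in fact do two sentences later.
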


\begin{proof}
 The idea of the proof is a direct extension of the proof of Theorem \ref{thmKR2} 
 based on the proof of Theorem \ref{thmKR1}.  
 The necessity of separate sums is in fact the necessity of different types of base partitions $\beta$ 
 for various constellations of the 2- and 1-clusters.  
 Observe that the ranges of the three sums 
 ($n_1, n_2 \geq 1$; $n_1 = 0$, $n_2 \geq 0$; $n_1 \geq 1$, $n_2 = 0$) 
 form a set partition of the expected natural range $n_1, n_2 \geq 0$.  
 Recall that $n_r$ is the number of the $r$-clusters of the partition at hand for $r = 1, 2$.  
 
 The base partition for the case $n_1, n_2 \geq 1$ is 
 \begin{align*}
  \left\{
    \begin{array}{cc} \\ 3 \end{array}
    \begin{array}{cc} 6 \\ 6 \end{array}
    \begin{array}{cc} 9 \\ 9 \end{array}
    \begin{array}{cc} \\ \cdots \end{array}
    \begin{array}{cc} 3n_3 + 3 \\ 3n_3 + 3 \end{array}
    \begin{array}{cc} \\ 3n_3 + 6 \end{array}
    \begin{array}{cc} \\ 3n_3 + 8 \end{array}
    \begin{array}{cc} \\ \cdots \end{array}
    \begin{array}{cc} \\ 3n_3 + 2n_1 + 4 \end{array}
  \right\}
    \begin{array}{cc} \\ , \end{array}
 \end{align*}
 with weight $3n_2^2 + 6n_2 + n_1^2 + 3n_1 + 3n_1n_2-1$.  
 Clearly, there are no 1-clusters greater than the 2-clusters if $n_1 = 1$.  
 
 When $n_1 = 0$ and $n_2 \geq 0$, the base partition $\beta$ is 
 \begin{align*}
  \left\{
    \begin{array}{cc} & 5 \\ 4 & \end{array}
    \begin{array}{cc} & 8 \\ 7 & \end{array}
    \begin{array}{cc} \\ \cdots \end{array}
    \begin{array}{cc} & 3n_3 + 2 \\ 3n_3 + 1 & \end{array}
  \right\}
    \begin{array}{cc} \\ , \end{array}
 \end{align*}
 with weight $3n_2^2 + 6n_2$.  It is the empty partition if $n_2 = 0$.  
 
 And finally, if $n_2 = 0$ and $n_1 \geq 1$, the base partition $\beta$ is 
 \begin{align*}
  \left\{ 
    \begin{array}{c} 3 \end{array}
    \begin{array}{c} 5 \end{array}
    \begin{array}{c} \cdots \end{array}
    \begin{array}{c} 2n_1 + 1\end{array}
  \right\} 
    \begin{array}{c} , \end{array}
 \end{align*}
 with weight $n_1^2 + 2n_1$.  
 We do not want to double count the empty partition here, hence $n_1 \geq 1$.  
 
 Without much difficulty, one can verify that the above $\beta$s are partitions with minimal weight 
 having specified number of 1- and 2-clusters ($n_1$ and $n_2$, respectively), 
 while satisfying the difference conditions set forth by $kr_{3-1}(n, m)$.  
\end{proof}

One can play with the $\pmod{3}$ condition on sums, 
and adjust the lower limit for the smallest part to populate the list.  
Theorems \ref{thmKR1}-\ref{thmKR3-1} are exclusive to obtain the respective series 
as generating functions by means of shifts on parts.  
We present two more examples.  

\begin{theorem}
\label{thmMS1-4}
  For $n, m \in \mathbb{N}$, let us define the partition enumerants below.  
  
  $kr^b_1(n, m)$ is the number of partitions of $n$ into $m$ parts 
  with difference at least three at distance two 
  such that if two successive parts differ by at most one, 
  then their sum is $\equiv 1 \pmod{3}$.  
  
  $kr^b_{4-2}(n, m)$ is the number of partitions of $n$ into $m$ parts 
  with at most one occurrence of the part 1, 
  and difference at least three at distance two 
  such that if two successive parts differ by at most one, 
  then their sum is $\equiv 2 \pmod{3}$.  
  
  Then, 
  \begin{align*}
    \sum_{n, m \geq 0} kr^b_1(n, m) q^n x^m 
    & = \sum_{n_1, n_2 \geq 0} \frac{ q^{3n_2^2 + n_2 + n_1^2 + 3n_1n_2} x^{2n_2 + n_1} }
      { (q; q)_{n_1} (q^3; q^3)_{n_2} }, 
  \end{align*}
  and 
  \begin{align*}
    \sum_{n, m \geq 0} kr_{4-2}(n, m) q^n x^m 
    & = \sum_{n_1, n_2 \geq 1} \frac{ q^{3n_2^2 + 2n_2 + n_1^2 + n_1 + 3n_1n_2-1} x^{2n_2 + n_1} }
      { (q; q)_{n_1} (q^3; q^3)_{n_2} } \\ 
    & + \sum_{n_2 \geq 0} \frac{ q^{3n_2^2 + 2n_2} x^{2n_2} }
      { (q^3; q^3)_{n_2} } 
    + \sum_{n_1 \geq 1} \frac{ q^{n_1^2} x^{n_1} }
      { (q; q)_{n_1} }.  
  \end{align*}
\end{theorem}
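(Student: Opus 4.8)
The plan is to obtain both identities from theorems already proved, using the uniform shift-on-parts device of Theorem~\ref{thmKR4}; no new base partitions, clusters, or moves are needed.

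For $kr^b_1$ I would add $1$ to every part of a partition counted by $kr^b_1(n,m)$. This leaves every pairwise difference unchanged, hence preserves the ``difference at least three at distance two'' condition and, since a uniform translation carries the Gordon marking of value $a$ to the Gordon marking of value $a+1$, preserves the entire Gordon marking and the cluster decomposition; it raises the smallest part from $\geq 1$ to $\geq 2$; and it adds $2$ to the sum of each $2$-cluster, moving its residue from $1$ to $0$ modulo $3$. Thus the image is a partition of $n+m$ into $m$ parts counted by $kr_2$, and, since partitions counted by $kr_2$ have all parts $\geq 2$, subtracting $1$ from every part of such a partition loses no part, needs no re-marking, and returns a partition counted by $kr^b_1$. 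Hence $kr^b_1(n,m)=kr_2(n+m,m)$. Summing, $\sum_{n,m}kr^b_1(n,m)\,q^n x^m$ equals the right-hand side of \eqref{eqGF_KR2} with $x$ replaced by $x/q$; since the exponent of $x$ there is $2n_2+n_1$, this substitution subtracts $2n_2+n_1$ from each $q$-exponent, turning $3n_2^2+3n_2+n_1^2+n_1+3n_1n_2$ into $3n_2^2+n_2+n_1^2+3n_1n_2$, which is the claimed series.

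For $kr_{4-2}$ the same argument applies with a shift by $2$: adding $2$ to every part turns ``at most one occurrence of the part $1$'' into ``at most one occurrence of the part $3$'' with all parts $\geq 3$, preserves the difference condition and the Gordon marking as above, and adds $4\equiv 1\pmod 3$ to each $2$-cluster sum, moving its residue from $2$ to $0$; this is exactly the class of partitions enumerated in Theorem~\ref{thmKR3-1}, so $kr_{4-2}(n,m)=kr_{3-1}(n+2m,m)$. Therefore $\sum_{n,m}kr_{4-2}(n,m)\,q^n x^m$ is obtained from the three-term series of Theorem~\ref{thmKR3-1} by the substitution $x\mapsto x/q^2$: in the first term the $x$-exponent $2n_2+n_1$ removes $2(2n_2+n_1)$ from the $q$-exponent, in the second the $x$-exponent $2n_2$ removes $4n_2$, and in the third the $x$-exponent $n_1$ removes $2n_1$, yielding $3n_2^2+2n_2+n_1^2+n_1+3n_1n_2-1$, $3n_2^2+2n_2$, and $n_1^2$ respectively, as stated.

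The exponent bookkeeping above is routine; the point that needs care is checking that each shift is a genuine bijection in both directions, i.e. that a uniform translation never deletes or produces a part of a forbidden value, never forces the Gordon marking or the cluster decomposition to be recomputed, and sends each ``if two successive parts differ by at most one then their sum is $\equiv c\pmod 3$'' clause to the intended new residue. For the $kr_{4-2}$ case one should in addition confirm that the congruence class and the lower bound on the smallest part occurring in the statement of Theorem~\ref{thmKR3-1} are read so that a downward shift by $2$ recovers precisely the conditions defining $kr_{4-2}$; once that matching is verified the two identities follow with no further work.
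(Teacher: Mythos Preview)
Your approach is correct and is exactly the paper's: reduce by a uniform shift of all parts to Theorems~\ref{thmKR2} and~\ref{thmKR3-1}, then read off the generating functions by the substitution $x\mapsto x/q$ (resp.\ $x\mapsto x/q^{2}$). In fact your bijection is stated more carefully than the paper's sketch: the paper writes $kr^{b}_{1}(n+m,m)=kr_{2}(n,m)$ and $kr_{4-2}(n+2m,m)=kr_{3-1}(n,m)$, but the shift goes the other way, i.e.\ $kr^{b}_{1}(n,m)=kr_{2}(n+m,m)$ and $kr_{4-2}(n,m)=kr_{3-1}(n+2m,m)$, as you have it and as the resulting series confirm; your closing caveat about reading the conditions in Theorem~\ref{thmKR3-1} as ``smallest part at least three, at most one occurrence of $3$, sum $\equiv 0\pmod 3$'' is also well placed, since that is what the base partitions in its proof actually encode.
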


\begin{proof}
 It suffices to see that $kr^b_1(n+m, m) = kr_2(n, m)$, 
 and that $kr_{4-2}(n+2m, m) = kr_{3-1}(n, m)$.  
 Then the results become corollaries of Theorems \ref{thmKR2} and \ref{thmKR3-1}, respectively.  
\end{proof}

We conclude this section with one last example.  

\begin{theorem}
\label{thmKRb1-1}
  For $n, m \in \mathbb{N}$, 
  let $kr^b_{1-1}(n, m)$ be the number of partition of $n$ into $m$ parts 
  with at most one occurrence of the part 2, 
  and with difference at least three at distance two 
  such that if two successive parts differ by at most one, 
  then their sum is $\equiv 1 \pmod{3}$.  
  Then, 
  \begin{align*}
    \sum_{n, m \geq 0} kr^b_{1-1}(n, m) q^n x^m 
    & = \sum_{\substack{n_1 \geq 0 \\ n_2 \geq 1} }
	  \frac{ q^{3n_2^2 + 4n_2 + n_1^2 + n_1 + 3n_1n_2} x^{2n_2 + n_1} }
	    { (q; q)_{n_1} (q^3; q^3)_{n_2} } \\
    & + \sum_{\substack{n_1 \geq 0 \\ n_2 \geq 1}} 
	  \frac{ q^{3n_2^2 + 4n_2 + (n_1+1)^2 + 3n_1n_2} x^{2n_2 + n_1+1} }
	    { (q; q)_{n_1} (q^3; q^3)_{n_2} } 
    + \sum_{n_1 \geq 0} \frac{ q^{n_1^2} x^{n_1} }
      { (q; q)_{n_1} }.  
  \end{align*}
\end{theorem}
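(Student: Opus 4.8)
The plan is to reduce this to Theorem \ref{thmMS1-4}, exactly the way Theorem \ref{thmMS1-4} was reduced to Theorems \ref{thmKR2} and \ref{thmKR3-1}, but with a twist because the restriction here is on the part $2$ rather than on the part $1$. A partition counted by $kr^b_{1-1}(n,m)$ either does \emph{not} contain the part $1$ at all, or contains the part $1$ exactly once (it cannot contain $1$ twice, since then $1_1$ and $1_2$ would be a $2$-cluster with sum $2 \not\equiv 1 \pmod 3$), or — wait, the constraint is on the part $2$, so the correct case split is: $\lambda$ contains no part $1$; or $\lambda$ contains the part $1$ but no part $2$ after it forces a violation; or $\lambda$ contains exactly one $2$. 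Concretely I would split on whether $1$ appears and whether a $2$-cluster sits at the bottom.

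First I would handle the generic case. If $\lambda$ is counted by $kr^b_{1-1}(n,m)$, has $n_2 \geq 1$ two-clusters and $n_1$ one-clusters, and its smallest cluster is a $2$-cluster, then subtracting $1$ from every part is not directly available (it would create a part $0$ or disturb the bottom), so instead I would argue that such a $\lambda$ with $n_2\ge 1$ is in weight-preserving bijection, after a shift of the top $n_1$ one-clusters and an appropriate shift of all parts, with a partition counted by $kr^b_1$. The cleanest route: show $kr^b_{1-1}(n,m)$, restricted to partitions containing exactly one $2$ (equivalently, whose bottom cluster is a $1$-cluster equal to $2$), equals $kr^b_1(n',m')$ for a suitable linear change of variables, and separately that the partitions containing no $1$ and no $2$, again with $n_2 \geq 1$, match a second copy of $kr^b_1$ after subtracting $2$ from everything. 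These two subfamilies should produce the first two sums in the statement — note the two sums differ precisely by replacing $n_1$ by $n_1+1$ and adding $n_1+1$ (one extra one-cluster, namely the bottom $2$, plus the shift), which is the signature of "an extra smallest $1$-cluster has been appended," exactly as in Theorem \ref{thmKR2}'s correction term.

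Next I would dispose of the degenerate case $n_2 = 0$. If there are no $2$-clusters, the partition has only $1$-clusters, i.e.\ it has difference at least two between consecutive parts (distinct parts), with the extra rule about sums $\equiv 1\pmod 3$ being vacuous (no two successive parts differ by at most one once $n_2=0$), and at most one occurrence of $2$ is automatic among distinct parts. So this subfamily is just partitions into $n_1$ distinct positive parts; its base partition is $1 + 2 + \cdots + n_1$, of weight $\binom{n_1+1}{2}$ — hmm, but the stated third sum is $\sum_{n_1\ge 0} q^{n_1^2} x^{n_1}/(q;q)_{n_1}$, whose base weight is $n_1^2 = 1+3+5+\cdots+(2n_1-1)$, i.e.\ partitions into distinct parts each $\geq$ its predecessor plus $2$. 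That matches: when $n_2=0$, consecutive $1$-clusters must differ by at least two by Gordon marking, so the minimal configuration is $1,3,5,\dots,2n_1-1$, giving weight $n_1^2$, and $\mu$ is generated by $1/(q;q)_{n_1}$. Good — the third sum is confirmed and needs only the observation that $kr^b_{1-1}$ with $n_2=0$ coincides with $kr^b_1$ with $n_2=0$ since the part-$2$ restriction is inactive.

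The main obstacle I expect is the generic case: verifying that the forward/backward moves on the bottom $1$-cluster (the $2$, when present) can be pushed through all the $2$-clusters — the \emph{prestidigitation} of Theorem \ref{thmKR2} — still works when the congruence target is $1$ rather than $0\pmod 3$, and that the resulting base partitions are genuinely of minimal weight with the prescribed cluster counts. I would verify minimality by the same local argument used throughout: check that any interchange of a $1$-cluster past a $2$-cluster, or any shift of a $2$-cluster down, strictly increases the weight subject to the difference and congruence conditions, and that the two base partitions exhibited (one for "bottom is the unique $2$", one for "no $1$ and no $2$, $n_2\ge 1$") have weights $3n_2^2+4n_2+n_1^2+n_1+3n_1n_2$ and $3n_2^2+4n_2+(n_1+1)^2+3n_1n_2$ respectively, as the exponents in the first two sums demand. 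Once the base partitions and the move-structure are in place, the bijection $\lambda \leftrightarrow (\beta,\mu,\eta)$ and the generating-function bookkeeping are identical to the proof of Theorem \ref{thmKR1}, so I would simply invoke that argument \emph{mutatis mutandis} rather than repeat it.
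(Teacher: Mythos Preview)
Your handling of the $n_2=0$ case is correct and matches the paper.  The problem is your case split for $n_2\ge 1$.

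You settle on two subfamilies: (a) partitions whose smallest part is the unique~$2$, and (b) partitions containing neither a~$1$ nor a~$2$.  But (a)$\cup$(b) is exactly the set of $kr^b_{1-1}$ partitions with $n_2\ge 1$ and \emph{no part equal to~$1$}.  You have omitted the partitions with $n_2\ge 1$ that \emph{do} contain a~$1$.  Such partitions exist: the part~$1$ can appear at most once (since $1+1\equiv 2$), and if it appears then~$2$ cannot (since $1+2\equiv 0$), so the restriction ``at most one~$2$'' is vacuous there --- e.g.\ $1+3+4$ is counted by $kr^b_{1-1}(8,3)$ with $n_2=1$.  Ironically, you wrote down precisely this dichotomy (``contains~$1$ or not'') in your first sentence before talking yourself out of it.

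Consequently your assignment of the two sums is wrong.  Both of your subfamilies (a) and (b) together constitute the paper's case~(ii), ``$n_2\ge 1$, no~$1$'', and that single case is what produces the \emph{first} sum, with base weight $3n_2^2+4n_2+n_1^2+n_1+3n_1n_2$.  The \emph{second} sum, with its exponent $3n_2^2+4n_2+(n_1+1)^2+3n_1n_2$ and its power $x^{2n_2+n_1+1}$, comes from the paper's case~(iii), ``$n_2\ge 1$, contains a~$1$'': one fixes the part~$1$ in place, lets $n_1$ count the \emph{remaining} $1$-clusters, and uses the base partition
\[
\left\{\; 1 \;\; \begin{array}{cc}&4\\3&\end{array}\; \begin{array}{cc}&7\\6&\end{array}\;\cdots\;\begin{array}{cc}&3n_2+1\\3n_2&\end{array}\;\; 3n_2+3\;\; 3n_2+5\;\;\cdots\;\; 3n_2+2n_1+1 \;\right\}.
\]
The extra ``$+1$'' in both the $x$-power and the squared term is the frozen part~$1$, not an appended bottom~$2$.  (A quick sanity check on your version: your subfamily~(b), after subtracting~$2$ from every part, lands in $kr_1$-land, since $a+b\equiv 1$ becomes $(a-2)+(b-2)\equiv 0$, not in $kr^b_1$; the resulting base weight is $3n_2^2+4n_2+n_1^2+2n_1+3n_1n_2$ with $x^{2n_2+n_1}$, which is neither of the two target sums.)

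So the fix is: keep your $n_2=0$ case; for $n_2\ge 1$ split on whether~$1$ appears; and in each subcase exhibit the minimal-weight base partition directly (the paper does this, with a further subdivision of the ``no~$1$'' case according to $n_1=0,1,\ge 2$, each requiring its own base partition and, for $n_1\ge 1$, a prestidigitation of the smallest one or two $1$-clusters through the $2$-clusters on their first forward move).
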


The enumerant $kr^b_{1-1}(n, m)$ is brought to our attention by Alexander Berkovich.  
It is unusual in the sense that the number of occurrences 
is not restricted for the smallest admissible part, but for a larger one.  
We include it here to demonstrate the fact that the method may treat 
virtually all possible extra conditions on the first so many parts 
on top of the general difference conditions.  

\begin{proof}
 The proof is reminiscent of that of Theorem \ref{thmKR3-1}.  
 We need base partitions $\beta$ for several cases.  
 Below, $\lambda$ is a partition enumerated by $kr^b_{1-1}(n, m)$, 
 and $n_r$ is the number of $r$-clusters for $r = 1, 2$.  
 \begin{enumerate}[{\bf (i)}]
  \item $\lambda$ has no 2-clusters, i.e. $n_2 = 0$, 
  \item $\lambda$ has at least one 2-cluster, but no 1's,   
  \item $\lambda$ has at least one 2-cluster, and a 1.  
 \end{enumerate}
 
 In case {\bf (i)}, the base partition $\beta$ obviously is 
 \begin{align*}
  \left\{ 
    \begin{array}{c} 1 \end{array}
    \begin{array}{c} 3 \end{array}
    \begin{array}{c} \cdots \end{array}
    \begin{array}{c} 2n_1 - 1\end{array}
  \right\}
    \begin{array}{c} , \end{array}
 \end{align*}
 with weight $n_1^2$.  
 
 In case {\bf (ii)}, the base partitions $\beta$ are 
 \begin{align*}
  \left\{
    \begin{array}{cc} & 4 \\ 3 & \end{array}
    \begin{array}{cc} & 7 \\ 6 & \end{array}
    \begin{array}{cc} \\ \cdots \end{array}
    \begin{array}{cc} & 3n_2+1 \\ 3n_2 & \end{array}
  \right\}
 \end{align*}
 when $n_1 = 0$, 
 \begin{align}
 \label{baseptnKRb1-1_2}
  \left\{
    \begin{array}{cc} \\ 2 \end{array}
    \begin{array}{cc} 5 \\ 5 \end{array}
    \begin{array}{cc} 8 \\ 8 \end{array}
    \begin{array}{cc} \\ \cdots \end{array}
    \begin{array}{cc} 3n_2+2 \\ 3n_2+2 \end{array}
  \right\}
 \end{align}
 when $n_1 = 1$, 
 \begin{align}
 \label{baseptnKRb1-1_3}
  \left\{
    \begin{array}{cc} \\ 2 \end{array}
    \begin{array}{cc} \\ 4 \end{array}
    \begin{array}{cc} & 7 \\ 6 & \end{array}
    \begin{array}{cc} & 10 \\ 9 & \end{array}
    \begin{array}{cc} \\ \cdots \end{array}
    \begin{array}{cc} & 3n_2+4 \\ 3n_2+3 & \end{array}
    \begin{array}{cc} \\ 3n_2+6 \end{array}
    \begin{array}{cc} \\ 3n_2+8 \end{array}
    \begin{array}{cc} \\ \cdots \end{array}
    \begin{array}{cc} \\ 3n_2+2n_1 \end{array}
  \right\}
 \end{align}
  when $n_1 \geq 2$.  
  The weights of all three partitions above are $3n_2^2 + 4n_2 + n_1^2 + n_1 + 3n_2n_1$.  
  In \eqref{baseptnKRb1-1_2}, the initial forward move on the smallest 1-cluster, 
  and in \eqref{baseptnKRb1-1_3}, the initial forward moves on the two smallest 1-clusters 
  involve prestidigitating the said 1-clusters through the 2-clusters, if any.  
  
  In case {\bf (iii)}, the base partition is 
 \begin{align}
 \label{baseptnKRb1-1_4}
  \left\{
    \begin{array}{cc}  \\ 1 \end{array}
    \begin{array}{cc} & 4 \\ 3 & \end{array}
    \begin{array}{cc} & 7 \\ 6 & \end{array}
    \begin{array}{cc} \\ \cdots \end{array}
    \begin{array}{cc} & 3n_2+1 \\ 3n_2 & \end{array}
    \begin{array}{cc}  \\ 3n_2+3 \end{array}
    \begin{array}{cc}  \\ 3n_2+5 \end{array}
    \begin{array}{cc}  \\ \cdots \end{array}
    \begin{array}{cc}  \\ 3n_2+2n_1+1 \end{array}
  \right\}
    \begin{array}{cc}  \\ . \end{array}
 \end{align}
 Here, we leave the part 1 where it is, 
 and set $n_1 = $ the number of 1-clusters except the part 1.  
 In other words, we do not perform any forward moves on the part 1.  
\end{proof}

{\bf Remark: } One can also argue that $kr^b_{1-1}(n, m) = A(n, m) + B(n, m)$, 
where $A(n, m)$ is the set of partitions enumerated by $kr^b_{1-1}(n, m)$ which contain a 1, 
and $B(n, m)$ is those which do not.  
Then, one can establish $A(n, m) = kr_1(n+2m, m)$ by deleting 1 from the said partitions, 
and $B(n+m, m) = kr^b_{1-3}(n, m)$ to obtain
\begin{align*}
  & \sum_{n, m \geq 0} kr^b_{1-1}(n, m) q^n x^m 
  = \sum_{n_1, n_2 \geq 0}
	\frac{ q^{3n_2^2 + 4n_2 + (n_1+1)^2 + n_1 + 3n_1n_2} x^{2n_2 + n_1+1} }
	  { (q; q)_{n_1} (q^3; q^3)_{n_2} } \\
  & + \sum_{n_1, n_2 \geq 1} 
	\frac{ q^{3n_2^2 + 4n_2 + n_1^2 + 3n_1n_2-1} x^{2n_2 + n_1} }
	  { (q; q)_{n_1} (q^3; q^3)_{n_2} } 
  + \sum_{n_2 \geq 0} \frac{ q^{3n_2^2+4n_2} x^{2n_2} }
    { (q^3; q^3)_{n_2} } 
  + \sum_{n_1 \geq 1} \frac{ q^{n_1^2} x^{n_1} }
    { (q; q)_{n_1} }.  
\end{align*}
It is a simple matter to show the equivalence of the above identity to the 
combination of multiple series in Theorem \ref{thmMS1-4}, 
once one knows the combinatorics behind.  

Yet a third way to obtain another alternative is to exclude 
the partitions counted by $kr^b_1(n, m)$ which have the 2-cluster 
$\begin{array}{c} 2 \\ 2 \end{array}$ using $kr_{3-1}(n, m)$.  
However, we do not favor inclusion-exclusion in this note.  

  
  {\bf Example: } 
  Following the notation in the section so far, 
  we will decode the partition $\lambda$ enumerated by $kr^b_{1-1}(62, 7)$ below 
  into $(\beta, \mu, \eta)$.  
  \begin{align*}
   \left\{
   \begin{array}{cc} \\ 1 \end{array} \right. 
   \begin{array}{cc} & \mathbf{7} \\ \mathbf{6} & \end{array}
   \begin{array}{cc} \\ 9 \end{array}
   \begin{array}{cc} \\ 11 \end{array}
   \left. \begin{array}{cc} 14 \\ 14 \end{array}
   \right\}
  \end{align*}
  Obviously, we are in the case {\bf (iii)} of the above proof.  
  $\lambda$ has $n_2 = 2$ 2-clusters, $n_1 = 2$ 1-clusters, and a 1.  
  We stow the smaller 2-cluster first, and record $\eta_1$ 
  as three times the performed number of moves.  
  \begin{align*}
    \big\downarrow \textrm{ one backward move on the smaller 2-cluster }
  \end{align*}
  \begin{align*}
   \left\{
   \begin{array}{cc} \\ 1 \end{array} \right. 
   \begin{array}{cc} \mathbf{5} \\ \mathbf{5} \end{array}
   \begin{array}{cc} \\ 9 \end{array}
   \begin{array}{cc} \\ 11 \end{array}
   \left. \begin{array}{cc} 14 \\ 14 \end{array}
   \right\}
  \end{align*}
  \begin{align*}
    \big\downarrow \textrm{ one more backward move on the smaller 2-cluster }
  \end{align*}
  \begin{align*}
   \left\{
   \begin{array}{cc} \\ 1 \end{array} \right. 
   \begin{array}{cc} & 4 \\ 3 & \end{array}
   \begin{array}{cc} \\ 9 \end{array}
   \begin{array}{cc} \\ 11 \end{array}
   \left. \begin{array}{cc} \mathbf{14} \\ \mathbf{14} \end{array}
   \right\}  
  \end{align*}
  At this point, we have $\eta_1 = 6$.  
  \begin{align*}
    \big\downarrow \textrm{ one backward move on the larger 2-cluster }
  \end{align*}
  \begin{align*}
   \left\{
   \begin{array}{cc} \\ 1 \end{array} \right. 
   \begin{array}{cc} & 4 \\ 3 & \end{array}
   \begin{array}{cc} \\ 9 \end{array}
   \underbrace{
   \begin{array}{cc} \\ 11 \end{array}
   \begin{array}{cc} & \mathbf{13} \\ \mathbf{12} & \end{array}
   }_{!} 
   \left. \begin{array}{cc} \\ \phantom{0} \end{array} \right\}
  \end{align*}
  \begin{align*}
    \big\downarrow \textrm{ adjustment }
  \end{align*}
  \begin{align*}
   \left\{
   \begin{array}{cc} \\ 1 \end{array} \right. 
   \begin{array}{cc} & 4 \\ 3 & \end{array}
   \underbrace{
   \begin{array}{cc} \\ 9 \end{array}
   \begin{array}{cc} \mathbf{11} \\ \mathbf{11} \end{array}
   }_{!} 
   \begin{array}{cc} \\ 14 \end{array}
   \left. \begin{array}{cc} \\ \phantom{0} \end{array} \right\}
  \end{align*}
  \begin{align*}
    \big\downarrow \textrm{ adjustment }
  \end{align*}
  \begin{align*}
   \left\{
   \begin{array}{cc} \\ 1 \end{array} \right. 
   \begin{array}{cc} & 4 \\ 3 & \end{array}
   \begin{array}{cc} & \mathbf{10} \\ \mathbf{9} & \end{array}
   \begin{array}{cc} \\ 12 \end{array}
   \left. \begin{array}{cc} \\ 14 \end{array}
   \right\}
  \end{align*}
  \begin{align*}
    \big\downarrow \textrm{ two more backward moves on the larger 2-cluster }
  \end{align*}
  \begin{align*}
   \left\{
   \begin{array}{cc} \\ 1 \end{array} \right. 
   \begin{array}{cc} & 4 \\ 3 & \end{array}
   \begin{array}{cc} & 7 \\ 6 & \end{array}
   \begin{array}{cc} \\ \mathbf{12} \end{array}
   \left. \begin{array}{cc} \\ 14 \end{array}
   \right\}
  \end{align*}
  Now we have $\eta = 6+9$.  
  Decoding the backward moves on the 1-clusters is easier.  
  It is obvious that $\mu = 3+3$, 
  and once we perform that many backward moves on the respective 1-clusters, 
  we arrive at \eqref{baseptnKRb1-1_4}.  
  \begin{align*}
   \left\{
   \begin{array}{cc} \\ 1 \end{array} \right. 
   \begin{array}{cc} & 4 \\ 3 & \end{array}
   \begin{array}{cc} & 7 \\ 6 & \end{array}
   \begin{array}{cc} \\ 9 \end{array}
   \left. \begin{array}{cc} \\ 11 \end{array}
   \right\}
  \end{align*} 
  The sum of weights also check.  
  \begin{align*}
   \vert \lambda \vert 
   = 62 = 41 + 6 + 15
   = \vert \beta \vert + \vert \mu \vert + \vert \eta \vert 
  \end{align*}
  

\section{Kanade and Russell's Conjectures 5-6 and Some Missing Cases}
\label{secKR5-6}

\begin{theorem}[cf. Kanade-Russell Conjecture $I_5$]
\label{thmKR5}
  For $m, n \in \mathbb{N}$, 
  let $kr_5(m,n)$ be the number of partitions of $n$ into $m$ parts, 
  with at most one occurrence of the part 1, 
  and difference at least three at distance three 
  such that is parts at distance two differ by at most 1, 
  then their sum, together with the intermediate part, is $\equiv 1 \pmod{3}$.  
  Then, 
  \begin{align}
  \nonumber
  & \sum_{m,n \geq 0} kr_5(n,m) q^n x^m \\
  \label{eqGF_KR5}
  = & \sum_{n_1, n_2, n_3 \geq 0} \frac{ q^{ ( 9n_3^2 + 5n_3 )/2 + 2n_2^2 + n_2 + n_1^2 + 6n_3n_2 + 3n_3n_1 + 2n_2n_1 } 
      (-q; q^2)_{n_2} x^{3n_3 + 2n_2 + n_1} }
    { (q; q)_{n_1} (q^2; q^2)_{n_2} (q^3; q^3)_{n_3} }.  
  \end{align}
\end{theorem}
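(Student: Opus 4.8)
The plan is to carry out the cluster-and-base-partition construction of Theorem~\ref{thmKR1}, now with three sizes of cluster. First I would note that a partition $\lambda$ counted by $kr_5(n,m)$ has no $r$-cluster for $r\ge4$: an $r$-cluster contains an $r$-marked part, which forces difference at most one at distance $r-1$, impossible under ``difference at least three at distance three.'' Hence the Gordon marking splits $\lambda$ into $1$-, $2$- and $3$-clusters; let $n_1,n_2,n_3$ be their numbers, so $l(\lambda)=3n_3+2n_2+n_1=m$. I would then build a weight- and length-preserving bijection $\lambda\leftrightarrow(\beta,\mu,\nu,\eta)$, where $\beta$ is the minimal-weight partition with exactly $n_3$ three-clusters, $n_2$ two-clusters and $n_1$ one-clusters satisfying the $kr_5$ constraints; $\mu$ is a partition into at most $n_1$ parts; $\nu$ is a partition into at most $n_2$ parts in which no odd part repeats; $\eta$ is a partition into at most $n_3$ parts, each a multiple of $3$; and $|\lambda|=|\beta|+|\mu|+|\nu|+|\eta|$. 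Given this, $\mu$ is generated by $1/(q;q)_{n_1}$, $\eta$ by $1/(q^3;q^3)_{n_3}$, and --- this is where the numerator $(-q;q^2)_{n_2}$ enters --- $\nu$ is generated by $(-q;q^2)_{n_2}/(q^2;q^2)_{n_2}$ by Proposition~\ref{propDistinctOdds}. Summing $q^{|\beta|}x^{l(\beta)}$ against these three factors over $n_1,n_2,n_3\ge0$ produces the right-hand side of \eqref{eqGF_KR5}.

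Pinning down $\beta$ comes next. I would stack the $n_3$ three-clusters first, each as small as the distance-three difference condition and the mod $3$ triple-sum rule allow, then the $n_2$ two-clusters, then the $n_1$ one-clusters, each placed as low as possible under those conditions and the ``at most one part equal to $1$'' restriction; as in Theorems~\ref{thmKR2} and \ref{thmKR3-1} this may require a few cases according to which of $n_1,n_2,n_3$ vanish, but all of them yield the single weight polynomial $|\beta|=(9n_3^2+5n_3)/2+2n_2^2+n_2+n_1^2+6n_3n_2+3n_3n_1+2n_2n_1$ after summing the arithmetic-progression blocks, and one checks that every other interleaving of clusters is strictly heavier.

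For the dynamics, given $(\beta,\mu,\nu,\eta)$ I would, in this order, move the $i$th largest one-cluster forward by the $i$th largest part of $\mu$, the $i$th largest two-cluster forward by the $i$th largest part of $\nu$, and the $i$th largest three-cluster forward by one third of the $i$th largest part of $\eta$, always matching larger increments to larger clusters so that gaps cannot shrink. A forward step adds $1$ to the weight for a one-cluster and $3$ for a three-cluster (hence $\eta$ has parts divisible by $3$, and such a step respects the triple-sum residue). A move on a two-cluster is the subtle case: it occupies one of the shapes $\{a,a\}$ or $\{a,a+1\}$, the mod $3$ rule admits only certain residues, and cycling it forward through the admissible positions changes the weight by $1$ when the shape flips and by $2$ when both parts advance; the net effect is that the realizable weight gains of a single two-cluster are exactly the parts of a partition in which no odd value repeats --- this is the combinatorial content of the factor $(-q;q^2)_{n_2}/(q^2;q^2)_{n_2}$. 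Executing an odd gain requires ``prestidigitating'' the two-cluster past its neighbouring one- and three-clusters (the metaphor and the bookkeeping are in the appendix), and, as in Theorem~\ref{thmKR1}, moves may trigger local, weight-neutral ``adjustments''; since both the distance-three difference condition and the distance-two mod $3$ condition are checkable locally, every move keeps $\lambda$ in the $kr_5$ class. The inverse map reads $n_1,n_2,n_3$ off $\lambda$, slides the smallest three-clusters, then the smallest two-clusters, then the smallest one-clusters back as far as possible in the reverse order, records the numbers of moves as one third of the parts of $\eta$, the parts of $\nu$, and the parts of $\mu$, and leaves behind $\beta$; the forward and backward rules are built to be mutual inverses, which yields the bijection and hence \eqref{eqGF_KR5}.

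The main obstacle will be the two-cluster analysis together with the minimality of $\beta$. Concretely, one must catalogue every local configuration in which a two-cluster abuts a one- or three-cluster, show that an even weight gain is always realizable while each odd gain is realizable exactly once --- forcing precisely the prestidigitation described --- and check that both difference conditions survive every move; this is a longer version of the case analysis in the proof of Theorem~\ref{thmKR1}. The ``at most one part equal to $1$'' restriction adds the wrinkle, familiar from Theorem~\ref{thmKR2}, that the smallest one-cluster's first forward move has to be routed through all the two- and three-clusters, and accounting for the weight of that routing is what ultimately produces the terms linear in $n_1,n_2,n_3$ in the exponent of $q$ in \eqref{eqGF_KR5}.
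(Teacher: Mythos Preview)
Your overall plan matches the paper's, but two concrete pieces are wrong, and the first one is fatal for the argument as written.

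\textbf{The base partition.} You propose to ``stack the $n_3$ three-clusters first, then the $n_2$ two-clusters, then the $n_1$ one-clusters.'' This is precisely the na\"{i}ve guess the paper tries and rejects. For $n_1=n_2=n_3=2$ the paper computes that the ordering $3$-clusters, $2$-clusters, $1$-clusters gives weight $86$, while the correct minimal configuration has weight $80$. The actual base partition \eqref{baseptnKR5-n1positive}--\eqref{baseptnKR5-n1zero} is, counterintuitively, $2$-clusters first, then (if $n_1>0$) a single $1$-cluster, then the block of $3$-clusters, then the remaining $n_1-1$ one-clusters. One has to verify that this particular interleaving gives the claimed exponent $(9n_3^2+5n_3)/2+2n_2^2+n_2+n_1^2+6n_3n_2+3n_3n_1+2n_2n_1$, and that every other interleaving is heavier. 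Your proposed $\beta$ does not have this weight, so the generating-function identity would not follow from your construction. This non-obvious ordering is also the reason for the prestidigitation: the first forward move on the smallest $1$-cluster (and on each $2$-cluster whose $\eta$-part is positive) has to thread it through the entire block of $3$-clusters before it reaches the $1$- and $2$-clusters on the far side; this is not tied to parity of the gain, contrary to what you wrote.

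\textbf{The role of $(-q;q^2)_{n_2}$.} Your explanation --- that a single $2$-cluster sometimes gains $1$ and sometimes $2$, so its weight gains form a partition with no repeated odd part --- is not what happens. In $kr_5$ there is no mod $3$ constraint on an isolated pair; every forward move of the $2$nd kind on a $2$-cluster adds exactly $1$ to the weight, so a single $2$-cluster can realize any nonnegative gain. The restriction is a \emph{pairwise} one: after the initial prestidigitation, two consecutive $2$-clusters that have each been moved the same odd number of times land as
\[
\begin{array}{cc} k \\ k \end{array}\quad \begin{array}{cc} k+2 \\ k+2 \end{array},
\]
which violates difference at least three at distance three, whereas equal even numbers of moves leave them as $\{k,k+1\},\{k+2,k+3\}$, which is fine. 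Hence the vector of $2$-cluster gains is exactly a partition into at most $n_2$ parts with no repeated odd part, and Proposition~\ref{propDistinctOdds} then supplies $(-q;q^2)_{n_2}/(q^2;q^2)_{n_2}$. You will need this argument, not the one you sketched, to justify the numerator.
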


\begin{proof}
 Throughout the proof, $n_r$ will denote the number of $r$-clusters for $r = 1, 2, 3$.  
 $\lambda$ will denote a partition enumerated by $kr_5(n,m)$.  
 We will follow the idea of proof in Theorem \ref{thmKR1}, but there are more intricacies.  
 Construction of the base partition is a major part.  
 
 The base partition when $n_1 > 0$ is 
 \begin{align}
 \nonumber
  & \left\{ \begin{array}{cc} & \\ & 2 \\ 1 & \end{array}
  \begin{array}{cc} & \\ & 4 \\ 3 & \end{array}
  \begin{array}{cc} \\ \\ \cdots \end{array} 
  \begin{array}{cc} & \\ & 2 n_2 \\ 2n_2-1 & \end{array}
  \begin{array}{cc} \\ \\ 2n_2+1 \end{array}
  \begin{array}{cc} & 2n_2+4 \\ 2n_2+3 & \\ 2n_2+3 & \end{array}
  \begin{array}{cc} & 2n_2+7 \\ 2n_2+6 & \\ 2n_2+6 & \end{array} \right. \\[10pt]
 \label{baseptnKR5-n1positive}
  \begin{array}{cc} \\ \\ \cdots \end{array} 
  & \begin{array}{cc} & 2n_2+3n_3+1 \\ 2n_2+3n_3 & \\ 2n_2+3n_3 & \end{array}  
  \begin{array}{cc} \\ \\ 2n_2+3n_3+3 \end{array}
  \begin{array}{cc} \\ \\ 2n_2+3n_3+5 \end{array} \\[10pt]
  \nonumber
  \begin{array}{cc} \\ \\ \cdots \end{array} 
  & \left. \begin{array}{cc} \\ \\ 2n_2+3n_3+2n_1-1 \end{array} \right\}, 
 \end{align}
 and when $n_1 = 0$ it is
 \begin{align}
 \nonumber
  & \left\{ \begin{array}{cc} & \\ & 2 \\ 1 & \end{array}
  \begin{array}{cc} & \\ & 4 \\ 3 & \end{array}
  \begin{array}{cc} \\ \\ \cdots \end{array} 
  \begin{array}{cc} & \\ & 2 n_2 \\ 2n_2-1 & \end{array}
  \begin{array}{cc} & 2n_2+3 \\ 2n_2+2 & \\ 2n_2+2 & \end{array}
  \begin{array}{cc} & 2n_2+6 \\ 2n_2+5 & \\ 2n_2+5 & \end{array} \right. \\[10pt]
 \label{baseptnKR5-n1zero}
  \begin{array}{cc} \\ \\ \cdots \end{array} 
  & \begin{array}{cc} & 2n_2+3n_3 \\ 2n_2+3n_3-1 & \\ 2n_2+3n_3-1 & \end{array}  
  \begin{array}{cc} \\ \\ 2n_2+3n_3+2 \end{array}
  \begin{array}{cc} \\ \\ 2n_2+3n_3+4 \end{array} \\[10pt]
  \nonumber
  \begin{array}{cc} \\ \\ \cdots \end{array} 
  & \left. \begin{array}{cc} \\ \\ 2n_2+3n_3+2n_1-2 \end{array} \right\}.  
 \end{align}
 The weight of both of them is $( 9n_3^2 + 5n_3 )/2 + 2n_2^2 + n_2 + n_1^2 + 6n_3n_2 + 3n_3n_1 + 2n_2n_1$.  
 
 We have to argue that this is indeed the partition counted by $kr_5(n,m)$ 
 having $n_r$ $r$-clusters for $r = 1,2,3$ and minimal weight.  
 
 If $\lambda$ has a 3-marked part $k$, then there is a 2-marked part $k$ or $k-1$, 
 and a 1-marked part $k$ or $k-1$.  There can be no other parts equal to $k$ or $k-1$ 
 because of the difference at least three at distance three condition.  
 For the same reason, the succeding smallest part can be at least $k+2$, 
 and the preceding smallest part can be at most $k-2$.  
 Among the three possibilities for the 3-clusters, 
 \begin{align*}
  \begin{array}{cc} & k \\ k-1 & \\ k-1 & \end{array}
  \begin{array}{cc}  \\ \\ , \end{array} \qquad
  \begin{array}{cc} & k \\ & k \\ k-1 & \end{array}
  \begin{array}{cc}  \\ \\ \textrm{ and } \end{array} \qquad
  \begin{array}{cc} k \\ k \\ k \end{array}
  \begin{array}{cc}  \\ \\ , \end{array}
 \end{align*}
 which all have difference at most 1 at distance two, 
 the only one satisfying the sum condition, 
 i.e. the sum of the parts, together with the middle part $\equiv 1 \pmod{3}$ is 
 \begin{align*}
  \left\{
  \begin{array}{cc}  \\ \\ ( \textrm{ parts } \leq k-3) \end{array}    
  \begin{array}{cc} & k \\ k-1 & \\ k-1 & \end{array} 
  \begin{array}{cc}  \\ \\ ( \textrm{ parts } \geq k+2) \end{array}
  \begin{array}{cc}  \\ \\ . \end{array}
  \right\}
 \end{align*}
 Therefore, all 3-clusters are of this form.  The preceding cluster can be at most 
 $ 
  \begin{array}{cc} & k-3 \\ k-4 & \\ k-4 & \end{array} 
 $, 
 and the succeeding cluster can be at least
 $ 
  \begin{array}{cc} & k+3 \\ k+2 & \\ k+2 & \end{array} 
 $.  
 Also, a 3-cluster in $\lambda$ can be
 $ 
  \begin{array}{cc} & 3 \\ 2 & \\ 2 & \end{array} 
 $, 
 but not 
 $ 
  \begin{array}{cc} & 2 \\ 1 & \\ 1 & \end{array} 
 $, 
 because at most one occurrence of the part 1 is allowed.  
 This shows that, if a base partition consists of 3-clusters only, it will be 
 \begin{align*}
  \left\{
  \begin{array}{cc} & 3 \\ 2 & \\ 2 & \end{array} 
  \begin{array}{cc} & 6 \\ 5 & \\ 5 & \end{array} 
  \begin{array}{cc}  \\ \\ \cdots \end{array}
  \begin{array}{cc} & 3n_3 \\ 3n_3-1 & \\ 3n_3-1 & \end{array} 
  \begin{array}{cc}  \\ \\ . \end{array}
  \right\}
 \end{align*}
 
 For a moment, suppose that there are no 3-clusters in $\lambda$.  
 Equivalently, there are no 3-marked parts.  
 The 2-clusters will look like
 $ 
  \begin{array}{cc} & k \\ k-1 & \end{array}
 $ or 
 $ \begin{array}{cc} k \\ k \end{array}
 $. 
 Two successive 2-clusters may look like
 \begin{align*}
  \left\{ \begin{array}{cc}  \\ \cdots \end{array}
  \begin{array}{cc} & k \\ k-1 & \end{array}
  \begin{array}{cc} & k+2 \\ k+1 & \end{array}
  \begin{array}{cc}  \\ \cdots \end{array} \right\} \quad
  \begin{array}{cc}  \\ \textrm{ or } \end{array} \quad
  \left\{ \begin{array}{cc}  \\ \cdots \end{array}
  \begin{array}{cc} k \\ k \end{array}
  \begin{array}{cc} & k+3 \\ k+2 & \end{array}
  \begin{array}{cc}  \\ \cdots \end{array} \right\}
  \begin{array}{cc}  \\ , \end{array}
 \end{align*}
 but not
 \begin{align*}
  \left\{ \begin{array}{cc}  \\ \cdots \end{array}
  \begin{array}{cc} k \\ k \end{array}
  \begin{array}{cc} k+2 \\ k+2 \end{array}
  \begin{array}{cc}  \\ \cdots \end{array} \right\}
  \begin{array}{cc}  \\ . \end{array}
 \end{align*}
 In the last instance, the difference at least three at distance three condition is violated.  
 
 1-clusters preceding or succeeding a 2-cluster may look like
 \begin{align*}
  \left\{ \begin{array}{cc}  \\ \cdots \end{array}
  \begin{array}{cc} \\ k-5 \end{array}
  \begin{array}{cc} \\ k-3 \end{array}
  \begin{array}{cc} & k \\ k-1 & \end{array}
  \begin{array}{cc} \\ k+1 \end{array}
  \begin{array}{cc} \\ k+3 \end{array}
  \begin{array}{cc}  \\ \cdots \end{array} \right\}
 \end{align*}
 or 
 \begin{align*}
  \left\{ \begin{array}{cc}  \\ \cdots \end{array}
  \begin{array}{cc} \\ k-4 \end{array}
  \begin{array}{cc} \\ k-2 \end{array}
  \begin{array}{cc} k \\ k \end{array}
  \begin{array}{cc} \\ k+2 \end{array}
  \begin{array}{cc} \\ k+4 \end{array}
  \begin{array}{cc}  \\ \cdots \end{array} \right\}
  \begin{array}{cc}  \\ . \end{array}
 \end{align*}
 Recall that if 1-clusters have pairwise difference 1, they become 2-clusters.  
 Or an instance such as 
 \begin{align*}
  \left\{ \begin{array}{cc}  \\ \cdots \end{array}
  \begin{array}{cc} \\ k-2 \end{array}
  \begin{array}{cc} & k \\ k - 1 & \end{array}
  \begin{array}{cc}  \\ \cdots \end{array} \right\}
 \end{align*}
 requires redefinition of the Gordon marking, hence the clusters as 
 \begin{align*}
  \left\{ \begin{array}{cc}  \\ \cdots \end{array}
  \begin{array}{cc} & k-1 \\ k-2 & \end{array}
  \begin{array}{cc} \\ k \end{array}
  \begin{array}{cc}  \\ \cdots \end{array} \right\}
  \begin{array}{cc}  \\ , \end{array}
 \end{align*}
 or even create a 3-cluster.  
 
 Therefore, a base partition consisting only of 1- and 2-clusters looks like
 \begin{align*}
  \left\{ \begin{array}{cc} & 2 \\ 1 & \end{array}
  \begin{array}{cc} & 4 \\ 3 & \end{array}
  \begin{array}{cc}  \\ \cdots \end{array}
  \begin{array}{cc} & 2n_2 \\ 2n_2-1 & \end{array}
  \begin{array}{cc} \\ 2n_2+1 \end{array}
  \begin{array}{cc} \\ 2n_2+3 \end{array}
  \begin{array}{cc}  \\ \cdots \end{array}
  \begin{array}{cc} \\ 2n_2+2n_1-1 \end{array} \right\}
  \begin{array}{cc}  \\ . \end{array}
 \end{align*}
 Having 2-clusters greater than 1-clusters will only increase the weight.  
 One way to see this is that the 1-marked parts can be 
 $1, 3, \ldots, 2k-1$ for the least weight.  
 The introduction of the 2-marked parts will form 2-clusters.  
 $2, 4, \ldots, 2l$ is the least addendum to the weight.  
 We recall once again that a second occurrence of 1 is not allowed.  
 This covers the cases $n_1 = 0$ or $n_2 = 0$ as well.  
 
 The remaining cases are the coexistence of 3-clusters, and 1- and 2-clusters.  
 We will examine the cases $n_1 = 0$, $n_2, n_3 > 0$, 
 and $n_1, n_3 > 0$, $n_2 \geq 0$ separately, 
 for reasons that will become clear in the course.  
 
 It is clear that each cluster should have as small parts as possible in a base partition to ensure minimum weight.  
 Therefore, we will only focus on the relative placement of the clusters.  
 The na\"{i}ve guess is to place 3-clusters first, followed by 2-clusters, and then the 1-clusters.  
 For example, 
 \begin{align*}
  \left\{ \begin{array}{cc} & 3 \\ 2 & \\ 2 & \end{array}
  \begin{array}{cc} & 6 \\ 5 & \\ 5 & \end{array}
  \begin{array}{cc} \\ 8 \\ 8 \end{array}
  \begin{array}{cc} & \\ & 11 \\ 10 & \end{array}
  \begin{array}{cc} \\ \\ 12 \end{array}
  \begin{array}{cc} \\ \\ 14 \end{array} \right\}
 \end{align*} 
 has weight 86.  However, 
 \begin{align*}
  \left\{   \begin{array}{cc} \\ & 2 \\ 1 & \end{array}
  \begin{array}{cc} & \\ & 4 \\ 3 & \end{array} 
  \begin{array}{cc} & 7 \\ 6 & \\ 6 & \end{array}
  \begin{array}{cc} & 10 \\ 9 & \\ 9 & \end{array}
  \begin{array}{cc} \\ \\ 12 \end{array}
  \begin{array}{cc} \\ \\ 14 \end{array} \right\}
 \end{align*} 
 has weight 83, while 
 \begin{align*}
  \left\{   \begin{array}{cc} \\ & 2 \\ 1 & \end{array}
  \begin{array}{cc} & \\ & 4 \\ 3 & \end{array} 
  \begin{array}{cc} \\ \\ 5 \end{array}
  \begin{array}{cc} & 8 \\ 7 & \\ 7 & \end{array}
  \begin{array}{cc} & 11 \\ 10 & \\ 10 & \end{array}
  \begin{array}{cc} \\ \\ 12 \end{array} \right\}
 \end{align*} 
 has weight 80.  Having been experienced, one tries 
 \begin{align*}
  \left\{   \begin{array}{cc} \\ & 2 \\ 1 & \end{array}
  \begin{array}{cc} & \\ & 4 \\ 3 & \end{array} 
  \begin{array}{cc} \\ \\ 5 \end{array}
  \begin{array}{cc} \\ \\ 7 \end{array} 
  \begin{array}{cc} & 10 \\ 9 & \\ 9 & \end{array}
  \begin{array}{cc} & 13 \\ 12 & \\ 12 & \end{array}\right\}
  \begin{array}{cc} \\ \\ , \end{array} 
 \end{align*} 
 but the weight becomes 87.  
 The na\"{i}ve guess has another problem,   
 we will come back to it during the implementation of the forward moves.  
 
 The general case is similarly treated.  
 One should keep in mind that the 2-clusters should precede the 1-clusters in the base partition as discussed above, 
 so the relative places of the 3-clusters are to be decided.  
 One can also verify that placing 1- or 2-clusters between two 3-clusters increases the weight.  
 In summary, depending on the existence of 1-clusters, 
 the base partition will be \eqref{baseptnKR5-n1zero} or \eqref{baseptnKR5-n1positive}.  
 
 Next, we argue that any $\lambda$ enumerated by $kr_5(n,m)$ 
 having $n_r$ $r$-clusters for $r = 1, 2, 3$ corresponds to a quadruple 
 $(\beta, \mu, \eta, \nu)$ such that 
 \begin{itemize}
  \item $\beta$ is one of the base partitions \eqref{baseptnKR5-n1zero} or \eqref{baseptnKR5-n1positive}, 
    depending on $n_1 = 0$ or $n_1 > 0$, respectively,  
    
  \item $\mu$ is a partition with $n_1$ parts (counting zeros), 
  
  \item $\eta$ is a partition with $n_2$ parts (counting zeros) where no odd part repeats, 
  
  \item $\nu$ is a partition into multiples of three with $n_3$ parts (counting zeros), 
  
  \item $\vert \lambda \vert = \vert \beta \vert + \vert \mu \vert + \vert \eta \vert + \vert \nu \vert$.  
 \end{itemize}
 If, say, $\mu$ has less than $n_1$ positive parts, 
 we simply write $\mu_1 = \mu_2 = \cdots = \mu_s = 0$.  
 That is, the first so many parts of $\mu$ are declared zero.  
 Recall that we agreed to write the smaller parts first in a partition.  
 If $\mu$ is the empty partition, then all parts of it are zero.  
 $\eta$ and $\nu$ are treated likewise.  This will give us 
 \begin{align}
 \nonumber 
  & \sum_{m,n \geq 0} kr_5(n,m) q^n x^m 
  = \sum_{n_1, n_2, n_3 \geq 0} q^{\vert \beta \vert} x^{l(\beta)} \sum_{\beta, \mu, \eta, \nu} 
    q^{\vert \mu \vert + \vert \eta \vert + \vert \nu \vert}  \\
 \label{eqGFkr5intermediate}
  = & \sum_{n_1, n_2, n_3 \geq 0} 
    \underbrace{ q^{ ( 9n_3^2 + 5n_3 )/2 + 2n_2^2 + n_2 + n_1^2 + 6n_3n_2 + 3n_3n_1 + 2n_2n_1 } 
      x^{3n_3 + 2n_2 + n_1}  }_{\textrm{generating } \beta} \cdots \\
 \nonumber 
  \times & \underbrace{ \frac{ 1 }{ (q; q)_{n_1} } }_{\textrm{generating } \mu }
      \underbrace{ \frac{ (-q; q^2)_{n_2} }{ (q^2; q^2)_{n_2} } }_{\textrm{generating } \eta }
      \underbrace{ \frac{ 1 }{ (q^3; q^3)_{n_3} } }_{\textrm{generating } \nu }, 
 \end{align}
 proving the theorem.  
 We used Proposition \ref{propDistinctOdds} in the generation of $\eta$.  
 
 Given a quadruple $(\beta, \mu, \eta, \nu)$ as described above, 
 we will obtain $\lambda$ in a series of forward moves.  
 \begin{enumerate}[(a)]
  \item The $i$th largest 1-cluster in $\beta$ is moved forward the $i$th largest part of $\mu$ times 
    for $i = 1, 2, \ldots, n_1$, in this order.  
  
  \item The $i$th largest 2-cluster in the obtained intermediate partition 
    is moved forward the $i$th largest part of $\eta$ times for $i = 1, 2, \ldots, n_2$, in this order.  
  
  \item The $i$th largest 3-cluster in the obtained intermediate partition 
    is moved forward $\frac{1}{3}\times$(the $i$th largest part of $\nu$) 
    times for $i = 1, 2, \ldots, n_3$, in this order.  
 \end{enumerate}
 Conversely, given $\lambda$, we will obtain the quadruple $(\beta, \mu, \eta, \nu)$ 
 by performing backward moves on the 3-, 2-, and 1-, clusters in the exact reverse order.  
 Finally, we will argue that the forward moves and the backward moves on the $r$-clusters 
 are inverses of each other for $r = 1,2,3$, 
 and that the moves honor the difference conditions defining $kr_5(n,m)$. 
 
 The forward and backward moves on the 3-clusters are not exactly 
 forward and backward moves of the 3rd kind in the sense of Definitions \ref{defFwdMove}-\ref{defBackwdMove}.  
 However, the forward and backward moves on the 2-clusters are 
 forward or backward moves of the 2nd kind, with one exception.  
 The exception is described in due course.  
 
 We start with the forward moves.  When $\beta$ has at least one 1-cluster, i.e. $n_1 > 0$, 
 the smallest 1-cluster is smaller than the 3-clusters
 For $i = 1, 2, \ldots, n_1 - 1$, we simply add the $i$th largest part of $\mu$ to the $i$th largest 1-cluster.  
 This only increases the pairwise difference of the 1-clusters, so the difference conditions are retained.  
 If $\mu_1 > 0$, observe that the $(n_1 - 1)$th 1-cluster, if it exits, is moved forward $\mu_2$ times.  
 Therefore, it is now equal to $2n_2+3n_3+3+\mu_2 \geq 2n_2+3n_3+3+\mu_1$.  
 The first forward move on the smallest 1-cluster $2n_2+1$ entails a prestidigitation through 
 the 3-clusters as described below.  
 \begin{align*}
  & \left\{  \begin{array}{cc} \\ \\ \cdots \end{array}
  \begin{array}{cc} & \\ & 2n_2 \\ 2n_2-1 & \end{array} 
  \begin{array}{cc} \\ \\ \mathbf{2n_2+1} \end{array}
  \begin{array}{cc} & 2n_2+4 \\ 2n_2+3 & \\ 2n_2+3 & \end{array}
  \begin{array}{cc} & 2n_2+7 \\ 2n_2+6 & \\ 2n_2+6 & \end{array}
  \begin{array}{cc} \\ \\ \cdots \end{array} \right. \\[10pt]
  & \left. \begin{array}{cc} & 2n_2+3n_3+1 \\ 2n_2+3n_3 & \\ 2n_2+3n_3 & \end{array} 
  \begin{array}{cc} \\ \\ ( \textrm{ 1-clusters } \geq 2n_2+3n_3+3+\mu_1 ) \end{array}
  \right\}
 \end{align*}
 \begin{align*}
  \big\downarrow & \textrm{ 1 forward move on the 1-cluster } 2n_2 + 1
 \end{align*}
 \begin{align*}
  & \left\{  \begin{array}{cc} \\ \\ \cdots \end{array} \right.
  \begin{array}{cc} & \\ & 2n_2 \\ 2n_2-1 & \end{array} 
  \underbrace{\begin{array}{cc} \\ \\ \mathbf{2n_2+2} \end{array}
  \begin{array}{cc} & 2n_2+4 \\ 2n_2+3 & \\ 2n_2+3 & \end{array}}_{!}
  \begin{array}{cc} & 2n_2+7 \\ 2n_2+6 & \\ 2n_2+6 & \end{array}
  \begin{array}{cc} \\ \\ \cdots \end{array} \\[10pt]
  & \left. \begin{array}{cc} & 2n_2+3n_3+1 \\ 2n_2+3n_3 & \\ 2n_2+3n_3 & \end{array} 
  \begin{array}{cc} \\ \\ ( \textrm{ 1-clusters } \geq 2n_2+3n_3+3+\mu_1 ) \end{array}
  \right\} \textrm{(temporarily)}
 \end{align*}
 Here, the ! symbol signifies the violation of the difference condition at the indicated place.  
 As usual, we highlight the cluster(s) that is (are) being moved.  
 \begin{align*}
  \big\downarrow & \textrm{ adjustment }
 \end{align*}
 \begin{align*}
  & \left\{  \begin{array}{cc} \\ \\ \cdots \end{array} \right.
  \begin{array}{cc} & \\ & 2n_2 \\ 2n_2-1 & \end{array} 
  \begin{array}{cc} & 2n_2+3 \\ 2n_2+2 & \\ 2n_2+2 & \end{array}
  \underbrace{
  \begin{array}{cc} \\ \\ \mathbf{2n_2+5} \end{array}
  \begin{array}{cc} & 2n_2+7 \\ 2n_2+6 & \\ 2n_2+6 & \end{array}
  }_{!}
  \begin{array}{cc} \\ \\ \cdots \end{array} \\[10pt]
  & \left. \begin{array}{cc} & 2n_2+3n_3+1 \\ 2n_2+3n_3 & \\ 2n_2+3n_3 & \end{array} 
  \begin{array}{cc} \\ \\ ( \textrm{ 1-clusters } \geq 2n_2+3n_3+3+\mu_1 ) \end{array}
  \right\} \textrm{(temporarily)}
 \end{align*} 
 \begin{align*}
  \big\downarrow & \textrm{ after a total of } n_3 \textrm{ similar adjustments }
 \end{align*}
 \begin{align*}
  & \left\{  \begin{array}{cc} \\ \\ \cdots \end{array} \right.
  \begin{array}{cc} & \\ & 2n_2 \\ 2n_2-1 & \end{array} 
  \begin{array}{cc} & 2n_2+3 \\ 2n_2+2 & \\ 2n_2+2 & \end{array}
  \begin{array}{cc} & 2n_2+6 \\ 2n_2+5 & \\ 2n_2+5 & \end{array}
  \begin{array}{cc} \\ \\ \cdots \end{array} \\[10pt]
  & \left. 
  \begin{array}{cc} & 2n_2+3n_3 \\ 2n_2+3n_3-1 & \\ 2n_2+3n_3-1 & \end{array} 
  \begin{array}{cc} \\ \\ \mathbf{2n_2+3n_3+2} \end{array}
  \begin{array}{cc} \\ \\ ( \textrm{ 1-clusters } \geq 2n_2+3n_3+3+\mu_1 ) \end{array}
  \right\}
 \end{align*} 
 
 Notice that the adjustments do not alter the weight.  
 When the 1-cluster encounters a 3-cluster, temporarily violating the difference condition, 
 they switch places like in a \emph{puss-in-the-corner} game.  
 Three is added to the 1-cluster, and each part in the 3-cluster is decreased by one, 
 therefore preserving the total weight.  
 The process is repeated if there is another 3-cluster ahead.  
 
 We still need to add $\mu_1-1$ to the 1-cluster $2n_2 + 3n_2+ 2$, making it $2n_2+3n_3+\mu_1+1$, 
 respecting the difference condition in the configuration
 \begin{align*}
  & \left\{  \begin{array}{cc} \\ \\ \cdots \end{array} \right.
  \begin{array}{cc} & \\ & 2n_2 \\ 2n_2-1 & \end{array} 
  \begin{array}{cc} & 2n_2+3 \\ 2n_2+2 & \\ 2n_2+2 & \end{array}
  \begin{array}{cc} & 2n_2+6 \\ 2n_2+5 & \\ 2n_2+5 & \end{array}
  \begin{array}{cc} \\ \\ \cdots \end{array} \\[10pt]
  & \left. 
  \begin{array}{cc} & 2n_2+3n_3 \\ 2n_2+3n_3-1 & \\ 2n_2+3n_3-1 & \end{array} 
  \begin{array}{cc} \\ \\ ( \textrm{ 1-clusters } \geq 2n_2+3n_3+1+\mu_1 ) \end{array}
  \right\}
 \end{align*} 
 for $\mu_1 > 0$.  
 In case $\mu_1 = 0$, i.e. $\mu$ has less than $n_1$ positive parts, 
 The smallest 1-cluster stays in its original place at this stage.  
 
 Next, the forward moves on the 2-clusters are implemented.  
 The $i$th largest 2-cluster is moved the $i$th largest part of $\eta$ times forward.  
 For each positive part of $\eta$, we will prestidigitate the 2-clusters 
 through the 3-clusters as follows.  
 \begin{align*}
  & \left\{  \begin{array}{cc} \\ \\ \cdots \end{array} \right.
  \begin{array}{cc} & \\ & 2n_2-2 \\ 2n_2-3 & \end{array} 
  \begin{array}{cc} & \\ & \mathbf{2n_2} \\ \mathbf{2n_2-1} & \end{array} 
  \begin{array}{cc} & 2n_2+3 \\ 2n_2+2 & \\ 2n_2+2 & \end{array}
  \begin{array}{cc} & 2n_2+6 \\ 2n_2+5 & \\ 2n_2+5 & \end{array}
  \begin{array}{cc} \\ \\ \cdots \end{array} \\[10pt]
  & \left. 
  \begin{array}{cc} & 2n_2+3n_3 \\ 2n_2+3n_3-1 & \\ 2n_2+3n_3-1 & \end{array} 
  \begin{array}{cc} \\ \\ ( \textrm{ parts } \geq 2n_2+3n_3+2 ) \end{array}
  \right\}
 \end{align*} 
 \begin{align*}
  \big\downarrow & \textrm{ 1 forward move on the 2-cluster } 
    \begin{array}{cc} & 2n_2 \\ 2n_2-1 & \end{array}
 \end{align*}
 \begin{align*}
  & \left\{  \begin{array}{cc} \\ \\ \cdots \end{array} \right.
  \begin{array}{cc} & \\ & 2n_2-2 \\ 2n_2-3 & \end{array} 
  \underbrace{
  \begin{array}{cc} \\ \mathbf{2n_2} \\ \mathbf{2n_2} \end{array} 
  \begin{array}{cc} & 2n_2+3 \\ 2n_2+2 & \\ 2n_2+2 & \end{array}
  }_{!}
  \begin{array}{cc} & 2n_2+6 \\ 2n_2+5 & \\ 2n_2+5 & \end{array}
  \begin{array}{cc} \\ \\ \cdots \end{array} \\[10pt]
  & \left. 
  \begin{array}{cc} & 2n_2+3n_3 \\ 2n_2+3n_3-1 & \\ 2n_2+3n_3-1 & \end{array} 
  \begin{array}{cc} \\ \\ ( \textrm{ parts } \geq 2n_2+3n_3+2 ) \end{array}
  \right\}
  \textrm{ (temporarily) }
 \end{align*} 
 \begin{align*}
  \big\downarrow & \textrm{ adjustment } 
 \end{align*}
 \begin{align*}
  & \left\{  \begin{array}{cc} \\ \\ \cdots \end{array} \right.
  \begin{array}{cc} & \\ & 2n_2-2 \\ 2n_2-3 & \end{array} 
  \begin{array}{cc} & 2n_2+1 \\ 2n_2 & \\ 2n_2 & \end{array}
  \underbrace{
  \begin{array}{cc} \\ \mathbf{2n_2+3} \\ \mathbf{2n_2+3} \end{array} 
  \begin{array}{cc} & 2n_2+6 \\ 2n_2+5 & \\ 2n_2+5 & \end{array}
  }_{!}
  \begin{array}{cc} \\ \\ \cdots \end{array} \\[10pt]
  & \left. 
  \begin{array}{cc} & 2n_2+3n_3 \\ 2n_2+3n_3-1 & \\ 2n_2+3n_3-1 & \end{array} 
  \begin{array}{cc} \\ \\ ( \textrm{ parts } \geq 2n_2+3n_3+2 ) \end{array}
  \right\}
  \textrm{ (temporarily) }
 \end{align*} 
 \begin{align*}
  \big\downarrow & \textrm{ after } n_3 - 1 \textrm{ adjustments of the same kind } 
 \end{align*}
 \begin{align*}
  & \left\{  \begin{array}{cc} \\ \\ \cdots \end{array} \right.
  \begin{array}{cc} & \\ & 2n_2-2 \\ 2n_2-3 & \end{array} 
  \begin{array}{cc} & 2n_2+1 \\ 2n_2 & \\ 2n_2 & \end{array}
  \begin{array}{cc} & 2n_2+4 \\ 2n_2+3 & \\ 2n_2+3 & \end{array}
  \begin{array}{cc} \\ \\ \cdots \end{array} \\[10pt]
  & \left. 
  \begin{array}{cc} & 2n_2+3n_3-2 \\ 2n_2+3n_3-3 & \\ 2n_2+3n_3-3 & \end{array} 
  \begin{array}{cc} \\ \mathbf{2n_2+3n_3} \\ \mathbf{2n_2+3n_3} \end{array} 
  \begin{array}{cc} \\ \\ ( \textrm{ parts } \geq 2n_2+3n_3+2 ) \end{array}
  \right\}
 \end{align*} 
 At this point, the parts $\geq 2n_2+3n_3+2$ are all 1-clusters, so the difference conditions are met.  
 The initial move on each of the so many largest 2-clusters for each nonzero part of $\eta$ are these 
 prestidigitation of the 2-clusters through the 3-clusters.  
 After this initial move, the remaining moves are performed 
 as in the construction of the series side of Andrews-Gordon identities \cite{K-GordonMarking}.  
 
 There is one more condition on the collective forward moves on the 2-clusters.  
 $\eta$ cannot have repeated odd parts.  
 In other words, two successive 2-clusters cannot be moved the same odd number of times forward.  
 Let's see why this violates the difference condition.  
 
 Assume, on the contrary, that each of the two consecutive 2-clusters are to be moved $2r+1$ times forward.  
 After the initial prestidigitation through the 3-clusters, the 2-clusters will be
 \begin{align*}
  & \left\{  \begin{array}{cc} \\ \\ \cdots \end{array} \right.
  \begin{array}{cc} & k-2 \\ k-3 & \\ k-3 & \end{array} 
  \underbrace{
  \begin{array}{cc} \\ \mathbf{k} \\ \mathbf{k} \end{array} 
  \begin{array}{cc} \\ \mathbf{k+2} \\ \mathbf{k+2} \end{array} 
  }_{!}
  \left. \begin{array}{cc} \\ \\ ( \textrm{ parts } \geq k+4, \textrm{ all 1- or 2-clusters } ) \end{array}
  \right\}
  \begin{array}{cc} \\ \\ . \end{array} 
 \end{align*} 
 Then, the 2-clusters violating the difference at least three at distance three condition 
 will be double moved forward $r$ times each, each pair of double moves retaining the violation as 
 \begin{align*}
  \left\{  \begin{array}{cc} \\ \cdots \end{array} \right.
  \underbrace{
  \begin{array}{cc} \mathbf{k} \\ \mathbf{k} \end{array} 
  \begin{array}{cc} \mathbf{k+2} \\ \mathbf{k+2} \end{array} 
  }_{!}
  \left. \begin{array}{cc} \\ \cdots \end{array}
  \right\}
  \quad \longrightarrow \quad
  \left\{  \begin{array}{cc} \\ \cdots \end{array} \right.
  \underbrace{
  \begin{array}{cc} \mathbf{k+1} \\ \mathbf{k+1} \end{array} 
  \begin{array}{cc} \mathbf{k+3} \\ \mathbf{k+3} \end{array} 
  }_{!}
  \left. \begin{array}{cc} \\ \cdots \end{array}
  \right\}
  \begin{array}{cc} \\ , \end{array} 
 \end{align*} 
 or 
 \begin{align*}
  \left\{  \begin{array}{cc} \\ \cdots \end{array} \right.
  \underbrace{
  \begin{array}{cc} \mathbf{k} \\ \mathbf{k} \end{array} 
  \begin{array}{cc} \mathbf{k+2} \\ \mathbf{k+2} \end{array} 
  }_{!}
  \begin{array}{cc}  \\ k+4 \end{array} 
  \left. \begin{array}{cc} \\ \cdots \end{array}
  \right\}
  \quad \longrightarrow \quad
  \left\{  \begin{array}{cc} \\ \cdots \end{array} \right. 
  \begin{array}{cc} \\ k \end{array} 
  \underbrace{
  \begin{array}{cc} \mathbf{k+2} \\ \mathbf{k+2} \end{array} 
  \begin{array}{cc} \mathbf{k+4} \\ \mathbf{k+4} \end{array} 
  }_{!}
  \left. \begin{array}{cc} \\ \cdots \end{array}
  \right\}
  \begin{array}{cc} \\ . \end{array} 
 \end{align*} 
 In the latter possibility, the 2-clusters encountered a 1-cluster on the way.  
 
 However, the same even number of forward moves will leave the clusters as
 \begin{align*}
  \left\{  \begin{array}{cc} \\ \cdots \end{array}
  \begin{array}{cc} & \mathbf{k+1} \\ \mathbf{k} &  \end{array} 
  \begin{array}{cc} & \mathbf{k+3} \\ \mathbf{k+2} & \end{array} 
  \begin{array}{cc} \\ \cdots \end{array}
  \right\}
  \begin{array}{cc} \\ , \end{array}
 \end{align*} 
 conforming to the difference condition.  
 Or, one extra move on the larger cluster will yield
 \begin{align*}
  \left\{  \begin{array}{cc} \\ \cdots \end{array}
  \begin{array}{cc} \mathbf{k} \\ \mathbf{k} \end{array} 
  \begin{array}{cc} & \mathbf{k+3} \\ \mathbf{k+2} & \end{array} 
  \begin{array}{cc} \\ \cdots \end{array}
  \right\}
  \begin{array}{cc} \\ , \end{array}
 \end{align*} 
 again honoring the difference condition.  
 
 Thus, after the implementation of $\mu$ and $\eta$ as forward moves on the 1- and 2-clusters, 
 the intermediate partition looks like 
 \begin{align*}
  & \left\{  
  \begin{array}{cc} \\ & 2 \\ 1 &  \end{array} \right.
  \begin{array}{cc} \\ & 4 \\ 3 &  \end{array} 
  \begin{array}{cc} \\ \\ \cdots \end{array} 
  \begin{array}{cc} & \\ & 2s_2 \\ 2s_2-1 & \end{array} 
  \begin{array}{cc} & 2s_2+3 \\ 2s_2+2 & \\ 2s_2+2 & \end{array}
  \begin{array}{cc} & 2s_2+6 \\ 2s_2+5 & \\ 2s_2+5 & \end{array}
  \begin{array}{cc} \\ \\ \cdots \end{array} \\[10pt]
  & \left. 
  \begin{array}{cc} & \mathbf{2s_2+3n_3} \\ \mathbf{2s_2+3n_3-1} & \\ \mathbf{2s_2+3n_3-1} & \end{array} 
  \begin{array}{cc} \\ \\ ( \textrm{ parts } \geq 2s_2+3n_3+2, \textrm{ all 1- or 2-clusters} ) \end{array}
  \right\}
  \begin{array}{cc} \\ \\ , \end{array} 
 \end{align*} 
 for $s_2 \geq 0$, or 
 \begin{align*}
  & \left\{  
  \begin{array}{cc} \\ & 2 \\ 1 &  \end{array} \right.
  \begin{array}{cc} \\ & 4 \\ 3 &  \end{array} 
  \begin{array}{cc} \\ \\ \cdots \end{array} 
  \begin{array}{cc} & \\ & 2s_2 \\ 2s_2-1 & \end{array} 
  \begin{array}{cc} \\ \\ 2s_2+1 \end{array} 
  \begin{array}{cc} & 2s_2+4 \\ 2s_2+3 & \\ 2s_2+3 & \end{array}
  \begin{array}{cc} & 2s_2+7 \\ 2s_2+6 & \\ 2s_2+6 & \end{array}
  \begin{array}{cc} \\ \\ \cdots \end{array} \\[10pt]
  & \left. 
  \begin{array}{cc} & \mathbf{2s_2+3n_3+1} \\ \mathbf{2s_2+3n_3} & \\ \mathbf{2s_2+3n_3} & \end{array} 
  \begin{array}{cc} \\ \\ ( \textrm{ parts } \geq 2s_2+3n_3+3, \textrm{ all 1- or 2-clusters} ) \end{array}
  \right\}
  \begin{array}{cc} \\ \\ , \end{array} 
 \end{align*} 
 again, for $s_2 \geq 0$.  
 Both of the above satisfy the difference conditions.  
 The former possibly has a sediment, i.e. unmoved 2-clusters if $s_2 > 0$. 
 The latter has a sediment consisting of a 1-cluster, and if $s_2>0$, some 2-clusters as well.  
 The presence of unmoved 1- or 2-clusters, namely, sediments, 
 indicate that $\mu$ or $\eta$, respectively, have some zeros.  
 
 It remains to move the $i$th largest 3-cluster $\frac{1}{3}\times$(the $i$th largest part of $\nu$) 
 times forward.  
 Recall that $\nu$ consists of multiples of three.  
 The forward moves on the 3-clusters can be visualized in the following exclusive cases, 
 each adding three to the weight of the partition.  
 In each case, we assume that the initial configuration satisfies the necessary difference conditions.  
 \begin{align*}
  \left\{  \begin{array}{cc} \\ \\ ( \textrm{ parts } \leq k-2) \end{array} \right.
  \begin{array}{cc} & \mathbf{k+1} \\ \mathbf{k} & \\ \mathbf{k} & \end{array} 
  \left. \begin{array}{cc} \\ \\ (\textrm{ parts } \geq k+4) \end{array}
  \right\}
 \end{align*} 
 \begin{align*}
  \big\downarrow \textrm{ 1 forward move on the displayed 3-cluster}
 \end{align*} 
 \begin{align*}
  \left\{  \begin{array}{cc} \\ \\ ( \textrm{ parts } \leq k-2) \end{array} \right. 
  \begin{array}{cc} & \mathbf{k+2} \\ \mathbf{k+1} & \\ \mathbf{k+1} & \end{array} 
  \left. \begin{array}{cc} \\ \\ (\textrm{ parts } \geq k+4) \end{array}
  \right\}
 \end{align*} 
 Above, the part $k-2$ cannot repeat if it occurs, 
 since we assumed that the initial configuration satisfies the difference conditions.  
 $k+4$ may occur up to twice, but not thrice.  
 \begin{align*}
  \left\{  \begin{array}{cc} \\ \\ ( \textrm{ parts } \leq k-2) \end{array} \right.
  \begin{array}{cc} & \mathbf{k+1} \\ \mathbf{k} & \\ \mathbf{k} & \end{array} 
  \begin{array}{cc} \\ \\ k+3 \end{array} 
  \left. \begin{array}{cc} \\ \\ (\textrm{ parts } \geq k+5) \end{array}
  \right\}
 \end{align*} 
 \begin{align*}
  \big\downarrow \textrm{ 1 forward move on the displayed 3-cluster}
 \end{align*} 
 \begin{align*}
  \left\{  \begin{array}{cc} \\ \\ ( \textrm{ parts } \leq k-2) \end{array} \right. 
  \underbrace{
  \begin{array}{cc} & \mathbf{k+2} \\ \mathbf{k+1} & \\ \mathbf{k+1} & \end{array} 
  \begin{array}{cc} \\ \\ k+3 \end{array} 
  }_{!}
  \left. \begin{array}{cc} \\ \\ (\textrm{ parts } \geq k+5) \end{array}
  \right\} 
  \textrm{(temporarily)}
 \end{align*} 
 \begin{align*}
  \big\downarrow \textrm{ adjustment }
 \end{align*} 
 \begin{align*}
  \left\{  \begin{array}{cc} \\ \\ ( \textrm{ parts } \leq k-2) \end{array} \right. 
  \begin{array}{cc} \\ \\ k \end{array} 
  \begin{array}{cc} & \mathbf{k+3} \\ \mathbf{k+2} & \\ \mathbf{k+2} & \end{array} 
  \left. \begin{array}{cc} \\ \\ (\textrm{ parts } \geq k+5) \end{array}
  \right\} 
 \end{align*} 
 Above, again, the part $k-2$ can occur only once.  
 $k+5$ may occur twice, but not thrice.  
 
 \begin{align*}
  & \left\{  \begin{array}{cc} \\ \\ ( \textrm{ parts } \leq k-2) \end{array} \right.
  \begin{array}{cc} & \mathbf{k+1} \\ \mathbf{k} & \\ \mathbf{k} & \end{array} 
  \begin{array}{cc} \\ & k+4 \\ k+3 & \end{array} 
  \begin{array}{cc} \\ & k+6 \\ k+5 & \end{array} \\[10pt]
  & \begin{array}{cc} \\ \\ \cdots \end{array} 
  \begin{array}{cc} \\ & k+2s+2 \\ k+2s+1 & \end{array} 
  \left. \begin{array}{cc} \\ \\ (\textrm{ parts } \geq k+2s+4) \end{array}
  \right\}
 \end{align*} 
 \begin{align*}
  \big\downarrow \textrm{ 1 forward move on the displayed 3-cluster}
 \end{align*} 
 \begin{align*}
  & \left\{  \begin{array}{cc} \\ \\ ( \textrm{ parts } \leq k-2) \end{array} \right.
  \underbrace{
  \begin{array}{cc} & \mathbf{k+2} \\ \mathbf{k+1} & \\ \mathbf{k+1} & \end{array} 
  \begin{array}{cc} \\ & k+4 \\ k+3 & \end{array} 
  }_{!}
  \begin{array}{cc} \\ & k+6 \\ k+5 & \end{array} \\[10pt]
  & \begin{array}{cc} \\ \\ \cdots \end{array} 
  \begin{array}{cc} \\ & k+2s+2 \\ k+2s+1 & \end{array} 
  \left. \begin{array}{cc} \\ \\ (\textrm{ parts } \geq k+2s+4) \end{array}
  \right\} \textrm{ (temporarily) }
 \end{align*} 
 \begin{align*}
  \big\downarrow \textrm{ adjustment}
 \end{align*} 
 \begin{align*}
  & \left\{  \begin{array}{cc} \\ \\ ( \textrm{ parts } \leq k-2) \end{array} \right.
  \begin{array}{cc} \\ & k+1 \\ k & \end{array} 
  \underbrace{
  \begin{array}{cc} & \mathbf{k+4} \\ \mathbf{k+3} & \\ \mathbf{k+3} & \end{array} 
  \begin{array}{cc} \\ & k+6 \\ k+5 & \end{array} 
  }_{!}
  \begin{array}{cc} \\ \\ \cdots \end{array} \\[10pt]
  & \begin{array}{cc} \\ & k+2s+2 \\ k+2s+1 & \end{array} 
  \left. \begin{array}{cc} \\ \\ (\textrm{ parts } \geq k+2s+4) \end{array}
  \right\} \textrm{ (temporarily) }
 \end{align*} 
 \begin{align*}
  \big\downarrow \textrm{ after } s-1 \textrm{ similar adjustments}
 \end{align*} 
 \begin{align*}
  & \left\{  \begin{array}{cc} \\ \\ ( \textrm{ parts } \leq k-2) \end{array} \right.
  \begin{array}{cc} \\ & k+1 \\ k & \end{array} 
  \begin{array}{cc} \\ & k+3 \\ k+2 & \end{array} 
  \begin{array}{cc} \\ \\ \cdots \end{array} \\[10pt]
  & \begin{array}{cc} \\ & k+2s-1 \\ k+2s-2 & \end{array} 
  \begin{array}{cc} & \mathbf{k+2s+2} \\ \mathbf{k+2s+1} & \\ \mathbf{k+2s+1} & \end{array} 
  \left. \begin{array}{cc} \\ \\ (\textrm{ parts } \geq k+2s+4) \end{array}
  \right\}
 \end{align*} 
 for $s \geq 1$.  
 Again, if $k-2$ occurs in the above configuration, it cannot repeat.  
 $k+2s-4$ may repeat up to twice.  
 The adjustments do not alter the weight.  
 The adjustments are switching places of the 3- and 2-clusters when they are too close together.  
 There are three other cases summarized below.  
 They are very similar to the ones already explained, so we omit the details.  
 
 \begin{align*}
  & \left\{  \begin{array}{cc} \\ \\ ( \textrm{ parts } \leq k-2) \end{array} \right.
  \begin{array}{cc} & \mathbf{k+1} \\ \mathbf{k} & \\ \mathbf{k} & \end{array} 
  \begin{array}{cc} \\ & k+4 \\ k+3 & \end{array} 
  \begin{array}{cc} \\ & k+6 \\ k+5 & \end{array} \\[10pt]
  & \begin{array}{cc} \\ \\ \cdots \end{array} 
  \begin{array}{cc} \\ & k+2s+2 \\ k+2s+1 & \end{array} 
  \begin{array}{cc} \\ \\ k+2s+3 \end{array} 
  \left. \begin{array}{cc} \\ \\ (\textrm{ parts } \geq k+2s+5) \end{array}
  \right\}
 \end{align*} 
 \begin{align*}
  \big\downarrow \textrm{ 1 forward move on the displayed 3-cluster, followed by adjustments}
 \end{align*} 
 \begin{align*}
  & \left\{  \begin{array}{cc} \\ \\ ( \textrm{ parts } \leq k-2) \end{array} \right.
  \begin{array}{cc} \\ & k+1 \\ k & \end{array} 
  \begin{array}{cc} \\ & k+3 \\ k+2 & \end{array} 
  \begin{array}{cc} \\ \\ \cdots \end{array} \\[10pt]
  & \begin{array}{cc} \\ & k+2s-1 \\ k+2s-2 & \end{array} 
  \begin{array}{cc} \\ \\ k+2s \end{array} 
  \begin{array}{cc} & \mathbf{k+2s+3} \\ \mathbf{k+2s+2} & \\ \mathbf{k+2s+2} & \end{array} 
  \left. \begin{array}{cc} \\ \\ (\textrm{ parts } \geq k+2s+5) \end{array}
  \right\}
 \end{align*} 
 for $s \geq 0$.  
 
 \begin{align*}
  & \left\{  \begin{array}{cc} \\ \\ ( \textrm{ parts } \leq k-2) \end{array} \right.
  \begin{array}{cc} & \mathbf{k+1} \\ \mathbf{k} & \\ \mathbf{k} & \end{array} 
  \begin{array}{cc} \\ k+3 \\ k+3 \end{array} 
  \begin{array}{cc} \\ & k+6 \\ k+5 & \end{array}
  \begin{array}{cc} \\ & k+8 \\ k+7 & \end{array} \\[10pt]
  & \begin{array}{cc} \\ \\ \cdots \end{array} 
  \begin{array}{cc} \\ & k+2s+2 \\ k+2s+1 & \end{array} 
  \left. \begin{array}{cc} \\ \\ (\textrm{ parts } \geq k+2s+4) \end{array}
  \right\}
 \end{align*} 
 \begin{align*}
  \big\downarrow \textrm{ 1 forward move on the displayed 3-cluster, followed by adjustments}
 \end{align*} 
 \begin{align*}
  & \left\{  \begin{array}{cc} \\ \\ ( \textrm{ parts } \leq k-2) \end{array} \right.
  \begin{array}{cc} \\ k \\ k \end{array} 
  \begin{array}{cc} \\ & k+3 \\ k+2 & \end{array} 
  \begin{array}{cc} \\ & k+5 \\ k+4 & \end{array} 
  \begin{array}{cc} \\ \\ \cdots \end{array} \\[10pt]
  & \begin{array}{cc} \\ & k+2s-1 \\ k+2s-2 & \end{array} 
  \begin{array}{cc} & \mathbf{k+2s+2} \\ \mathbf{k+2s+1} & \\ \mathbf{k+2s+1} & \end{array} 
  \left. \begin{array}{cc} \\ \\ (\textrm{ parts } \geq k+2s+4) \end{array}
  \right\}
 \end{align*} 
 for $s \geq 1$, the case $s=1$ giving an empty streak after the smallest displayed 2-cluster.  
 
 \begin{align*}
  & \left\{  \begin{array}{cc} \\ \\ ( \textrm{ parts } \leq k-2) \end{array} \right.
  \begin{array}{cc} & \mathbf{k+1} \\ \mathbf{k} & \\ \mathbf{k} & \end{array} 
  \begin{array}{cc} \\ k+3 \\ k+3 \end{array} 
  \begin{array}{cc} \\ & k+6 \\ k+5 & \end{array}
  \begin{array}{cc} \\ & k+8 \\ k+7 & \end{array} \\[10pt]
  & \begin{array}{cc} \\ \\ \cdots \end{array} 
  \begin{array}{cc} \\ & k+2s+2 \\ k+2s+1 & \end{array} 
  \begin{array}{cc} \\ \\ k+2s+3 \end{array} 
  \left. \begin{array}{cc} \\ \\ (\textrm{ parts } \geq k+2s+5) \end{array}
  \right\}
 \end{align*} 
 \begin{align*}
  \big\downarrow \textrm{ 1 forward move on the displayed 3-cluster, followed by adjustments}
 \end{align*} 
 \begin{align*}
  & \left\{  \begin{array}{cc} \\ \\ ( \textrm{ parts } \leq k-2) \end{array} \right.
  \begin{array}{cc} \\ k \\ k \end{array} 
  \begin{array}{cc} \\ & k+3 \\ k+2 & \end{array} 
  \begin{array}{cc} \\ & k+5 \\ k+4 & \end{array} 
  \begin{array}{cc} \\ \\ \cdots \end{array} \\[10pt]
  & \begin{array}{cc} \\ & k+2s-1 \\ k+2s-2 & \end{array} 
  \begin{array}{cc} \\ \\ k+2s\end{array} 
  \begin{array}{cc} & \mathbf{k+2s+3} \\ \mathbf{k+2s+2} & \\ \mathbf{k+2s+2} & \end{array} 
  \left. \begin{array}{cc} \\ \\ (\textrm{ parts } \geq k+2s+5) \end{array}
  \right\}
 \end{align*} 
 for $s \geq 1$.  
 In the above three respective cases, $k+2s+4$ or $k+2s+5$ may repeat up to twice.  
 In none of the cases may $k-2$ repeat without violating the difference conditions in the initial configuration.  
 
 It is routine to check that in all of the above forward moves on the 3-cluster, 
 the preceding cluster, if any, may also move forward at least once.  
 This concludes the construction of $\lambda$ enumerated by $kr_5(n, m)$, 
 given $(\beta, \mu, \eta, \nu)$.  
 
 The reverse part of the construction is the decomposition of $\lambda$ into the quadruple 
 $(\beta, \mu, \eta, \nu)$ as described above.  
 First, we determine the number or $r$-clusters $n_r$ for $r = 1, 2, 3$ in $\lambda$.  
 
 We will first move the smallest 3-cluster, if any, backward so many times, 
 and call the number of required moves $\frac{1}{3}\times \nu_1$, 
 where $\nu_1$ is the smallest part of $\nu$.  
 $\nu_1$ will clearly be a multiple of three.  
 Each backward move on this cluster will deduct three from the weight of $\lambda$, 
 and the same amount will be registered as the weight of $\nu$.  
 
 $\lambda$ may start with either of the following sediments.  
 \begin{align*}
  \left\{  \begin{array}{cc} & 2 \\ 1 & \end{array} 
  \begin{array}{cc} & 3 \\ 4 & \end{array} 
  \begin{array}{cc} \\ \cdots \end{array} 
  \begin{array}{cc} & 2s \\ 2s-1 & \end{array} 
  \begin{array}{cc} \\ (\textrm{ parts } \geq 2s+2) \end{array}
  \right\}
  \begin{array}{cc} \\ , \end{array}
 \end{align*} 
 or
 \begin{align*}
  \left\{  \begin{array}{cc} & 2 \\ 1 & \end{array} 
  \begin{array}{cc} & 3 \\ 4 & \end{array} 
  \begin{array}{cc} \\ \cdots \end{array} 
  \begin{array}{cc} & 2s \\ 2s-1 & \end{array} 
  \begin{array}{cc} \\ 2s+1 \end{array} 
  \begin{array}{cc} \\ (\textrm{ parts } \geq 2s+3) \end{array}
  \right\}
  \begin{array}{cc} \\ , \end{array}
 \end{align*} 
 for $s \geq 0$, the case $s = 0$ corresponding to having no 2-clusters in the sediments.  
 In the above two events, the backward moves on the smallest 3-cluster will stow it as
 \begin{align*}
  \left\{  \begin{array}{cc} \\ & 2 \\ 1 & \end{array} 
  \begin{array}{cc} \\ & 3 \\ 4 & \end{array} 
  \begin{array}{cc} \\ \\ \cdots \end{array} 
  \begin{array}{cc} \\ & 2s \\ 2s-1 & \end{array} 
  \begin{array}{cc} & \mathbf{2s+3} \\ \mathbf{2s+2} & \\ \mathbf{2s+2} & \end{array} 
  \begin{array}{cc} \\ \\ (\textrm{ parts } \geq 2s+5) \end{array}
  \right\}
  \begin{array}{cc} \\ \\ , \end{array}
 \end{align*} 
 or
 \begin{align*}
  \left\{  \begin{array}{cc} \\ & 2 \\ 1 & \end{array} 
  \begin{array}{cc} \\ & 3 \\ 4 & \end{array} 
  \begin{array}{cc} \\ \\ \cdots \end{array} 
  \begin{array}{cc} \\ & 2s \\ 2s-1 & \end{array} 
  \begin{array}{cc} \\ \\ 2s+1 \end{array} 
  \begin{array}{cc} & \mathbf{2s+4} \\ \mathbf{2s+3} & \\ \mathbf{2s+3} & \end{array} 
  \begin{array}{cc} \\ \\ (\textrm{ parts } \geq 2s+6) \end{array}
  \right\}
  \begin{array}{cc} \\ \\ , \end{array}
 \end{align*} 
 respectively.  
 If the smallest 3-cluster is already one of the displayed ones above, 
 we declare $\nu_1 = 0$.  
 
 Let's describe the backward moves and adjustments in the exclusive cases below.  
 Then, we will argue that the 3-cluster cannot go further back.  
 \begin{align*}
  \left\{  \begin{array}{cc} \\ \\ ( \textrm{ parts } \leq k-3) \end{array} \right. 
  \begin{array}{cc} & \mathbf{k+1} \\ \mathbf{k} & \\ \mathbf{k} & \end{array} 
  \left. \begin{array}{cc} \\ \\ (\textrm{ parts } \geq k+3) \end{array}
  \right\}
 \end{align*} 
 \begin{align*}
  \big\downarrow \textrm{ 1 backward move on the displayed 3-cluster}
 \end{align*} 
 \begin{align*}
  \left\{  \begin{array}{cc} \\ \\ ( \textrm{ parts } \leq k-3) \end{array} \right.
  \begin{array}{cc} & \mathbf{k} \\ \mathbf{k-1} & \\ \mathbf{k-1} & \end{array} 
  \left. \begin{array}{cc} \\ \\ (\textrm{ parts } \geq k+3) \end{array}
  \right\}
 \end{align*} 
 Above, $k-3$ will be assumed to not repeat, 
 so that the difference conditions are met in the terminal configuration.  
 However, $k-3$ may very well repeat without violating the difference conditions in the initial configuration.  
 That case will be treated below.  $k+3$ may repeat up to twice.  
 \begin{align*}
  \left\{  \begin{array}{cc} \\ \\ ( \textrm{ parts } \leq k-4) \end{array} \right. 
  \begin{array}{cc} \\ \\ k-2 \end{array} 
  \begin{array}{cc} & \mathbf{k+1} \\ \mathbf{k} & \\ \mathbf{k} & \end{array} 
  \left. \begin{array}{cc} \\ \\ (\textrm{ parts } \geq k+3) \end{array}
  \right\} 
 \end{align*} 
 \begin{align*}
  \big\downarrow \textrm{ 1 forward move on the displayed 3-cluster}
 \end{align*} 
 \begin{align*}
  \left\{  \begin{array}{cc} \\ \\ ( \textrm{ parts } \leq k-4) \end{array} \right. 
  \underbrace{
  \begin{array}{cc} \\ \\ k-2 \end{array} 
  \begin{array}{cc} & \mathbf{k} \\ \mathbf{k-1} & \\ \mathbf{k-1} & \end{array} 
  }_{!}
  \left. \begin{array}{cc} \\ \\ (\textrm{ parts } \geq k+3) \end{array}
  \right\} 
  \textrm{(temporarily)}
 \end{align*} 
 \begin{align*}
  \big\downarrow \textrm{ adjustment }
 \end{align*} 
 \begin{align*}
  \left\{  \begin{array}{cc} \\ \\ ( \textrm{ parts } \leq k-4) \end{array} \right.
  \begin{array}{cc} & \mathbf{k-1} \\ \mathbf{k-2} & \\ \mathbf{k-2} & \end{array} 
  \begin{array}{cc} \\ \\ k+1 \end{array} 
  \left. \begin{array}{cc} \\ \\ (\textrm{ parts } \geq k+3) \end{array}
  \right\}
 \end{align*} 
 Observe that the adjustment does not change the weight of the partition.  
 Again, we assume that $k-4$ is not repeated, 
 so that the difference condition is not violated in the terminal configuration.  
 The case of repeating $k-4$'s will be treated below.  
 $k+3$ may repeat up to twice, but not thrice.  
 
 \begin{align*}
  & \left\{  \begin{array}{cc} \\ \\ ( \textrm{ parts } \leq k-2s-3) \end{array} \right.
  \begin{array}{cc} \\ & k-2s \\ k-2s-1 & \end{array} 
  \begin{array}{cc} \\ & k-2s+2 \\ k-2s+1 & \end{array} 
  \begin{array}{cc} \\ \\ \cdots \end{array} \\[10pt]
  & \begin{array}{cc} \\ & k-2 \\ k-3 & \end{array} 
  \begin{array}{cc} & \mathbf{k+1} \\ \mathbf{k} & \\ \mathbf{k} & \end{array} 
  \left. \begin{array}{cc} \\ \\ (\textrm{ parts } \geq k+3) \end{array}
  \right\}
 \end{align*} 
 \begin{align*}
  \big\downarrow \textrm{ 1 backward move on the displayed 3-cluster}
 \end{align*} 
 \begin{align*}
  & \left\{  \begin{array}{cc} \\ \\ ( \textrm{ parts } \leq k-2s-3) \end{array} \right.
  \begin{array}{cc} \\ & k-2s \\ k-2s-1 & \end{array} 
  \begin{array}{cc} \\ & k-2s+2 \\ k-2s+1 & \end{array} 
  \begin{array}{cc} \\ \\ \cdots \end{array} \\[10pt]
  & \underbrace{
  \begin{array}{cc} \\ & k-2 \\ k-3 & \end{array} 
  \begin{array}{cc} & \mathbf{k} \\ \mathbf{k-1} & \\ \mathbf{k-1} & \end{array} 
  }_{!}
  \left. \begin{array}{cc} \\ \\ (\textrm{ parts } \geq k+3) \end{array}
  \right\} \textrm{ (temporarily) }
 \end{align*} 
 \begin{align*}
  \big\downarrow \textrm{ adjustment}
 \end{align*} 
 \begin{align*}
  & \left\{  \begin{array}{cc} \\ \\ ( \textrm{ parts } \leq k-2s-3) \end{array} \right.
  \begin{array}{cc} \\ & k-2s \\ k-2s-1 & \end{array} 
  \begin{array}{cc} \\ & k-2s+2 \\ k-2s+1 & \end{array} 
  \begin{array}{cc} \\ \\ \cdots \end{array} \\[10pt]
  & \underbrace{
  \begin{array}{cc} \\ & k-4 \\ k-5 & \end{array} 
  \begin{array}{cc} & \mathbf{k-2} \\ \mathbf{k-3} & \\ \mathbf{k-3} & \end{array} 
  }_{!}
  \begin{array}{cc} \\ & k+1 \\ k & \end{array} 
  \left. \begin{array}{cc} \\ \\ (\textrm{ parts } \geq k+3) \end{array}
  \right\} \textrm{ (temporarily) }
 \end{align*} 
 \begin{align*}
  \big\downarrow \textrm{ after } s-1 \textrm{ similar adjustments}
 \end{align*} 
 \begin{align*}
  & \left\{  \begin{array}{cc} \\ \\ ( \textrm{ parts } \leq k-2s-3) \end{array} \right.
  \begin{array}{cc} & \mathbf{k-2s} \\ \mathbf{k-2s-1} & \\ \mathbf{k-2s-1} & \end{array} 
  \begin{array}{cc} \\ & k-2s+3 \\ k-2s+2 & \end{array} \\[10pt]
  & \begin{array}{cc} \\ & k-2s+5 \\ k+2s+4 & \end{array} 
  \begin{array}{cc} \\ \\ \cdots \end{array} 
  \begin{array}{cc} \\ & k+1 \\ k & \end{array} 
  \left. \begin{array}{cc} \\ \\ (\textrm{ parts } \geq k+3) \end{array}
  \right\}
 \end{align*} 
 for $s \geq 1$.  
 Here, again, we will assume that $k-2s-3$ does not repeat, 
 so that the terminal configuration conforms to the difference conditions set forth by $kr_5(n, m)$.  
 $k+3$ may repeat up to twice.  
 As before, the adjustments do not alter the weight.  
 The three cases below are very similar to the last one.  
 They cover the cases of repeated smaller parts as well.  
 We leave the details to the reader.  
 
 \begin{align*}
  & \left\{  \begin{array}{cc} \\ \\ ( \textrm{ parts } \leq k-2s-4) \end{array} \right.
  \begin{array}{cc} \\ & k-2s-1 \\ k-2s-2 & \end{array} 
  \begin{array}{cc} \\ & k-2s+1 \\ k-2s & \end{array} 
  \begin{array}{cc} \\ \\ \cdots \end{array} \\[10pt]
  & \begin{array}{cc} \\ & k-3 \\ k-4 & \end{array} 
  \begin{array}{cc} \\ \\ k-2 \end{array} 
  \begin{array}{cc} & \mathbf{k+1} \\ \mathbf{k} & \\ \mathbf{k} & \end{array} 
  \left. \begin{array}{cc} \\ \\ (\textrm{ parts } \geq k+3) \end{array}
  \right\}
 \end{align*} 
 \begin{align*}
  \big\downarrow \textrm{ 1 backward move on the displayed 3-cluster, followed by adjustments}
 \end{align*} 
 \begin{align*}
  & \left\{  \begin{array}{cc} \\ \\ ( \textrm{ parts } \leq k-2s-4) \end{array} \right.
  \begin{array}{cc} & \mathbf{k-2s-1} \\ \mathbf{k-2s-2} & \\ \mathbf{k-2s-2} & \end{array} 
  \begin{array}{cc} \\ & k-2s+2 \\ k-2s+1 & \end{array} \\[10pt]
  & \begin{array}{cc} \\ & k-2s+4 \\ k-2s+3 & \end{array} 
  \begin{array}{cc} \\ \\ \cdots \end{array} 
  \begin{array}{cc} \\ & k \\ k-1 & \end{array} 
  \begin{array}{cc} \\ \\ k+1 \end{array} 
  \left. \begin{array}{cc} \\ \\ (\textrm{ parts } \geq k+3) \end{array}
  \right\}
 \end{align*} 
 for $s \geq 1$.  
 
 \begin{align*}
  & \left\{  \begin{array}{cc} \\ \\ ( \textrm{ parts } \leq k-2s-4) \end{array} \right.
  \begin{array}{cc} \\ k-2s-1 \\ k-2s-1 \end{array} 
  \begin{array}{cc} \\ & k-2s+2 \\ k-2s+1 & \end{array} \\[10pt]
  & \begin{array}{cc} \\ & k-2s+4 \\ k-2s+3 & \end{array} 
  \begin{array}{cc} \\ \\ \cdots \end{array} 
  \begin{array}{cc} \\ & k-3\\ k-2 & \end{array} 
  \begin{array}{cc} & \mathbf{k+1} \\ \mathbf{k} & \\ \mathbf{k} & \end{array} 
  \left. \begin{array}{cc} \\ \\ (\textrm{ parts } \geq k+3) \end{array}
  \right\}
 \end{align*} 
 \begin{align*}
  \big\downarrow \textrm{ 1 backward move on the displayed 3-cluster, followed by adjustments}
 \end{align*} 
 \begin{align*}
  & \left\{  \begin{array}{cc} \\ \\ ( \textrm{ parts } \leq k-2s-4) \end{array} \right.
  \begin{array}{cc} & \mathbf{k-2s} \\ \mathbf{k-2s-1} & \\ \mathbf{k-2s-1} & \end{array} 
  \begin{array}{cc} \\ k-2s+2 \\ k-2s+2 \end{array} 
  \begin{array}{cc} \\ & k-2s+5 \\ k-2s+4 & \end{array} \\[10pt]
  & \begin{array}{cc} \\ & k-2s+7 \\ k-2s+6 & \end{array}
  \begin{array}{cc} \\ \\ \cdots \end{array} 
  \begin{array}{cc} \\ & k+1 \\ k & \end{array} 
  \left. \begin{array}{cc} \\ \\ (\textrm{ parts } \geq k+3) \end{array}
  \right\}
 \end{align*} 
 for $s \geq 1$, the case $s=1$ giving an empty streak after the smallest displayed 2-cluster.  
 
 \begin{align*}
  & \left\{  \begin{array}{cc} \\ \\ ( \textrm{ parts } \leq k-2s-4) \end{array} \right.
  \begin{array}{cc} \\ k-2s-2 \\ k-2s-2 \end{array} 
  \begin{array}{cc} \\ & k-2s+1 \\ k-2s & \end{array} 
  \begin{array}{cc} \\ & k-2s+3 \\ k-2s+2 & \end{array} 
  \begin{array}{cc} \\ \\ \cdots \end{array} \\[10pt]
  & \begin{array}{cc} \\ & k-3 \\ k-4 & \end{array} 
  \begin{array}{cc} \\ \\ k-2\end{array} 
  \begin{array}{cc} & \mathbf{k+1} \\ \mathbf{k} & \\ \mathbf{k} & \end{array} 
  \left. \begin{array}{cc} \\ \\ (\textrm{ parts } \geq k+3) \end{array}
  \right\}
 \end{align*} 
 \begin{align*}
  \big\downarrow \textrm{ 1 backward move on the displayed 3-cluster, followed by adjustments}
 \end{align*} 
 \begin{align*}
  & \left\{  \begin{array}{cc} \\ \\ ( \textrm{ parts } \leq k-2s-4) \end{array} \right.
  \begin{array}{cc} & \mathbf{k-2s-1} \\ \mathbf{k-2s-2} & \\ \mathbf{k-2s-2} & \end{array} 
  \begin{array}{cc} \\ k-2s+1 \\ k-2s+1 \end{array} 
  \begin{array}{cc} \\ & k-2s+4 \\ k-2s+3 & \end{array} \\[10pt]
  & \begin{array}{cc} \\ & k-2s+6 \\ k-2s+5 & \end{array}
  \begin{array}{cc} \\ \\ \cdots \end{array} 
  \begin{array}{cc} \\ & k \\ k-1 & \end{array} 
  \begin{array}{cc} \\ \\ k+1 \end{array} 
  \left. \begin{array}{cc} \\ \\ (\textrm{ parts } \geq k+3) \end{array}
  \right\}
 \end{align*} 
 for $s \geq 1$.  
 Above, $k+3$ may repeat twice, but not thrice.  
 In none of the respective three cases above, do $k-2s-4$ or $k-2s-3$ repeat, if they occur.  
 Notice that the omitted cases of repetition are taken care of by the last two cases.  
 
 Again, it is routine to verify that one backward move on a 3-cluster 
 allows at least one move on the succeding 3-cluster.  
 
 Once we complete the backward moves on the smallest 3-cluster, 
 we repeat the same process for the next smallest, 
 and move it backward as far as it can go, 
 recording the number of moves as $\frac{1}{3} \times \nu_2$, $\frac{1}{3} \times \nu_3$, \ldots, 
 $\frac{1}{3} \times \nu_{n_3}$.  
 This will give us the partition $\nu$ with $n_3$ parts (counting zeros) into multiples of three.  
 The intermediate partition looks like
 \begin{align}
 \nonumber
  & \left\{  \begin{array}{cc} \\ & 2 \\ 1 & \end{array} \right.
  \begin{array}{cc} \\ & 4 \\ 3 & \end{array} 
  \begin{array}{cc} \\ \\ \cdots \end{array} 
  \begin{array}{cc} \\ & 2s \\ 2s-1 & \end{array} 
  \begin{array}{cc} & 2s+3 \\ 2s+2 & \\ 2s+2 & \end{array} 
  \begin{array}{cc} & 2s+6 \\ 2s+5 & \\ 2s+5 & \end{array} 
  \begin{array}{cc} \\ \\ \cdots \end{array}  \\[10pt]
 \label{intermPtnkr_5-back-Conf1}
  & \begin{array}{cc} & 2s+3n_3 \\ 2s+3n_3-1 & \\ 2s+3n_3-1 & \end{array} 
  \left. \begin{array}{cc} \\ \\ (\textrm{ parts } \geq 2s+3n_3+2, \textrm{ all 1- or 2-clusters }) \end{array}
  \right\}
 \end{align}
 for $s \geq 0$, $s = 0$ being the case of no 2-clusters smaller than the 3-clusters, or
 \begin{align}
 \nonumber
  & \left\{  \begin{array}{cc} \\ & 2 \\ 1 & \end{array} \right.
  \begin{array}{cc} \\ & 4 \\ 3 & \end{array} 
  \begin{array}{cc} \\ \\ \cdots \end{array} 
  \begin{array}{cc} \\ & 2s \\ 2s-1 & \end{array} 
  \begin{array}{cc} \\ \\ 2s+1 \end{array} 
  \begin{array}{cc} & 2s+4 \\ 2s+3 & \\ 2s+3 & \end{array} 
  \begin{array}{cc} & 2s+7 \\ 2s+6 & \\ 2s+6 & \end{array} 
  \begin{array}{cc} \\ \\ \cdots \end{array} \\[10pt]
 \label{intermPtnkr_5-back-Conf2}
  & \begin{array}{cc} & 2s+3n_3+1 \\ 2s+3n_3 & \\ 2s+3n_3 & \end{array} 
  \left. \begin{array}{cc} \\ \\ (\textrm{ parts } \geq 2s+3n_3+3, \textrm{ all 1- or 2-clusters }) \end{array}
  \right\}
 \end{align}
 for $s \geq 0$.  
 If one or more 3-clusters were in the indicated places, 
 we would have set $\eta_1 = 0$, $\eta_2 = 0$, \ldots, as many as necessary.  
 
 Notice that the cases for the backward moves on the 3-clusters are inverses of the cases 
 for the forward moves on the 3-clusters, in their respective order, 
 after necessary shifts of all parts.  
 The rulebreaking in the middle temporary cases are slightly different; 
 however, the initial cases become the terminal cases, and vice-versa.  
 We find the given descriptions more intuitive.  
 
 For a moment, suppose we wanted to move the smallest 3-cluster backward one more time, 
 and do some adjustments so as to retain the difference conditions imposed by $kr_5(n, m)$, 
 in the intermediate partition \eqref{intermPtnkr_5-back-Conf1}.  
 \begin{align*}
  \left\{  \begin{array}{cc} \\ & 2 \\ 1 & \end{array} \right.
  \begin{array}{cc} \\ & 4 \\ 3 & \end{array} 
  \begin{array}{cc} \\ \\ \cdots \end{array} 
  \begin{array}{cc} \\ & 2s \\ 2s-1 & \end{array} 
  \begin{array}{cc} & \mathbf{2s+3} \\ \mathbf{2s+2} & \\ \mathbf{2s+2} & \end{array} 
  \left. \begin{array}{cc} \\ \\ (\textrm{ parts } \geq 2s+5) \end{array}
  \right\}
 \end{align*}
 \begin{align*}
  \big\downarrow \textrm{ 1 backward move on the displayed 3-cluster, followed by adjustments}
 \end{align*} 
 \begin{align*}
  \left\{  \begin{array}{cc} & \mathbf{2} \\ \mathbf{1} & \\ \mathbf{1} & \end{array} 
  \begin{array}{cc} \\ & 5 \\ 4 & \end{array} \right.
  \begin{array}{cc} \\ & 7 \\ 6 & \end{array} 
  \begin{array}{cc} \\ \\ \cdots \end{array} 
  \begin{array}{cc} \\ & 2s \\ 2s-1 & \end{array} 
  \left. \begin{array}{cc} \\ \\ (\textrm{ parts } \geq 2s+5) \end{array}
  \right\}
 \end{align*}
 This creates two occurrences of 1's, 
 which is forbidden by the conditions of $kr_5(n, m)$, 
 and shows us that the 3-clusters are indeed as small as they can be.  
 
 Now, in either \eqref{intermPtnkr_5-back-Conf1} or \eqref{intermPtnkr_5-back-Conf2}, 
 we continue with implementing the backward moves on the 2-clusters.  
 In either configuration, if $s > 0$, we set $\eta_1$ $= \eta_2$ $= \cdots$ $= \eta_s$ $= 0$.  
 This because the smallest $s$ 2-clusters are already minimal.  
 They cannot be moved further back.  
 We then move the $(s+1)$th smallest 2-cluster using the backward moves of the 2nd kind (ref), bringing it to
 \begin{align*}
  & \left\{  \begin{array}{cc} \\ & 2 \\ 1 & \end{array} \right.
  \begin{array}{cc} \\ & 4 \\ 3 & \end{array} 
  \begin{array}{cc} \\ \\ \cdots \end{array} 
  \begin{array}{cc} \\ & 2s \\ 2s-1 & \end{array} 
  \begin{array}{cc} & 2s+3 \\ 2s+2 & \\ 2s+2 & \end{array} 
  \begin{array}{cc} & 2s+6 \\ 2s+5 & \\ 2s+5 & \end{array} 
  \begin{array}{cc} \\ \\ \cdots \end{array}  \\[10pt]
  & \begin{array}{cc} & 2s+3n_3 \\ 2s+3n_3-1 & \\ 2s+3n_3-1 & \end{array} 
  \begin{array}{cc} \\ \mathbf{2s+3n_3+2} \\ \mathbf{2s+3n_3+2} \end{array} 
  \left. \begin{array}{cc} \\ \\ (\textrm{ parts } \geq 2s+3n_3+4, \textrm{ all 1- or 2-clusters }) \end{array}
  \right\} 
  \begin{array}{cc} \\ \\ , \end{array} 
 \end{align*}
 or
 \begin{align*}
  & \left\{  \begin{array}{cc} \\ & 2 \\ 1 & \end{array} \right.
  \begin{array}{cc} \\ & 4 \\ 3 & \end{array} 
  \begin{array}{cc} \\ \\ \cdots \end{array} 
  \begin{array}{cc} \\ & 2s \\ 2s-1 & \end{array} 
  \begin{array}{cc} \\ \\ 2s+1 \end{array} 
  \begin{array}{cc} & 2s+4 \\ 2s+3 & \\ 2s+3 & \end{array} 
  \begin{array}{cc} & 2s+7 \\ 2s+6 & \\ 2s+6 & \end{array} 
  \begin{array}{cc} \\ \\ \cdots \end{array}  \\[10pt]
  & \begin{array}{cc} & 2s+3n_3+1 \\ 2s+3n_3 & \\ 2s+3n_3 & \end{array} 
  \begin{array}{cc} \\ \mathbf{2s+3n_3+3} \\ \mathbf{2s+3n_3+3} \end{array} 
  \left. \begin{array}{cc} \\ \\ (\textrm{ parts } \geq 2s+3n_3+5, \textrm{ all 1- or 2-clusters }) \end{array}
  \right\} 
  \begin{array}{cc} \\ \\ . \end{array} 
 \end{align*}
 We record the number of required moves as $\eta_{s+1}-1$.  
 If $n_3 > 0$, the final backward move involves prestidigitating the 2-cluster 
 through the 3-clusters as follows. 
 After one more backward move of the 2nd kind on the $(s+1)$th smallest 2-cluster, 
 say in the former configuration, 
 \begin{align*}
  & \left\{  \begin{array}{cc} \\ & 2 \\ 1 & \end{array} \right.
  \begin{array}{cc} \\ & 4 \\ 3 & \end{array} 
  \begin{array}{cc} \\ \\ \cdots \end{array} 
  \begin{array}{cc} \\ & 2s \\ 2s-1 & \end{array} 
  \begin{array}{cc} & 2s+3 \\ 2s+2 & \\ 2s+2 & \end{array} 
  \begin{array}{cc} & 2s+6 \\ 2s+5 & \\ 2s+5 & \end{array} 
  \begin{array}{cc} \\ \\ \cdots \end{array}  \\[10pt]
  & \underbrace{
  \begin{array}{cc} & 2s+3n_3 \\ 2s+3n_3-1 & \\ 2s+3n_3-1 & \end{array} 
  \begin{array}{cc} \\ & \mathbf{2s+3n_3+2} \\ \mathbf{2s+3n_3+1} & \end{array} 
  }_{!} \\[10pt]
  & \left. \begin{array}{cc} \\ \\ (\textrm{ parts } \geq 2s+3n_3+4, \textrm{ all 1- or 2-clusters }) \end{array}
  \right\}  \textrm{ (temporarily) }
 \end{align*}
 \begin{align*}
  \big\downarrow \textrm{ adjustment }
 \end{align*} 
 \begin{align*}
  & \left\{  \begin{array}{cc} \\ & 2 \\ 1 & \end{array} \right.
  \begin{array}{cc} \\ & 4 \\ 3 & \end{array} 
  \begin{array}{cc} \\ \\ \cdots \end{array} 
  \begin{array}{cc} \\ & 2s \\ 2s-1 & \end{array} 
  \begin{array}{cc} & 2s+3 \\ 2s+2 & \\ 2s+2 & \end{array} 
  \begin{array}{cc} & 2s+6 \\ 2s+5 & \\ 2s+5 & \end{array} 
  \begin{array}{cc} \\ \\ \cdots \end{array}  \\[10pt]
  & \underbrace{
  \begin{array}{cc} & 2s+3n_3-3 \\ 2s+3n_3-4 & \\ 2s+3n_3-4 & \end{array} 
  \begin{array}{cc} \\ & \mathbf{2s+3n_3-1} \\ \mathbf{2s+3n_3-2} & \end{array} 
  }_{!} \\[10pt]
  & \begin{array}{cc} & 2s+3n_3+2 \\ 2s+3n_3+1 & \\ 2s+3n_3+1 & \end{array}
  \left. \begin{array}{cc} \\ \\ (\textrm{ parts } \geq 2s+3n_3+4) \end{array}
  \right\} \textrm{ (temporarily) }
 \end{align*}
 \begin{align*}
  \big\downarrow \textrm{ after } n_3 -1 \textrm{ similar adjustments }
 \end{align*} 
 \begin{align*}
  & \left\{  \begin{array}{cc} \\ & 2 \\ 1 & \end{array} \right.
  \begin{array}{cc} \\ & 4 \\ 3 & \end{array} 
  \begin{array}{cc} \\ \\ \cdots \end{array} 
  \begin{array}{cc} \\ & \mathbf{2s+2} \\ \mathbf{2s+1} & \end{array} 
  \begin{array}{cc} & 2s+5 \\ 2s+4 & \\ 2s+4 & \end{array} 
  \begin{array}{cc} & 2s+8 \\ 2s+7 & \\ 2s+7 & \end{array} 
  \begin{array}{cc} \\ \\ \cdots \end{array}  \\[10pt]
  & \begin{array}{cc} & 2s+3n_3+2 \\ 2s+3n_3+1 & \\ 2s+3n_3+1 & \end{array} 
  \left. \begin{array}{cc} \\ \\ (\textrm{ parts } \geq 2s+3n_3+4, \textrm{ all 1- or 2-clusters }) \end{array}
  \right\} 
  \begin{array}{cc} \\ \\ , \end{array} 
 \end{align*}
 and the $(s+1)$st 2-cluster is stowed in its proper place.  
 This determines $\eta_{s+1}$, which is positive.  
 The second case is almost the same except that the Gordon marking has to be updated after the final adjustment.  
 We repeat the process, and record $\eta_{s+2}$, $\eta_{s+3}$, \ldots, $\eta_{n_2}$.  
 We note that the total weight of $\lambda$ and $\eta$ remain constant, 
 because any drop in the weight of $\lambda$ is registered in $\eta$ in the same amount, 
 thanks to the definition of the backward move of the 2nd kind, 
 namely, Definition \ref{defBackwdMove}.  
 
 At this point, we should justify the fact that $\eta$ cannot have repeated odd parts.  
 Initially, and after any moves followed by a streak of adjustments, 
 $\lambda$ has satisfied the difference conditions given by $kr_5(n ,m)$.  
 Also, the moves on the 2- and 3-clusters are performed in the exact reverse order.  
 As we showed in the forward moves on the 2-clusters, 
 any repeated odd part in $\eta$ will result in a violation of the said difference conditions.  
 Moreover, the violation precisely occurs when $\eta$ has repeated odd parts.  
 Thus, $\eta$ as constructed above cannot have repeated odd parts.  
 
 So far, the intermediate partition looks like
 \begin{align}
 \nonumber
  & \left\{  \begin{array}{cc} \\ & 2 \\ 1 & \end{array} \right.
  \begin{array}{cc} \\ & 4 \\ 3 & \end{array} 
  \begin{array}{cc} \\ \\ \cdots \end{array} 
  \begin{array}{cc} \\ & 2n_2 \\ 2n_2-1 & \end{array} 
  \begin{array}{cc} & 2n_2+3 \\ 2n_2+2 & \\ 2n_2+2 & \end{array} 
  \begin{array}{cc} & 2n_2+6 \\ 2n_2+5 & \\ 2n_2+5 & \end{array} 
  \begin{array}{cc} \\ \\ \cdots \end{array}  \\[10pt]
 \label{intermPtnkr_5-back-Conf3}
  & \begin{array}{cc} & 2n_2+3n_3 \\ 2n_2+3n_3-1 & \\ 2n_2+3n_3-1 & \end{array} 
  \left. \begin{array}{cc} \\ \\ (\textrm{ parts } \geq 2n_2+3n_3+2, \textrm{ all 1-clusters }) \end{array}
  \right\}
  \begin{array}{cc} \\ \\ , \end{array} 
 \end{align}
 or
 \begin{align}
 \nonumber
  & \left\{  \begin{array}{cc} \\ & 2 \\ 1 & \end{array} \right.
  \begin{array}{cc} \\ & 4 \\ 3 & \end{array} 
  \begin{array}{cc} \\ \\ \cdots \end{array} 
  \begin{array}{cc} \\ & 2n_2 \\ 2n_2-1 & \end{array} 
  \begin{array}{cc} \\ \\ 2n_2+1 \end{array} 
  \begin{array}{cc} & 2n_2+4 \\ 2n_2+3 & \\ 2n_2+3 & \end{array} 
  \begin{array}{cc} & 2n_2+7 \\ 2n_2+6 & \\ 2n_2+6 & \end{array} \\[10pt]
 \label{intermPtnkr_5-back-Conf4}
  & \begin{array}{cc} \\ \\ \cdots \end{array}
  \begin{array}{cc} & 2n_2+3n_3+1 \\ 2n_2+3n_3 & \\ 2n_2+3n_3 & \end{array} 
  \left. \begin{array}{cc} \\ \\ (\textrm{ parts } \geq 2n_2+3n_3+3, \textrm{ all 1-clusters }) \end{array}
  \right\}
  \begin{array}{cc} \\ \\ , \end{array} 
 \end{align}
 where $n_2$, $n_3$, or both, are possibly zero.  
 
 In \eqref{intermPtnkr_5-back-Conf4}, we simply start by setting $\mu_1 = 0$, 
 because the smallest 1-cluster is already as small as it can be.  
 It cannot be moved further back without vanishing or messing up the Gordon marking; 
 therefore changing at least one of $n_1$, $n_2$ or $n_3$.  
 
 In \eqref{intermPtnkr_5-back-Conf3}, we first subtract the necessary amount from the smallest 1-cluster, 
 and record the necessary number of moves as $\mu_1-1$.  
 The partition becomes 
 \begin{align*}
  & \left\{  \begin{array}{cc} \\ & 2 \\ 1 & \end{array} \right.
  \begin{array}{cc} \\ & 4 \\ 3 & \end{array} 
  \begin{array}{cc} \\ \\ \cdots \end{array} 
  \begin{array}{cc} \\ & 2n_2 \\ 2n_2-1 & \end{array} 
  \begin{array}{cc} & 2n_2+3 \\ 2n_2+2 & \\ 2n_2+2 & \end{array} 
  \begin{array}{cc} & 2n_2+6 \\ 2n_2+5 & \\ 2n_2+5 & \end{array} 
  \begin{array}{cc} \\ \\ \cdots \end{array}  \\[10pt]
  & \begin{array}{cc} & 2n_2+3n_3 \\ 2n_2+3n_3-1 & \\ 2n_2+3n_3-1 & \end{array} 
  \begin{array}{cc} \\ \\ \mathbf{2n_2+3n_3+2} \end{array} 
  \left. \begin{array}{cc} \\ \\ (\textrm{ parts } \geq 2n_2+3n_3+4, \textrm{ all 1-clusters }) \end{array}
  \right\}
  \begin{array}{cc} \\ \\ . \end{array} 
 \end{align*}
 We then perform one more deduction on the smallest 1-cluster, 
 followed by prestidigitating that 1-cluster through the 3-clusters, 
 hence obtaining $\mu_1$.  
 \begin{align*}
  & \left\{  \begin{array}{cc} \\ & 2 \\ 1 & \end{array} \right.
  \begin{array}{cc} \\ & 4 \\ 3 & \end{array} 
  \begin{array}{cc} \\ \\ \cdots \end{array} 
  \begin{array}{cc} \\ & 2n_2 \\ 2n_2-1 & \end{array} 
  \begin{array}{cc} & 2n_2+3 \\ 2n_2+2 & \\ 2n_2+2 & \end{array} 
  \begin{array}{cc} & 2n_2+6 \\ 2n_2+5 & \\ 2n_2+5 & \end{array} 
  \begin{array}{cc} \\ \\ \cdots \end{array}  \\[10pt]
  & \underbrace{
  \begin{array}{cc} & 2n_2+3n_3 \\ 2n_2+3n_3-1 & \\ 2n_2+3n_3-1 & \end{array} 
  \begin{array}{cc} \\ \\ \mathbf{2n_2+3n_3+1} \end{array} 
  }_{!} 
  \left. \begin{array}{cc} \\ \\ (\textrm{ parts } \geq 2n_2+3n_3+4, \textrm{ all 1-clusters }) \end{array}
  \right\}
 \end{align*}
 \begin{align*}
  \big\downarrow \textrm{ adjustment }
 \end{align*} 
 \begin{align*}
  & \left\{  \begin{array}{cc} \\ & 2 \\ 1 & \end{array} \right.
  \begin{array}{cc} \\ & 4 \\ 3 & \end{array} 
  \begin{array}{cc} \\ \\ \cdots \end{array} 
  \begin{array}{cc} \\ & 2n_2 \\ 2n_2-1 & \end{array} 
  \begin{array}{cc} & 2n_2+3 \\ 2n_2+2 & \\ 2n_2+2 & \end{array} 
  \begin{array}{cc} & 2n_2+6 \\ 2n_2+5 & \\ 2n_2+5 & \end{array} 
  \begin{array}{cc} \\ \\ \cdots \end{array}  \\[10pt]
  & \underbrace{
  \begin{array}{cc} & 2n_2+3n_3-3 \\ 2n_2+3n_3-4 & \\ 2n_2+3n_3-4 & \end{array} 
  \begin{array}{cc} \\ \\ \mathbf{2n_2+3n_3-2} \end{array} 
  }_{!} 
  \begin{array}{cc} & 2n_2+3n_3+1 \\ 2n_2+3n_3 & \\ 2n_2+3n_3 & \end{array} \\[10pt]
  & \left. \begin{array}{cc} \\ \\ (\textrm{ parts } \geq 2n_2+3n_3+4, \textrm{ all 1-clusters }) \end{array}
  \right\}
 \end{align*}
 \begin{align*}
  \big\downarrow \textrm{ after } n_3-1 \textrm{ similar adjustments }
 \end{align*} 
 \begin{align*}
  & \left\{  \begin{array}{cc} \\ & 2 \\ 1 & \end{array} \right.
  \begin{array}{cc} \\ & 4 \\ 3 & \end{array} 
  \begin{array}{cc} \\ \\ \cdots \end{array} 
  \begin{array}{cc} \\ & 2n_2 \\ 2n_2-1 & \end{array} 
  \begin{array}{cc} \\ \\ \mathbf{2n_2+1} \end{array} 
  \begin{array}{cc} & 2n_2+4 \\ 2n_2+3 & \\ 2n_2+3 & \end{array} 
  \begin{array}{cc} & 2n_2+7 \\ 2n_2+6 & \\ 2n_2+6 & \end{array} 
  \begin{array}{cc} \\ \\ \cdots \end{array}  \\[10pt]
  & \begin{array}{cc} & 2n_2+3n_3+1 \\ 2n_2+3n_3 & \\ 2n_2+3n_3 & \end{array} 
  \left. \begin{array}{cc} \\ \\ (\textrm{ parts } \geq 2n_2+3n_3+4 \textrm{ all 1-clusters }) \end{array}
  \right\}
  \begin{array}{cc} \\ \\ , \end{array} 
 \end{align*} 
 arriving at \eqref{intermPtnkr_5-back-Conf4} with $\mu_1 > 0$.  
 
 We continue with subtracting $\mu_i$ from the $i$th smallest 1-cluster for $i = 2, 3, \ldots, n_1$ 
 in the given order, to obtain the base partition $\beta$ as \eqref{baseptnKR5-n1positive}.  
 Because the pairwise difference of 1-clusters are at least two, 
 we immediately get $\mu_2 \leq \mu_3 \leq \cdots \leq \mu_{n_1}$.  
 To see that $\mu_1 \leq \mu_2$, 
 simply notice that without the final backward move involving 
 the prestidigitation of the smallest 1-cluster
 through the 3-clusters, we would have $\mu_1 - 1 \leq \mu_2 - 1 \leq \cdots \leq \mu_{n_1} - 1$.  
 If there were no 1-clusters, we would have stopped at \eqref{intermPtnkr_5-back-Conf3}, 
 which incidentally would have been the base partition $\beta$, 
 and declare $\mu$ the empty partition.  
 This yields the quadruple $(\beta, \mu, \eta, \nu)$ we have been looking for, 
 given $\lambda$ counted by $kr_5(n, m)$, and concludes the proof.  
\end{proof}

  {\bf Example: } 
  Following the notation in the proof of Theorem \ref{thmKR5}, 
  let's take the base partition $\beta$ having 
  $n_1 = 3$ 1-clusters, $n_2 = 2$ 2-clusters, and $n_3 = 2$ 3-clusters.  
  Assume that $\mu = 1+1+1$, $\eta = 0+5$, and $\nu = 3+9$.  
  \begin{align*}
   \beta = 
   \left\{ 
   \begin{array}{cc} & \\ & 2 \\ 1 & \end{array}
   \begin{array}{cc} & \\ & 4 \\ 3 & \end{array}
   \begin{array}{cc} \\ \\ 5 \end{array}
   \begin{array}{cc} & 8 \\ 7 & \\ 7 & \end{array}
   \begin{array}{cc} & 11 \\ 10 & \\ 10 & \end{array}
   \begin{array}{cc} \\ \\ 13 \end{array}
   \begin{array}{cc} \\ \\ \mathbf{15} \end{array}
   \right\}
  \end{align*}
  The weight of $\beta$ is 96.  
  
  We first incorporate $\mu_3$ and $\mu_2$ on the two largest 1-clusters, 
  which are simple additions.  
  \begin{align*}
   \left\{ 
   \begin{array}{cc} & \\ & 2 \\ 1 & \end{array}
   \begin{array}{cc} & \\ & 4 \\ 3 & \end{array}
   \begin{array}{cc} \\ \\ \mathbf{5} \end{array}
   \begin{array}{cc} & 8 \\ 7 & \\ 7 & \end{array}
   \begin{array}{cc} & 11 \\ 10 & \\ 10 & \end{array}
   \begin{array}{cc} \\ \\ 14 \end{array}
   \begin{array}{cc} \\ \\ 16 \end{array}
   \right\}
  \end{align*}
  We then perform the $\mu_1 = 1$ forward move on the smallest 1-cluster, 
  and watch it being prestidigitated through the 3-clusters.  
  \begin{align*}
   \left\{ 
   \begin{array}{cc} & \\ & 2 \\ 1 & \end{array} \right.
   \begin{array}{cc} & \\ & 4 \\ 3 & \end{array}
   \underbrace{
   \begin{array}{cc} \\ \\ \mathbf{6} \end{array}
   \begin{array}{cc} & 8 \\ 7 & \\ 7 & \end{array}
   }_{!}
   \begin{array}{cc} & 11 \\ 10 & \\ 10 & \end{array}
   \begin{array}{cc} \\ \\ 14 \end{array}
   \left. \begin{array}{cc} \\ \\ 16 \end{array}
   \right\}
  \end{align*}
  \begin{align*}
   \big\downarrow \textrm{ adjustment }
  \end{align*}
  \begin{align*}
   \left\{ 
   \begin{array}{cc} & \\ & 2 \\ 1 & \end{array} \right.
   \begin{array}{cc} & \\ & 4 \\ 3 & \end{array}
   \begin{array}{cc} & 7 \\ 6 & \\ 6 & \end{array}
   \underbrace{
   \begin{array}{cc} \\ \\ \mathbf{9} \end{array}
   \begin{array}{cc} & 11 \\ 10 & \\ 10 & \end{array}
   }_{!}
   \begin{array}{cc} \\ \\ 14 \end{array}
   \left. \begin{array}{cc} \\ \\ 16 \end{array}
   \right\}
  \end{align*}
  \begin{align*}
   \big\downarrow \textrm{ adjustment }
  \end{align*}
  \begin{align*}
   \left\{ 
   \begin{array}{cc} & \\ & 2 \\ 1 & \end{array} \right.
   \begin{array}{cc} & \\ & 4 \\ 3 & \end{array}
   \begin{array}{cc} & 7 \\ 6 & \\ 6 & \end{array}
   \begin{array}{cc} & 10 \\ 9 & \\ 9 & \end{array}
   \begin{array}{cc} \\ \\ \mathbf{12} \end{array}
   \begin{array}{cc} \\ \\ 14 \end{array}
   \left. \begin{array}{cc} \\ \\ 16 \end{array}
   \right\}
  \end{align*}
  This completes the incorporation of $\mu$ as forward moves on the 1-clusters.  
  Next, we turn to $\eta = 0 + 5$.  
  The larger 2-cluster will be moved 5 times forward.  
  The first of those moves will involve prestidigitation through the 3-clusters.  
  The smaller 2-cluster will stay put, thanks to $\eta_1$ being zero.  
  \begin{align*}
   \big\downarrow \textrm{ the first move forward on the larger 2-cluster }
  \end{align*}
  \begin{align*}
   \left\{ 
   \begin{array}{cc} & \\ & 2 \\ 1 & \end{array} \right.
   \underbrace{
   \begin{array}{cc} \\ \mathbf{4} \\ \mathbf{4} \end{array}
   \begin{array}{cc} & 7 \\ 6 & \\ 6 & \end{array}
   }_{!}
   \begin{array}{cc} & 10 \\ 9 & \\ 9 & \end{array}
   \begin{array}{cc} \\ \\ 12 \end{array}
   \begin{array}{cc} \\ \\ 14 \end{array}
   \left. \begin{array}{cc} \\ \\ 16 \end{array}
   \right\}
  \end{align*}
  \begin{align*}
   \big\downarrow \textrm{ adjustment }
  \end{align*}
  \begin{align*}
   \left\{ 
   \begin{array}{cc} & \\ & 2 \\ 1 & \end{array} \right.
   \begin{array}{cc} & 5 \\ 4 & \\ 4 & \end{array}
   \underbrace{
   \begin{array}{cc} \\ \mathbf{7} \\ \mathbf{7} \end{array}
   \begin{array}{cc} & 10 \\ 9 & \\ 9 & \end{array}
   }_{!}
   \begin{array}{cc} \\ \\ 12 \end{array}
   \begin{array}{cc} \\ \\ 14 \end{array}
   \left. \begin{array}{cc} \\ \\ 16 \end{array}
   \right\}
  \end{align*}
  \begin{align*}
   \big\downarrow \textrm{ adjustment }
  \end{align*}
  \begin{align*}
   \left\{ 
   \begin{array}{cc} & \\ & 2 \\ 1 & \end{array} \right.
   \begin{array}{cc} & 5 \\ 4 & \\ 4 & \end{array}
   \begin{array}{cc} & 8 \\ 7 & \\ 7 & \end{array}
   \begin{array}{cc} \\ \mathbf{10} \\ \mathbf{10} \end{array}
   \begin{array}{cc} \\ \\ 12 \end{array}
   \begin{array}{cc} \\ \\ 14 \end{array}
   \left. \begin{array}{cc} \\ \\ 16 \end{array}
   \right\}
  \end{align*}
  \begin{align*}
   \big\downarrow \textrm{ four more moves on the larger 2-cluster }
  \end{align*}
  \begin{align*}
   \left\{ 
   \begin{array}{cc} & \\ & 2 \\ 1 & \end{array} \right.
   \begin{array}{cc} & 5 \\ 4 & \\ 4 & \end{array}
   \begin{array}{cc} & \mathbf{8} \\ \mathbf{7} & \\ \mathbf{7} & \end{array}
   \begin{array}{cc} \\ \\ 10 \end{array}
   \begin{array}{cc} \\ \\ 12 \end{array}
   \begin{array}{cc} \\ 14 \\ 14 \end{array}
   \left. \begin{array}{cc} \\ \\ 16 \end{array}
   \right\}
  \end{align*}
  Finally, we use $\nu = 3 + 9$ 
  to move the larger 3-cluster $\frac{1}{3}\nu_2 = 3$ times forward, 
  and then the smaller 3-cluster $\frac{1}{3}\nu_1 = 1$ times forward.  
  \begin{align*}
   \big\downarrow \textrm{ the first forward move on the larger 3-cluster }
  \end{align*}
  \begin{align*}
   \left\{ 
   \begin{array}{cc} & \\ & 2 \\ 1 & \end{array} \right.
   \begin{array}{cc} & 5 \\ 4 & \\ 4 & \end{array}
   \underbrace{
   \begin{array}{cc} & \mathbf{9} \\ \mathbf{8} & \\ \mathbf{8} & \end{array}
   \begin{array}{cc} \\ \\ 10 \end{array}
   }_{!}
   \begin{array}{cc} \\ \\ 12 \end{array}
   \begin{array}{cc} \\ 14 \\ 14 \end{array}
   \left. \begin{array}{cc} \\ \\ 16 \end{array}
   \right\}
  \end{align*}
  \begin{align*}
   \big\downarrow \textrm{ adjustment }
  \end{align*}
  \begin{align*}
   \left\{ 
   \begin{array}{cc} & \\ & 2 \\ 1 & \end{array} \right.
   \begin{array}{cc} & 5 \\ 4 & \\ 4 & \end{array}
   \begin{array}{cc} \\ \\ 7 \end{array}
   \begin{array}{cc} & \mathbf{10} \\ \mathbf{9} & \\ \mathbf{9} & \end{array}
   \begin{array}{cc} \\ \\ 12 \end{array}
   \begin{array}{cc} \\ 14 \\ 14 \end{array}
   \left. \begin{array}{cc} \\ \\ 16 \end{array}
   \right\}
  \end{align*}
  \begin{align*}
   \big\downarrow \textrm{ the second forward move on the larger 3-cluster }
  \end{align*}
  \begin{align*}
   \left\{ 
   \begin{array}{cc} & \\ & 2 \\ 1 & \end{array} \right.
   \begin{array}{cc} & 5 \\ 4 & \\ 4 & \end{array}
   \begin{array}{cc} \\ \\ 7 \end{array}
   \underbrace{
   \begin{array}{cc} & \mathbf{11} \\ \mathbf{10} & \\ \mathbf{10} & \end{array}
   \begin{array}{cc} \\ \\ 12 \end{array}
   }_{!}
   \begin{array}{cc} \\ 14 \\ 14 \end{array}
   \left. \begin{array}{cc} \\ \\ 16 \end{array}
   \right\}
  \end{align*}
  \begin{align*}
   \big\downarrow \textrm{ adjustment }
  \end{align*}
  \begin{align*}
   \left\{ 
   \begin{array}{cc} & \\ & 2 \\ 1 & \end{array} \right.
   \begin{array}{cc} & 5 \\ 4 & \\ 4 & \end{array}
   \begin{array}{cc} \\ \\ 7 \end{array}
   \begin{array}{cc} \\ \\ 9 \end{array}
   \begin{array}{cc} & \mathbf{12} \\ \mathbf{11} & \\ \mathbf{11} & \end{array}
   \begin{array}{cc} \\ 14 \\ 14 \end{array}
   \left. \begin{array}{cc} \\ \\ 16 \end{array}
   \right\}
  \end{align*}
  \begin{align*}
   \big\downarrow \textrm{ the third, and the last, forward move on the larger 3-cluster }
  \end{align*}
  \begin{align*}
   \left\{ 
   \begin{array}{cc} & \\ & 2 \\ 1 & \end{array} \right.
   \begin{array}{cc} & 5 \\ 4 & \\ 4 & \end{array}
   \begin{array}{cc} \\ \\ 7 \end{array}
   \begin{array}{cc} \\ \\ 9 \end{array}
   \underbrace{
   \begin{array}{cc} & \mathbf{13} \\ \mathbf{12} & \\ \mathbf{12} & \end{array}
   \begin{array}{cc} \\ 14 \\ 14 \end{array}
   }_{!}
   \left. \begin{array}{cc} \\ \\ 16 \end{array}
   \right\}
  \end{align*}
  \begin{align*}
   \big\downarrow \textrm{ adjustment }
  \end{align*}
  \begin{align*}
   \left\{ 
   \begin{array}{cc} & \\ & 2 \\ 1 & \end{array} \right.
   \begin{array}{cc} & 5 \\ 4 & \\ 4 & \end{array}
   \begin{array}{cc} \\ \\ 7 \end{array}
   \begin{array}{cc} \\ \\ 9 \end{array}
   \begin{array}{cc} \\ 11 \\ 11 \end{array}
   \underbrace{
   \begin{array}{cc} & \mathbf{15} \\ \mathbf{14} & \\ \mathbf{14} & \end{array}
   \begin{array}{cc} \\ \\ 16 \end{array}
   }_{!}
   \left. \begin{array}{cc} \\ \\ \end{array}
   \right\}
  \end{align*}
  \begin{align*}
   \big\downarrow \textrm{ adjustment }
  \end{align*}
  \begin{align*}
   \left\{ 
   \begin{array}{cc} & \\ & 2 \\ 1 & \end{array} \right.
   \begin{array}{cc} & \mathbf{5} \\ \mathbf{4} & \\ \mathbf{4} & \end{array}
   \begin{array}{cc} \\ \\ 7 \end{array}
   \begin{array}{cc} \\ \\ 9 \end{array}
   \begin{array}{cc} \\ 11 \\ 11 \end{array}
   \begin{array}{cc} \\ \\ 13 \end{array}
   \left. \begin{array}{cc} & 16 \\ 15 & \\ 15 & \end{array}
   \right\}
  \end{align*}
  \begin{align*}
   \big\downarrow \textrm{ one forward move on the smaller 3-cluster }
  \end{align*}
  \begin{align*}
   \left\{ 
   \begin{array}{cc} & \\ & 2 \\ 1 & \end{array} \right.
   \underbrace{
   \begin{array}{cc} & \mathbf{6} \\ \mathbf{5} & \\ \mathbf{5} & \end{array}
   \begin{array}{cc} \\ \\ 7 \end{array}
   }_{!}
   \begin{array}{cc} \\ \\ 9 \end{array}
   \begin{array}{cc} \\ 11 \\ 11 \end{array}
   \begin{array}{cc} \\ \\ 13 \end{array}
   \left. \begin{array}{cc} & 16 \\ 15 & \\ 15 & \end{array}
   \right\}
  \end{align*}
  \begin{align*}
   \big\downarrow \textrm{ adjustment }
  \end{align*}
  \begin{align*}
    \lambda = 
   \left\{ 
   \begin{array}{cc} & \\ & 2 \\ 1 & \end{array} \right.
   \begin{array}{cc} \\ \\ 4 \end{array}
   \begin{array}{cc} & \mathbf{7} \\ \mathbf{6} & \\ \mathbf{6} & \end{array}
   \begin{array}{cc} \\ \\ 9 \end{array}
   \begin{array}{cc} \\ 11 \\ 11 \end{array}
   \begin{array}{cc} \\ \\ 13 \end{array}
   \left. \begin{array}{cc} & 16 \\ 15 & \\ 15 & \end{array}
   \right\}
  \end{align*}
  The weight of $\lambda$, as expected is 116.  
  $\lambda$ has the sediment $\begin{array}{cc} & 2 \\ 1 & \end{array}$, 
  for the sole unmoved 2-cluster.  
  \begin{align*}
   \vert \lambda \vert = 116 = 96 + 3 + 5 + 12 
   = \vert \beta \vert + \vert \mu \vert + \vert \eta \vert + \vert \nu \vert
  \end{align*}


\begin{theorem}[cf. Kanade-Russell conjecture $I_6$]
\label{thmKR6}
  For $n, m \in \mathbb{N}$, let $kr_6(n, m)$ be the number of partitions of $n$ into $m$ parts
  with smallest part at least 2, at most one appearance of the part 2, 
  and difference at least three at distance three such that 
  if parts at distance two differ by at most one, 
  then their sum, together with the intermediate part, 
  is $\equiv 2 \pmod{3}$.  
  Then, 
  \begin{align}
  \nonumber
  & \sum_{m,n \geq 0} kr_6(n,m) q^n x^m \\
  \label{eqGF_KR6}
  = & \sum_{n_1, n_2, n_3 \geq 0} \frac{ 
      q^{ ( 9n_3^2 + 7n_3 )/2 + 2n_2^2 + 3n_2 + n_1^2 + n_1 + 6n_3n_2 + 3n_3n_1 + 2n_2n_1 } 
      (-q; q^2)_{n_2} x^{3n_3 + 2n_2 + n_1} }
    { (q; q)_{n_1} (q^2; q^2)_{n_2} (q^3; q^3)_{n_3} }.  
  \end{align}
\end{theorem}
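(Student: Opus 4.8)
The plan is to follow the proof of Theorem \ref{thmKR5} \emph{mutatis mutandis}. Throughout, $n_r$ denotes the number of $r$-clusters of the partition under consideration, for $r = 1, 2, 3$, and $\lambda$ a partition counted by $kr_6(n, m)$. Exactly as there, $\lambda$ can only have $1$-, $2$-, and $3$-clusters; and among the three admissible shapes of a $3$-cluster, namely $\begin{array}{cc} & k \\ k-1 & \\ k-1 & \end{array}$, $\begin{array}{cc} & k \\ & k \\ k-1 & \end{array}$ and $\begin{array}{cc} k \\ k \\ k \end{array}$, the requirement that the sum of the parts at distance two, together with the intermediate part, be $\equiv 2 \pmod 3$ singles out the second one. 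Since the smallest part must be at least $2$, the smallest $3$-cluster is $\begin{array}{cc} & 3 \\ & 3 \\ 2 & \end{array}$, which uses the part $2$ exactly once, as allowed; the $2$- and $1$-clusters carry no sum constraint. An exchange argument, as in the proof of Theorem \ref{thmKR5}, shows that the least-weight partition with prescribed $n_1, n_2, n_3$ lists all $3$-clusters first, then all $2$-clusters, then all $1$-clusters; here, unlike in Theorem \ref{thmKR5}, leading with a $3$-cluster is cheaper than leading with a $2$- or $1$-cluster precisely because the cheapest $3$-cluster already claims the single admissible copy of the part $2$. Thus a single base partition serves for all $n_1, n_2, n_3 \geq 0$:
\begin{align*}
 \beta = \left\{
  \begin{array}{cc} & 3 \\ & 3 \\ 2 & \end{array}
  \begin{array}{cc} & 6 \\ & 6 \\ 5 & \end{array}
  \begin{array}{cc} \\ \\ \cdots \end{array}
  \begin{array}{cc} & 3n_3 \\ & 3n_3 \\ 3n_3-1 & \end{array}
  \begin{array}{cc} & \\ & 3n_3+3 \\ 3n_3+2 & \end{array}
  \begin{array}{cc} \\ \\ \cdots \end{array}
  \begin{array}{cc} & \\ & 3n_3+2n_2+1 \\ 3n_3+2n_2 & \end{array}
  \begin{array}{cc} \\ \\ 3n_3+2n_2+2 \end{array}
  \begin{array}{cc} \\ \\ \cdots \end{array}
  \begin{array}{cc} \\ \\ 3n_3+2n_2+2n_1 \end{array}
 \right\},
\end{align*}
and a direct computation gives $\vert \beta \vert = ( 9n_3^2 + 7n_3 )/2 + 2n_2^2 + 3n_2 + n_1^2 + n_1 + 6n_3 n_2 + 3n_3 n_1 + 2n_2 n_1$, the exponent of $q$ in \eqref{eqGF_KR6}.

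As in the proof of Theorem \ref{thmKR5}, one then shows that $\lambda$ corresponds bijectively to a quadruple $(\beta, \mu, \eta, \nu)$, where $\beta$ is the base partition above, $\mu$ is a partition with $n_1$ parts (zeros allowed), $\eta$ is a partition with $n_2$ parts in which no odd part repeats, $\nu$ is a partition into multiples of three with $n_3$ parts (zeros allowed), and $\vert \lambda \vert = \vert \beta \vert + \vert \mu \vert + \vert \eta \vert + \vert \nu \vert$. Since $\mu$, $\eta$, and $\nu$ are generated by $1/(q; q)_{n_1}$, by $(-q; q^2)_{n_2}/(q^2; q^2)_{n_2}$ (this is where Proposition \ref{propDistinctOdds} is used), and by $1/(q^3; q^3)_{n_3}$, respectively, weighting by $q^{\vert \beta \vert} x^{3n_3 + 2n_2 + n_1}$ and summing over $n_1, n_2, n_3 \geq 0$ yields \eqref{eqGF_KR6}.

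Given a quadruple, one recovers $\lambda$ by moving the $i$th largest $1$-cluster forward the $i$th largest part of $\mu$ times (plain additions, since the $1$-clusters are already the largest parts and the gaps only widen), then the $i$th largest $2$-cluster forward the $i$th largest part of $\eta$ times, and finally the $i$th largest $3$-cluster forward $\frac{1}{3}$ times the $i$th largest part of $\nu$; the inverse decomposition performs backward moves on the $3$-, then $2$-, then $1$-clusters in the opposite order. Two things distinguish this from Theorem \ref{thmKR5}: the $3$-clusters now lie below everything, so that their forward moves must prestidigitate them upward through the $2$- and $1$-clusters, and the new $3$-cluster shape $\begin{array}{cc} & k \\ & k \\ k-1 & \end{array}$ changes the details of the attendant adjustment cases. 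As before, the requirement that $\eta$ have no repeated odd part is exactly what forbids two successive $2$-clusters of the forms $\begin{array}{cc} a \\ a \end{array}$ and $\begin{array}{cc} a+2 \\ a+2 \end{array}$, which would break the difference-at-distance-three condition. The main (and only substantial) obstacle is the configuration-by-configuration check that the forward and backward moves are mutually inverse and that every intermediate partition honours the conditions defining $kr_6(n, m)$; since this runs entirely parallel to the corresponding part of the proof of Theorem \ref{thmKR5}, we omit the repetitive case analysis.
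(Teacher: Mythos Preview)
Your proposal is correct and follows the paper's own approach: the same single base partition $\beta$ (3-clusters first, then 2-clusters, then 1-clusters), the same bijection with quadruples $(\beta,\mu,\eta,\nu)$, and the same generating factors $1/(q;q)_{n_1}$, $(-q;q^2)_{n_2}/(q^2;q^2)_{n_2}$, $1/(q^3;q^3)_{n_3}$.

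One point of comparison worth noting: you describe the forward moves on the 3-clusters as having to ``prestidigitate them upward through the 2- and 1-clusters'' and list this as a distinguishing complication. In the paper's terminology this is not prestidigitation at all---a 3-cluster passing a 1- or 2-cluster is the ordinary adjustment already catalogued in the proof of Theorem~\ref{thmKR5} (an $r$-cluster may routinely pass an $s$-cluster when $r>s$). Prestidigitation refers specifically to the exceptional situation in Theorem~\ref{thmKR5} where a 1- or 2-cluster, sitting \emph{below} the block of 3-clusters in $\beta$, must be threaded \emph{through} them on its first forward move. Because here the 3-clusters are already the smallest clusters in $\beta$, that phenomenon never arises; the paper in fact singles this out as the reason the present proof is \emph{simpler} than that of Theorem~\ref{thmKR5}, not more intricate.
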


\begin{proof}
 The proof is a simpler version of the proof of Theorem \ref{thmKR5}.  
 There is only one type of base partition $\beta$.  
 \begin{align*}
  & \left\{ \begin{array}{cc} & 3 \\ & 3 \\ 2 & \end{array} \right.
  \begin{array}{cc} & 6 \\ & 6 \\ 5 & \end{array}
  \begin{array}{cc} \\ \\ \cdots \end{array}
  \begin{array}{cc} & 3n_3 \\ & 3n_3 \\ 3n_3-1 & \end{array}
  \begin{array}{cc} \\ & 3n_3 + 3 \\ 3n_3 + 2 &  \end{array}
  \begin{array}{cc} \\ & 3n_3 + 5 \\ 3n_3 + 4 &  \end{array}
  \begin{array}{cc} \\ \\ \cdots \end{array} \\[10pt]
  & \begin{array}{cc} \\ & 3n_3 + 2n_2 + 1 \\ 3n_3 + 2n_2 &  \end{array}
  \begin{array}{cc} \\ \\ 3n_3 + 2n_2 + 2 \end{array}
  \begin{array}{cc} \\ \\ 3n_3 + 2n_2 + 4 \end{array}
  \begin{array}{cc} \\ \\ \cdots \end{array}
  \left. \begin{array}{cc} \\ \\ 3n_3 + 2n_2 + 2n_1 \end{array} \right\}
 \end{align*} 
 This partition has the minimum weight among all numerated by $kr_6(n, m)$ ,
 having $n_r$ $r$-clusters for $r = 1, 2, 3$.  
 Here, any $n_r$ may be zero.  
 Clearly, the only possible 3-clusters are 
 \begin{align*}
  \left\{ \begin{array}{cc} \\ \\ ( \textrm{ parts } \leq k - 2 ) \end{array}
  \begin{array}{cc} & k+1 \\ & k+1 \\ k & \end{array}
  \begin{array}{cc} \\ \\ ( \textrm{ parts } \geq k + 3 ) \end{array} \right\}
  \begin{array}{cc} \\ \\ . \end{array}
 \end{align*} 
 The rest of the proof is the  same as that of Theorem \ref{thmKR5}.  
 One does not even need to prestidigitate the 1- or 2- clusters through the 3-clusters.  
\end{proof}

We now write generating functions for some similarly described enumerants, 
which are not listed in \cite{KR-conj} 
because they did not yield nice infinite products, hence partition identities.  
In their proofs, we indicate the extra details only.  

\begin{theorem}
\label{thmKRc1-2}
  For $n, m \in \mathbb{N}$, let $kr^c_{1-2}(n, m)$ be the number of partitions of $n$ into $m$ parts
  with difference at least three at distance three such that 
  if parts at distance two differ by at most one, 
  then their sum, together with the intermediate part, 
  is $\equiv 1 \pmod{3}$.  
  Then, 
  \begin{align}
  \nonumber
  & \sum_{m,n \geq 0} kr^c_{1-2}(n,m) q^n x^m \\
  \label{eqGF-KRc1-2}
  = & \sum_{\substack{n_1, n_3 \geq 0 \\ n_2 > 0}} \frac{ 
      q^{ ( 9n_3^2 - n_3 )/2 + 2n_2^2 + n_2 + n_1^2 + 6n_3n_2 + 3n_3n_1 + 2n_2n_1 - 1 } 
      (1 + q)(-q; q^2)_{n_2 - 1} \; x^{3n_3 + 2n_2 + n_1} }
    { (q; q)_{n_1} (q^2; q^2)_{n_2} (q^3; q^3)_{n_3} } \\
  \nonumber
  + & \sum_{n_1, n_3 \geq 0} \frac{ q^{ ( 9n_3^2 - n_3 )/2 + n_1^2 + 3n_3n_1 } x^{3n_3 + n_1} }
    { (q; q)_{n_1} (q^3; q^3)_{n_3} } \\
  \label{eqGF-KRc1-2-2nd}
  = & \sum_{n_1, n_2, n_3 \geq 0} \frac{ 
      q^{ ( 9n_3^2 - n_3 )/2 + 2n_2^2 + n_2 + n_1^2 + 6n_3n_2 + 3n_3n_1 + 2n_2n_1} 
      (-1/q; q^2)_{n_2} \; x^{3n_3 + 2n_2 + n_1} }
    { (q; q)_{n_1} (q^2; q^2)_{n_2} (q^3; q^3)_{n_3} }.  
  \end{align}
\end{theorem}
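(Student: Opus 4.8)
The plan is to run the argument of Theorem~\ref{thmKR5} almost verbatim, the only new phenomenon being that the part~$1$ may now occur twice, which forces one new type of smallest cluster. Given $\lambda$ counted by $kr^c_{1-2}(n,m)$ with $n_r$ $r$-clusters for $r=1,2,3$, I would decompose it into a quadruple $(\beta,\mu,\eta,\nu)$, where $\beta$ is the minimal-weight (base) partition with the prescribed cluster content satisfying the difference conditions of $kr^c_{1-2}$, $\mu$ is a partition into $n_1$ parts (zeros allowed) recording the forward moves on the $1$-clusters, $\nu$ is a partition into $n_3$ parts that are multiples of $3$ recording the forward moves on the $3$-clusters, and $\eta$ records the forward moves on the $2$-clusters; as in Theorem~\ref{thmKR5} the forward and backward moves are mutually inverse and aggregate-weight-preserving, $\mu$ is generated by $1/(q;q)_{n_1}$, and $\nu$ by $1/(q^3;q^3)_{n_3}$. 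The two sums in \eqref{eqGF-KRc1-2} arise from two families of base partitions separated by whether $n_2=0$ or $n_2>0$, exactly as the three sums of Theorem~\ref{thmKR3-1} came from different cluster constellations.

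For the base partitions, note first that, the sum-plus-middle condition being $\equiv1\pmod 3$, the only admissible $3$-clusters have the shape $(k-1,k-1,k)$ and the admissible $2$-clusters are as in Theorem~\ref{thmKR5}; the sole difference with $kr_5$ is that now $(1,1,2)$ is a legal $3$-cluster and $(1,1)$ is a legal $2$-cluster. When $n_2=0$ the (at most two) $1$'s can only lie inside the smallest $3$-cluster, so $\beta$ is $(1,1,2),(4,4,5),\dots,(3n_3-2,3n_3-2,3n_3-1)$ followed by the $1$-clusters $3n_3+1,3n_3+3,\dots,3n_3+2n_1-1$, of weight $(9n_3^2-n_3)/2+n_1^2+3n_3n_1$; with the generating functions of $\mu$ and $\nu$ this gives the second sum of \eqref{eqGF-KRc1-2}. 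When $n_2>0$ the two $1$'s instead form the smallest $2$-cluster, which is now the \emph{symmetric} cluster $(1,1)$ rather than the asymmetric $(1,2)$; for minimal weight it sits at the very bottom if $n_3=0$ and immediately above the $3$-clusters if $n_3>0$, with the remaining $2$-clusters $(3n_3+3,3n_3+4),(3n_3+5,3n_3+6),\dots$ and then the $1$-clusters $3n_3+2n_2+1,3n_3+2n_2+3,\dots$ lined up after it. The weight of this $\beta$ is $(9n_3^2-n_3)/2+2n_2^2+n_2+n_1^2+6n_3n_2+3n_3n_1+2n_2n_1-1$, the ``$-1$'' being exactly what is saved by using $(1,1)$ in place of $(1,2)$. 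As in Theorem~\ref{thmKR5} I would check that no other line-up of the clusters weighs less (inserting $1$- or $2$-clusters among $3$-clusters, or placing $2$-clusters below the $3$-clusters when $n_3>0$, strictly increases the weight). The genuinely new ingredient is the generating function of $\eta$ in the $n_2>0$ case: because the smallest $2$-cluster starts symmetric, the parity role of the smallest move $\eta_1$ is reversed relative to Theorem~\ref{thmKR5}, so that the obstruction ``two successive $2$-clusters cannot be moved equal odd numbers of times forward'' now constrains only $\eta_2,\dots,\eta_{n_2}$; a count in the spirit of Proposition~\ref{propDistinctOdds} then shows $\eta$ is generated by $(1+q)(-q;q^2)_{n_2-1}/(q^2;q^2)_{n_2}$. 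Multiplying by $1/(q;q)_{n_1}$, by $1/(q^3;q^3)_{n_3}$, and by the monomial recording the weight and length of $\beta$ yields the first sum of \eqref{eqGF-KRc1-2}.

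Finally, the equality of \eqref{eqGF-KRc1-2} and \eqref{eqGF-KRc1-2-2nd} is a one-line $q$-Pochhammer manipulation: $(-1/q;q^2)_{n_2}=1$ for $n_2=0$, while for $n_2\geq1$
\begin{align*}
 (-1/q;q^2)_{n_2}=(1+q^{-1})(-q;q^2)_{n_2-1}=q^{-1}(1+q)(-q;q^2)_{n_2-1},
\end{align*}
so splitting the single sum of \eqref{eqGF-KRc1-2-2nd} according to $n_2=0$ and $n_2\geq1$ reproduces, respectively, the second sum of \eqref{eqGF-KRc1-2} (the terms $2n_2^2+n_2$, $6n_3n_2$, $2n_2n_1$ all vanishing) and its first sum (the factor $q^{-1}$ being absorbed into the exponent, which is what produces the ``$-1$''). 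I expect the main obstacle to be the $n_2>0$ case: determining the correct minimal base partition when a symmetric $(1,1)$ coexists with $3$-clusters, and, above all, showing that the move analysis on the $2$-clusters produces precisely the generating function $(1+q)(-q;q^2)_{n_2-1}/(q^2;q^2)_{n_2}$ for $\eta$. Everything else is a routine transcription of the proofs of Theorems~\ref{thmKR5} and \ref{thmKR3-1}.
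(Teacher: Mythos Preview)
Your outline is essentially the paper's proof: the same split into $n_2=0$ versus $n_2>0$, the same base partitions (the $3$-clusters $(1,1,2),(4,4,5),\ldots$ at the bottom, the symmetric $2$-cluster $(3n_3+1,3n_3+1)$ immediately above them, then the asymmetric $2$-clusters and the $1$-clusters), the same weights, and your derivation of \eqref{eqGF-KRc1-2-2nd} from \eqref{eqGF-KRc1-2} is exactly the manipulation the paper has in mind.

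The one place your sketch and the paper diverge is the justification of the factor $(1+q)(-q;q^2)_{n_2-1}/(q^2;q^2)_{n_2}$ for $\eta$ when $n_2>0$. Your ``parity reversal of $\eta_1$'' heuristic points in the right direction but does not by itself pin down the admissible $\eta$'s. The paper instead uses a clean dichotomy: either the smallest $2$-cluster $(3n_3+1,3n_3+1)$ is left untouched as a sediment and only the remaining $n_2-1$ asymmetric $2$-clusters are moved exactly as in Theorem~\ref{thmKR5} (contributing $(-q;q^2)_{n_2-1}/(q^2;q^2)_{n_2-1}$), or that smallest cluster makes one \emph{extra} forward move to become asymmetric and thereafter all $n_2$ $2$-clusters are treated as in Theorem~\ref{thmKR5} (contributing $q\cdot(-q;q^2)_{n_2}/(q^2;q^2)_{n_2}$). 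Summing and simplifying gives the claimed factor. Detecting which branch a given $\lambda$ falls into is done by looking for a sediment $(3s+1,3s+1)$ on top of the packed $3$-clusters. One more small point: because here the $3$-clusters sit \emph{below} the $2$- and $1$-clusters in $\beta$ (unlike in Theorem~\ref{thmKR5}), no prestidigitation of $1$- or $2$-clusters through the $3$-clusters is needed; the ``routine transcription'' is actually a bit simpler than you suggest.
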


{\bf Remark: } Notice that no $\lambda$ enumerated by $kr^c_{1-2}(n, m)$ can have three occurrences of 1.  

\begin{proof}
 We will show \eqref{eqGF-KRc1-2} only.  
 \eqref{eqGF-KRc1-2-2nd} follows by standard algebraic manipulations.  

 The proof is similar to the proof of Theorem \ref{thmKR5}.  
 Two separate series are for two separate base partitions 
 for the cases $n_1, n_3 \geq 0, n_2 > 0$, and $n_1, n_3 \geq 0, n_2 = 0$.  
 Here, again, $n_r$ is the number of $r$-clusters for $r = 1, 2, 3$ 
 of the partition at hand.  
 
 In case $n_2 > 0$, the base partition $\beta$ is 
 \begin{align}
 \nonumber
  & \left\{ \begin{array}{cc} & 2 \\ 1 & \\ 1 & \end{array} \right.
  \begin{array}{cc} & 5 \\ 4 & \\ 4 & \end{array}
  \begin{array}{cc} \\ \\ \cdots \end{array}
  \begin{array}{cc} & 3n_3-1 \\ 3n_3-2 & \\ 3n_3-2 & \end{array}
  \begin{array}{cc} \\ 3n_3 + 1 \\ 3n_3 + 1  \end{array}
  \begin{array}{cc} \\ & 3n_3 + 4 \\ 3n_3 + 3 &  \end{array}
  \begin{array}{cc} \\ & 3n_3 + 6 \\ 3n_3 + 5 &  \end{array}
  \begin{array}{cc} \\ \\ \cdots \end{array} \\[10pt]
 \label{baseptnKRc1-2-1}
  & \begin{array}{cc} \\ & 3n_3 + 2n_2 \\ 3n_3 + 2n_2-1 &  \end{array}
  \begin{array}{cc} \\ \\ 3n_3 + 2n_2 + 1 \end{array}
  \begin{array}{cc} \\ \\ 3n_3 + 2n_2 + 3 \end{array}
  \begin{array}{cc} \\ \\ \cdots \end{array}
  \left. \begin{array}{cc} \\ \\ 3n_3 + 2n_2 + 2n_1-1 \end{array} \right\}
  \begin{array}{cc} \\ \\ , \end{array}
 \end{align} 
 with weight $( 9n_3^2 - n_3 )/2 + 2n_2^2 + n_2 + n_1^2 + 6n_3n_2 + 3n_3n_1 + 2n_2n_1 - 1 $.  
 
 When $n_2 = 0$, the base partition is 
 \begin{align}
 \label{baseptnKRc1-2-2}
  & \left\{ \begin{array}{cc} & 2 \\ 1 & \\ 1 & \end{array} \right.
  \begin{array}{cc} & 5 \\ 4 & \\ 4 & \end{array}
  \begin{array}{cc} \\ \\ \cdots \end{array}
  \begin{array}{cc} & 3n_3-1 \\ 3n_3-2 & \\ 3n_3-2 & \end{array}
  \begin{array}{cc} \\ \\ 3n_3  + 1 \end{array}
  \begin{array}{cc} \\ \\ 3n_3  + 3 \end{array}
  \begin{array}{cc} \\ \\ \cdots \end{array}
  \left. \begin{array}{cc} \\ \\ 3n_3 + 2n_1-1 \end{array} \right\}
  \begin{array}{cc} \\ \\ , \end{array}
 \end{align} 
 with weight $( 9n_3^2 - n_3 )/2 + n_1^2 + 3n_3n_1$.  
 This is not the $n_2 = 0$ case of \eqref{baseptnKRc1-2-1}.  
 
 The novelty in \eqref{baseptnKRc1-2-1} is that the smallest 2-cluster 
 $\begin{array}{cc} 3n_3 + 1 \\ 3n_3 + 1  \end{array}$ 
 has an extra move forward.  
 If that extra move is made, then the 2-clusters in the resulting partition 
 can be treated as in the proof of Theorem \ref{thmKR5}.  
 Without this extra move, we only have $n_2 - 1$ 2-clusters to move forward.  
 
 In a partition $\lambda$ enumerated by $kr^c_{1-2}(n, m)$, 
 we check if there is a sediment of the form
 \begin{align*}
  & \left\{ \begin{array}{cc} & 2 \\ 1 & \\ 1 & \end{array} \right.
  \begin{array}{cc} & 5 \\ 4 & \\ 4 & \end{array}
  \begin{array}{cc} \\ \\ \cdots \end{array}
  \begin{array}{cc} & 3s-1 \\ 3s-2 & \\ 3s-2 & \end{array}
  \begin{array}{cc} \\ 3s + 1\\ 3s  + 1 \end{array}
  \left. \begin{array}{cc} \\ \\ ( \textrm{ parts } \geq 3s + 3) \end{array} \right\}
 \end{align*} 
 for $s \geq 0$ to tell the cases apart.  
 
 The partition accounting for the forward or backward moves on the 2-clusters is generated by 
 \begin{align*}
  \frac{ (-q; q^2)_{n_2-1} }{ (q^2; q^2)_{n_2-1} } 
  + q \frac{ (-q; q^2)_{n_2} }{ (q^2; q^2)_{n_2} } 
  = \frac{ (1 + q) (-q; q^2)_{n_2-1} }{ (q^2; q^2)_{n_2} } 
 \end{align*}
 for $n_2 \geq 1$.  
 The factor $q$ in the second term is for the extra move.  
 For $n_2 = 0$, it is simply 1, the empty partition.  
 
 The rest of the proof is the same as the proof of Theorem \ref{thmKR5}, 
 except that prestidigitating 1- or 2-clusters through the 3-clusters is not necessary.  
\end{proof}

  
  {\bf Example: } Following the notation of the proof of the above theorem, let
  \begin{align*}
    \lambda = 
   \left\{ 
   \begin{array}{cc} & \\ & 2 \\ 1 & \end{array} \right.
   \begin{array}{cc} \\ \\ 4 \end{array}
   \begin{array}{cc} & \mathbf{7} \\ \mathbf{6} & \\ \mathbf{6} & \end{array}
   \begin{array}{cc} \\ \\ 9 \end{array}
   \begin{array}{cc} \\ 11 \\ 11 \end{array}
   \begin{array}{cc} \\ \\ 13 \end{array}
   \left. \begin{array}{cc} & 16 \\ 15 & \\ 15 & \end{array}
   \right\}
   \begin{array}{cc} \\ \\ . \end{array}
  \end{align*}
  This is one of the partitions we encountered before.  
  We will examine it once more as a partition satisfying the conditions of $kr^c_{1-2}(116, 13)$.  
  $\lambda$ as such has no sediments, 
  therefore the initial forward move was applied to the smallest 2-cluster, 
  and $\eta$ has two parts.  
  
  We begin by decoding $\nu$ through the backward moves on the 3-clusters, 
  the smallest first.  
  \begin{align*}
   \big\downarrow \textrm{ one backward move on the smallest 3-cluster }
  \end{align*}
  \begin{align*}
   \left\{ 
   \begin{array}{cc} & \\ & 2 \\ 1 & \end{array} \right.
   \underbrace{
   \begin{array}{cc} \\ \\ 4 \end{array}
   \begin{array}{cc} & \mathbf{6} \\ \mathbf{5} & \\ \mathbf{5} & \end{array}
   }_{!}
   \begin{array}{cc} \\ \\ 9 \end{array}
   \begin{array}{cc} \\ 11 \\ 11 \end{array}
   \begin{array}{cc} \\ \\ 13 \end{array}
   \left. \begin{array}{cc} & 16 \\ 15 & \\ 15 & \end{array}
   \right\}
  \end{align*}
  \begin{align*}
   \big\downarrow \textrm{ adjustment }
  \end{align*}
  \begin{align*}
   \left\{ 
   \begin{array}{cc} & \\ & 2 \\ 1 & \end{array} \right.
   \begin{array}{cc} & \mathbf{5} \\ \mathbf{4} & \\ \mathbf{4} & \end{array}
   \begin{array}{cc} \\ \\ 7 \end{array}
   \begin{array}{cc} \\ \\ 9 \end{array}
   \begin{array}{cc} \\ 11 \\ 11 \end{array}
   \begin{array}{cc} \\ \\ 13 \end{array}
   \left. \begin{array}{cc} & 16 \\ 15 & \\ 15 & \end{array}
   \right\}
  \end{align*}
  \begin{align*}
   \big\downarrow \textrm{ one more backward move on the smallest 3-cluster }
  \end{align*}
  \begin{align*}
   \left\{ \begin{array}{cc} \\ \\ \end{array} \right.
   \underbrace{
   \begin{array}{cc} & \\ & 2 \\ 1 & \end{array} 
   \begin{array}{cc} & \mathbf{4} \\ \mathbf{3} & \\ \mathbf{3} & \end{array}
   }_{!}
   \begin{array}{cc} \\ \\ 7 \end{array}
   \begin{array}{cc} \\ \\ 9 \end{array}
   \begin{array}{cc} \\ 11 \\ 11 \end{array}
   \begin{array}{cc} \\ \\ 13 \end{array}
   \left. \begin{array}{cc} & 16 \\ 15 & \\ 15 & \end{array}
   \right\}
  \end{align*}
  \begin{align*}
   \big\downarrow \textrm{ adjustment }
  \end{align*}
  \begin{align*}
   \left\{ \begin{array}{cc} \\ \\ \end{array} \right.
   \begin{array}{cc} & 2 \\ 1 & \\ 1 & \end{array}
   \begin{array}{cc} & \\ & 5 \\ 4 & \end{array} 
   \begin{array}{cc} \\ \\ 7 \end{array}
   \begin{array}{cc} \\ \\ 9 \end{array}
   \begin{array}{cc} \\ 11 \\ 11 \end{array}
   \begin{array}{cc} \\ \\ 13 \end{array}
   \left. \begin{array}{cc} & \mathbf{16} \\ \mathbf{15} & \\ \mathbf{15} & \end{array}
   \right\}
  \end{align*}
  The smallest 3-cluster has been stowed after two backward moves on it, 
  thus, $\nu_1 = 3 \cdot 2 = 6$.  
  \begin{align*}
   \big\downarrow \textrm{ one backward move on the larger 3-cluster }
  \end{align*}
  \begin{align*}
   \left\{ 
   \begin{array}{cc} & 2 \\ 1 & \\ 1 & \end{array} \right. 
   \begin{array}{cc} & \\ & 5 \\ 4 & \end{array} 
   \begin{array}{cc} \\ \\ 7 \end{array}
   \begin{array}{cc} \\ \\ 9 \end{array}
   \begin{array}{cc} \\ 11 \\ 11 \end{array}
   \underbrace{
   \begin{array}{cc} \\ \\ 13 \end{array}
   \begin{array}{cc} & \mathbf{15} \\ \mathbf{14} & \\ \mathbf{14} & \end{array}
   }_{!}
   \left. \begin{array}{cc} \\ \\ \end{array} \right\}
  \end{align*}
  \begin{align*}
   \big\downarrow \textrm{ adjustment }
  \end{align*}
  \begin{align*}
   \left\{ 
   \begin{array}{cc} & 2 \\ 1 & \\ 1 & \end{array} \right. 
   \begin{array}{cc} & \\ & 5 \\ 4 & \end{array} 
   \begin{array}{cc} \\ \\ 7 \end{array}
   \begin{array}{cc} \\ \\ 9 \end{array}
   \underbrace{
   \begin{array}{cc} \\ 11 \\ 11 \end{array}
   \begin{array}{cc} & \mathbf{14} \\ \mathbf{13} & \\ \mathbf{13} & \end{array}
   }_{!}
   \left. \begin{array}{cc} \\ \\ 16 \end{array}
   \right\}
  \end{align*}
  \begin{align*}
   \big\downarrow \textrm{ adjustment }
  \end{align*}
  \begin{align*}
   \left\{ 
   \begin{array}{cc} & 2 \\ 1 & \\ 1 & \end{array} \right. 
   \begin{array}{cc} & \\ & 5 \\ 4 & \end{array} 
   \begin{array}{cc} \\ \\ 7 \end{array}
   \begin{array}{cc} \\ \\ 9 \end{array}
   \begin{array}{cc} & \mathbf{12} \\ \mathbf{11} & \\ \mathbf{11} & \end{array}
   \begin{array}{cc} \\ 14 \\ 14 \end{array}
   \left. \begin{array}{cc} \\ \\ 16 \end{array}
   \right\}
  \end{align*}
  \begin{align*}
   \big\downarrow \textrm{ one more backward move on the larger 3-cluster }
  \end{align*}
  \begin{align*}
   \left\{ 
   \begin{array}{cc} & 2 \\ 1 & \\ 1 & \end{array} \right. 
   \begin{array}{cc} & \\ & 5 \\ 4 & \end{array} 
   \begin{array}{cc} \\ \\ 7 \end{array}
   \underbrace{
   \begin{array}{cc} \\ \\ 9 \end{array}
   \begin{array}{cc} & \mathbf{11} \\ \mathbf{10} & \\ \mathbf{10} & \end{array}
   }_{!}
   \begin{array}{cc} \\ 14 \\ 14 \end{array}
   \left. \begin{array}{cc} \\ \\ 16 \end{array}
   \right\}
  \end{align*}
  \begin{align*}
   \big\downarrow \textrm{ adjustment }
  \end{align*}
  \begin{align*}
   \left\{ 
   \begin{array}{cc} & 2 \\ 1 & \\ 1 & \end{array} \right. 
   \begin{array}{cc} & \\ & 5 \\ 4 & \end{array} 
   \begin{array}{cc} \\ \\ 7 \end{array}
   \begin{array}{cc} & \mathbf{10} \\ \mathbf{9} & \\ \mathbf{9} & \end{array}
   \begin{array}{cc} \\ \\ 12 \end{array}
   \begin{array}{cc} \\ 14 \\ 14 \end{array}
   \left. \begin{array}{cc} \\ \\ 16 \end{array}
   \right\}
  \end{align*}
  \begin{align*}
   \big\downarrow \textrm{ one more backward move on the larger 3-cluster }
  \end{align*}
  \begin{align*}
   \left\{ 
   \begin{array}{cc} & 2 \\ 1 & \\ 1 & \end{array} \right. 
   \begin{array}{cc} & \\ & 5 \\ 4 & \end{array} 
   \underbrace{
   \begin{array}{cc} \\ \\ 7 \end{array}
   \begin{array}{cc} & \mathbf{9} \\ \mathbf{8} & \\ \mathbf{8} & \end{array}
   }_{!}
   \begin{array}{cc} \\ \\ 12 \end{array}
   \begin{array}{cc} \\ 14 \\ 14 \end{array}
   \left. \begin{array}{cc} \\ \\ 16 \end{array}
   \right\}
  \end{align*}
  \begin{align*}
   \big\downarrow \textrm{ adjustment }
  \end{align*}
  \begin{align*}
   \left\{ 
   \begin{array}{cc} & 2 \\ 1 & \\ 1 & \end{array} \right. 
   \begin{array}{cc} & \\ & 5 \\ 4 & \end{array} 
   \begin{array}{cc} & \mathbf{8} \\ \mathbf{7} & \\ \mathbf{7} & \end{array}
   \begin{array}{cc} \\ \\ 10 \end{array}
   \begin{array}{cc} \\ \\ 12 \end{array}
   \begin{array}{cc} \\ 14 \\ 14 \end{array}
   \left. \begin{array}{cc} \\ \\ 16 \end{array}
   \right\}
  \end{align*}
  \begin{align*}
   \big\downarrow \textrm{ one more backward move on the larger 3-cluster }
  \end{align*}
  \begin{align*}
   \left\{ 
   \begin{array}{cc} & 2 \\ 1 & \\ 1 & \end{array} \right. 
   \underbrace{
   \begin{array}{cc} & \\ & 5 \\ 4 & \end{array} 
   \begin{array}{cc} & \mathbf{7} \\ \mathbf{6} & \\ \mathbf{6} & \end{array}
   }_{!}
   \begin{array}{cc} \\ \\ 10 \end{array}
   \begin{array}{cc} \\ \\ 12 \end{array}
   \begin{array}{cc} \\ 14 \\ 14 \end{array}
   \left. \begin{array}{cc} \\ \\ 16 \end{array}
   \right\} 
  \end{align*}
  \begin{align*}
   \big\downarrow \textrm{ adjustment }
  \end{align*}
  \begin{align*}
   \left\{ 
   \begin{array}{cc} & 2 \\ 1 & \\ 1 & \end{array} \right. 
   \begin{array}{cc} & 5 \\ 4 & \\ 4 & \end{array}
   \begin{array}{cc} & \\ & \mathbf{8} \\ \mathbf{7} & \end{array} 
   \begin{array}{cc} \\ \\ 10 \end{array}
   \begin{array}{cc} \\ \\ 12 \end{array}
   \begin{array}{cc} \\ 14 \\ 14 \end{array}
   \left. \begin{array}{cc} \\ \\ 16 \end{array}
   \right\}
  \end{align*}
  At this point, we deduce that $\nu_2 = 3 \cdot 4 = 12$.  
  Also, looking at the smallest 2-cluster, $\eta_1 = 0$ can be seen.  
  Because with one more backward move on the smallest 2-cluster, 
  the intermediate partition becomes 
  \begin{align*}
   \left\{ 
   \begin{array}{cc} & 2 \\ 1 & \\ 1 & \end{array} \right. 
   \begin{array}{cc} & 5 \\ 4 & \\ 4 & \end{array}
   \begin{array}{cc} \\ 7 \\ 7 \end{array} 
   \begin{array}{cc} \\ \\ 10 \end{array}
   \begin{array}{cc} \\ \\ 12 \end{array}
   \begin{array}{cc} \\ \mathbf{14} \\ \mathbf{14} \end{array}
   \left. \begin{array}{cc} \\ \\ 16 \end{array}
   \right\}
   \begin{array}{cc} \\ \\ . \end{array}
  \end{align*}
  This must be the extra move.  
  \begin{align*}
   \big\downarrow \textrm{ five backward moves on the larger 2-cluster }
  \end{align*}
  \begin{align*}
   \left\{ 
   \begin{array}{cc} & 2 \\ 1 & \\ 1 & \end{array} \right. 
   \begin{array}{cc} & 5 \\ 4 & \\ 4 & \end{array}
   \begin{array}{cc} \\ 7 \\ 7 \end{array} 
   \begin{array}{cc} \\ & 10 \\ 9 & \end{array}
   \begin{array}{cc} \\ \\ \mathbf{12} \end{array}
   \begin{array}{cc} \\ \\ 14 \end{array}
   \left. \begin{array}{cc} \\ \\ 16 \end{array}
   \right\}
  \end{align*}
  This yields $\eta_2 = 5$.  
  Finally, it is clear that $\mu = 1+1+1$, 
  so that the partition becomes \eqref{baseptnKRc1-2-1}.  
  \begin{align*}
   \left\{ 
   \begin{array}{cc} & 2 \\ 1 & \\ 1 & \end{array} \right. 
   \begin{array}{cc} & 5 \\ 4 & \\ 4 & \end{array}
   \begin{array}{cc} \\ 7 \\ 7 \end{array} 
   \begin{array}{cc} \\ & 10 \\ 9 & \end{array}
   \begin{array}{cc} \\ \\ 11 \end{array}
   \begin{array}{cc} \\ \\ 13 \end{array}
   \left. \begin{array}{cc} \\ \\ 15 \end{array}
   \right\}
  \end{align*}
  In other words, the base partition for $n_2 > 0$.  
  The weight of $\lambda$ is indeed
  \begin{align*}
   \vert \lambda \vert = 116 = 89 + 3 + (1 + 5) + 18 
   = \vert \beta \vert + \vert \mu \vert + (\textrm{ extra move } + \vert \eta \vert) + \vert \nu \vert.  
  \end{align*}
  

\begin{theorem}
\label{thmKRc2-2}
  For $n, m \in \mathbb{N}$, let $kr^c_{2-2}(n, m)$ be the number of partitions of $n$ into $m$ parts
  with difference at least three at distance three such that 
  if parts at distance two differ by at most one, 
  then their sum, together with the intermediate part, 
  is $\equiv 2 \pmod{3}$.  
  Then, 
  \begin{align}
  \nonumber
  & \sum_{m,n \geq 0} kr^c_{2-2}(n,m) q^n x^m \\
  \label{eqGF-KRc2-2}
  = & \sum_{\substack{n_1, n_3 \geq 0 \\ n_2 > 0}} \frac{ 
      q^{ ( 9n_3^2 + n_3 )/2 + 2n_2^2 + n_2 + n_1^2 + 6n_3n_2 + 3n_3n_1 + 2n_2n_1 - 1 } 
      (1 + q)(-q; q^2)_{n_2 - 1} \; x^{3n_3 + 2n_2 + n_1} }
    { (q; q)_{n_1} (q^2; q^2)_{n_2} (q^3; q^3)_{n_3} } \\
  \nonumber
  + & \sum_{n_1, n_3 \geq 0} \frac{ q^{ ( 9n_3^2 + n_3 )/2 + n_1^2 + 3n_3n_1 } x^{3n_3 + n_1} }
    { (q; q)_{n_1} (q^3; q^3)_{n_3} } \\ 
  \label{eqGF-KRc2-2-2nd}
  = & \sum_{n_1, n_2, n_3 \geq 0 } \frac{ 
      q^{ ( 9n_3^2 + n_3 )/2 + 2n_2^2 + n_2 + n_1^2 + 6n_3n_2 + 3n_3n_1 + 2n_2n_1 } 
      (-1/q; q^2)_{n_2} \; x^{3n_3 + 2n_2 + n_1} }
    { (q; q)_{n_1} (q^2; q^2)_{n_2} (q^3; q^3)_{n_3} }.  
  \end{align}
\end{theorem}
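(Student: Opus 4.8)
The plan is to redo, with the necessary changes, the construction in the proof of Theorem~\ref{thmKRc1-2}, which is itself modelled on the proof of Theorem~\ref{thmKR5}. I would establish \eqref{eqGF-KRc2-2} directly and then deduce \eqref{eqGF-KRc2-2-2nd} from it by the elementary identity $q\,(-1/q;q^2)_{n_2} = (1+q)(-q;q^2)_{n_2-1}$, valid for $n_2\ge 1$ (and both sides equal to $1$ in the convention appropriate to $n_2=0$), which merges the two series on the right of \eqref{eqGF-KRc2-2} into the single series of \eqref{eqGF-KRc2-2-2nd}. So all the work is in \eqref{eqGF-KRc2-2}.

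As in the earlier proofs, the difference-at-distance-three hypothesis forces every Gordon marking to use only marks $1,2,3$, so a partition counted by $kr^c_{2-2}(n,m)$ decomposes into $1$-, $2$- and $3$-clusters; let $n_1,n_2,n_3$ be their numbers. A $3$-cluster is a triple $a\le b\le c$ with $c-a\le 1$, and among $(a,a,a)$, $(a,a,a+1)$, $(a,a+1,a+1)$ only the last, with sum $3a+2\equiv 2\pmod 3$, satisfies the sum condition of $kr^c_{2-2}$. Thus the admissible $3$-cluster shape is $(a,a+1,a+1)$ --- the same shape as in $kr_6$, and different from the $(a,a,a+1)$ of $kr^c_{1-2}$. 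The minimal stack of $n_3$ such $3$-clusters is $(1,2,2),(4,5,5),\dots,(3n_3-2,3n_3-1,3n_3-1)$, of weight $\tfrac12(9n_3^2+n_3)$; replacing $(a,a,a+1)$ by $(a,a+1,a+1)$ is precisely what turns the $\tfrac12(9n_3^2-n_3)$ of \eqref{eqGF-KRc1-2} into $\tfrac12(9n_3^2+n_3)$, while the $2$- and $1$-clusters remain unconstrained by the sum condition, exactly as for $kr_5$.

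Next I would write down the two base partitions $\beta$: the evident analogues of \eqref{baseptnKRc1-2-1} (for $n_2>0$) and \eqref{baseptnKRc1-2-2} (for $n_2=0$), obtained by putting the $3$-clusters $(3j-2,3j-1,3j-1)$ at the bottom, then, when $n_2>0$, the $2$-clusters (the smallest one docked in the position that carries the \`\`extra move'' responsible for the $(1+q)$ factor, the rest lined up Andrews--Gordon style), and finally the $1$-clusters; when $n_2=0$ the $2$-clusters are simply dropped. One then checks that these $\beta$ have minimal weight among the partitions counted by $kr^c_{2-2}(n,m)$ with the prescribed $n_1,n_2,n_3$, and that the exponents of $q$ in \eqref{eqGF-KRc2-2} are these minimal weights (with the $-1$ coming from the docked smallest $2$-cluster, exactly as in Theorem~\ref{thmKRc1-2}). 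With these base partitions one sets up the correspondence $\lambda\leftrightarrow(\beta,\mu,\eta,\nu)$ of the proof of Theorem~\ref{thmKR5}: $\mu$ a partition into $n_1$ parts generated by $1/(q;q)_{n_1}$; $\eta$ a partition into $n_2$ parts with no odd part repeated, generated via Proposition~\ref{propDistinctOdds} but with $(1+q)(-q;q^2)_{n_2-1}/(q^2;q^2)_{n_2}$ replacing $(-q;q^2)_{n_2}/(q^2;q^2)_{n_2}$ to account for the docked smallest $2$-cluster; and $\nu$ a partition into $n_3$ parts, each a multiple of $3$, generated by $1/(q^3;q^3)_{n_3}$. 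The forward moves add the parts of $\mu$ to the $1$-clusters, the parts of $\eta$ as forward moves of the second kind on the $2$-clusters, and $\tfrac13$ of the parts of $\nu$ as forward moves on the $3$-clusters, each step being weight-preserving and invertible by the corresponding backward moves; the \`\`no repeated odd part'' restriction on $\eta$ is enforced exactly by the argument in the proof of Theorem~\ref{thmKR5}. Multiplying the four generating functions yields \eqref{eqGF-KRc2-2}. Because here there is no \`\`smallest part at least $2$'' restriction forcing a lone $1$-cluster below the $3$-clusters, the $3$-clusters always sit at the very bottom of $\beta$, and no prestidigitation of $1$- or $2$-clusters through the $3$-clusters is ever required.

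The part I expect to be the main obstacle is the bookkeeping of the base partitions: determining the minimal relative placement of the bottom $3$-clusters and the smallest $2$-cluster so that difference at least three at distance three holds at their interface --- the second entry of the last $3$-cluster is now $3n_3-1$ rather than $3n_3-2$, so the smallest admissible $2$-cluster is forced one step higher than in \eqref{baseptnKRc1-2-1} --- and verifying that the \`\`extra move'' on that smallest $2$-cluster is exactly what leaves the remaining $2$-clusters behaving verbatim as in Theorem~\ref{thmKR5}. The case analysis for the forward and backward moves on the $3$-clusters also has to be rerun for the $(a,a+1,a+1)$ shape, but, as in the proof of Theorem~\ref{thmKR6}, this is if anything easier than for $kr_5$ since the $3$-clusters never have to pass any smaller cluster.
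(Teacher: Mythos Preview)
Your overall plan is right, but the base partition you propose for $n_2>0$ is wrong, and the error propagates through the rest of the argument.

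You put the $3$-clusters $(1,2,2),(4,5,5),\dots,(3n_3-2,3n_3-1,3n_3-1)$ at the very bottom and then try to dock the smallest $2$-cluster just above them. But with the last $3$-cluster ending in the repeated part $3n_3-1$, no $2$-cluster of weight less than $6n_3+3$ can sit above it: $\{3n_3,3n_3+1\}$ and $\{3n_3+1,3n_3+1\}$ both violate difference $\ge 3$ at distance three against that repeated $3n_3-1$, so the minimum is already $(3n_3+1,3n_3+2)$. There is no ``one-step-lower'' docked position above the $3$-clusters, and hence no way to produce the $-1$ in the exponent of \eqref{eqGF-KRc2-2} from your layout. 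Concretely, for $n_1=0$, $n_2=1$, $n_3=1$ your minimal partition would be $(1,2,2),(4,5)$ of weight $14$, but the true minimum is $\{1,1\},(3,4,4)$ of weight $13$.

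That example shows what actually happens: when $n_2>0$ the smallest $2$-cluster is $\begin{array}{c}1\\1\end{array}$, sitting \emph{below} all the $3$-clusters, which are thereby pushed up to $(3,4,4),(6,7,7),\dots,(3n_3,3n_3+1,3n_3+1)$; the remaining $2$-clusters start at $(3n_3+3,3n_3+4)$. The sediment that distinguishes the two cases is precisely the presence of this $\begin{array}{c}1\\1\end{array}$ (note the Remark before the proof: a repeated $1$ is permitted here). Consequently your final claim is also incorrect: the extra forward move on the smallest $2$-cluster \emph{does} require prestidigitation through the $3$-clusters, since one step turns $\{1,1\}$ into $\{1,2\}$, which already gives $4-2=2<3$ at distance three against $(3,4,4)$; only after passing through all $n_3$ of the $3$-clusters does it emerge as $(3n_3+1,3n_3+2)$ and line up with the rest. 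Once you use this base partition, everything else in your outline (the roles of $\mu,\eta,\nu$, the $(1+q)(-q;q^2)_{n_2-1}/(q^2;q^2)_{n_2}$ factor, and the algebraic passage to \eqref{eqGF-KRc2-2-2nd}) goes through exactly as in Theorem~\ref{thmKRc1-2}.
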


{\bf Remark: } A partition enumerated by $kr^c_{2-2}(n, m)$ may contain the 2-cluster 
$\begin{array}{cc} 1 \\ 1 \end{array}$, but not the 3-clusters 
$\begin{array}{cc} & 2\\ 1 & \\ 1 & \end{array}$ or $\begin{array}{cc} 1 \\ 1 \\ 1 \end{array}$, 
so it can have up to two occurrences of 1.  

\begin{proof}
 \eqref{eqGF-KRc2-2-2nd} follows from \eqref{eqGF-KRc2-2} 
 by standard algebraic manipulations, 
 so we demonstrate \eqref{eqGF-KRc2-2} only.  

 The proof is very similar to the proof of Theorem \ref{thmKRc1-2}.  
 The two base partitions are the following.  
 \begin{align}
 \nonumber
  & \left\{ \begin{array}{cc} \\ 1 \\ 1 \end{array} \right.
  \begin{array}{cc} & 4 \\ & 4 \\ 3 & \end{array}
  \begin{array}{cc} & 7 \\ & 7 \\ 6 & \end{array}
  \begin{array}{cc} \\ \\ \cdots \end{array}
  \begin{array}{cc} & 3n_3+1 \\ & 3n_3+1 \\ 3n_3 & \end{array}
  \begin{array}{cc} \\ & 3n_3 + 4 \\ 3n_3 + 3  & \end{array}
  \begin{array}{cc} \\ & 3n_3 + 6 \\ 3n_3 + 5 &  \end{array}
  \begin{array}{cc} \\ \\ \cdots \end{array} \\[10pt]
 \label{baseptnKRc2-2-1}
  & \begin{array}{cc} \\ & 3n_3 + 2n_2 \\ 3n_3 + 2n_2-1 &  \end{array}
  \begin{array}{cc} \\ \\ 3n_3 + 2n_2 + 1 \end{array}
  \begin{array}{cc} \\ \\ 3n_3 + 2n_2 + 3 \end{array}
  \begin{array}{cc} \\ \\ \cdots \end{array}
  \left. \begin{array}{cc} \\ \\ 3n_3 + 2n_2 + 2n_1-1 \end{array} \right\}
  \begin{array}{cc} \\ \\ , \end{array}
 \end{align} 
 whose weight is $( 9n_3^2 + n_3 )/2 + 2n_2^2 + n_2 + n_1^2 + 6n_3n_2 + 3n_3n_1 + 2n_2n_1 - 1 $, 
 for $n_1, n_3 \geq 0, n_2 > 0$.  
 
 \begin{align*}
  & \left\{ \begin{array}{cc} & 2 \\ & 2 \\ 1 & \end{array} \right.
  \begin{array}{cc} & 5 \\ & 5 \\ 4 & \end{array}
  \begin{array}{cc} \\ \\ \cdots \end{array}
  \begin{array}{cc} & 3n_3-1 \\ & 3n_3-1 \\ 3n_3-2 & \end{array}
  \begin{array}{cc} \\ \\ 3n_3  + 1 \end{array}
  \begin{array}{cc} \\ \\ 3n_3  + 3 \end{array}
  \begin{array}{cc} \\ \\ \cdots \end{array}
  \left. \begin{array}{cc} \\ \\ 3n_3 + 2n_1-1 \end{array} \right\}
  \begin{array}{cc} \\ \\ , \end{array}
 \end{align*} 
 whose weight is $( 9n_3^2 + n_3 )/2 + n_1^2 + 3n_3n_1$, for $n_1, n_3 \geq 0$.  
 This is not the case $n_2 = 0$ of \eqref{baseptnKRc2-2-1}.  
 
 The smallest 2-cluster in \eqref{baseptnKRc2-2-1} has one extra move forward to enter the game, 
 which entails a prestidigitation through the 3-clusters, and making \eqref{baseptnKRc2-2-1} into 
 \begin{align*}
  & \left\{ \begin{array}{cc} & 2 \\ & 2 \\ 1 & \end{array} \right.
  \begin{array}{cc} & 5 \\ & 5 \\ 4 & \end{array}
  \begin{array}{cc} \\ \\ \cdots \end{array}
  \begin{array}{cc} & 3n_3-1 \\ & 3n_3-1 \\ 3n_3-2 & \end{array}
  \begin{array}{cc} \\ & 3n_3 + 2 \\ 3n_3 + 1  & \end{array}
  \begin{array}{cc} \\ & 3n_3 + 4 \\ 3n_3 + 3  & \end{array} 
  \begin{array}{cc} \\ \\ \cdots \end{array} \\[10pt]
  & \begin{array}{cc} \\ & 3n_3 + 2n_2 \\ 3n_3 + 2n_2-1 &  \end{array}
  \begin{array}{cc} \\ \\ 3n_3 + 2n_2 + 1 \end{array}
  \begin{array}{cc} \\ \\ 3n_3 + 2n_2 + 3 \end{array}
  \begin{array}{cc} \\ \\ \cdots \end{array}
  \left. \begin{array}{cc} \\ \\ 3n_3 + 2n_2 + 2n_1-1 \end{array} \right\}
  \begin{array}{cc} \\ \\ . \end{array}
 \end{align*} 
 
 To tell the cases in which this extra move is made or not apart, 
 we simply check if $\lambda$ contains the 2-cluster 
 $\begin{array}{cc} 1 \\ 1 \end{array}$ as a sediment or not.  
\end{proof}

\begin{theorem}
\label{thmKRc2-1}
  For $n, m \in \mathbb{N}$, let $kr^c_{2-1}(n, m)$ be the number of partitions of $n$ into $m$ parts 
  with at most one occurrence of the part 1, 
  and difference at least three at distance three such that 
  if parts at distance two differ by at most one, 
  then their sum, together with the intermediate part, 
  is $\equiv 2 \pmod{3}$.  
  Then, 
  \begin{align}
  \nonumber
  & \sum_{m,n \geq 0} kr^c_{2-1}(n,m) q^n x^m \\
  \label{eqGF-KRc2-1}
  = & \sum_{n_1, n_2, n_3 \geq 0} \frac{ 
      q^{ ( 9n_3^2 + n_3 )/2 + 2n_2^2 + n_2 + n_1^2 + 6n_3n_2 + 3n_3n_1 + 2n_2n_1 } 
      (-q; q^2)_{n_2} \; x^{3n_3 + 2n_2 + n_1} }
    { (q; q)_{n_1} (q^2; q^2)_{n_2} (q^3; q^3)_{n_3} }.  
  \end{align}
\end{theorem}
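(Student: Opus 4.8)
The plan is to obtain Theorem~\ref{thmKRc2-1} as a corollary of Theorem~\ref{thmKR6} via a uniform translation of all parts, in the same spirit as the proofs of Theorems~\ref{thmKR4} and~\ref{thmMS1-4}. First I would compare the two series: the right-hand side of \eqref{eqGF-KRc2-1} agrees with the right-hand side of \eqref{eqGF_KR6} except that the exponent of $q$ is smaller by $3n_3 + 2n_2 + n_1 = m$, while the factor $(-q;q^2)_{n_2}$, the power of $x$, and all three denominators are identical; one checks directly that the $q$-exponent of \eqref{eqGF_KR6} minus $m$ is exactly the $q$-exponent of \eqref{eqGF-KRc2-1}. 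Consequently, writing $N = n + m$, the generating-function identity reduces to the combinatorial claim
\[
 kr^c_{2-1}(n, m) = kr_6(n + m, m).
\]

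To prove this claim I would use the map that sends a partition $\lambda$ counted by $kr_6(n+m, m)$ to the partition obtained by subtracting $1$ from each of its $m$ parts, with inverse adding $1$ to each part. The points to verify are all routine. Because the shift is uniform it preserves every pairwise difference, hence the Gordon marking, the $r$-cluster decomposition, and in particular the difference-at-least-three-at-distance-three condition are unchanged; this is what makes the parameters $n_1, n_2, n_3$ on the two sides correspond term by term. The constraint ``smallest part $\geq 2$'' becomes ``smallest part $\geq 1$'', which holds automatically for any partition, and ``at most one occurrence of the part $2$'' turns into ``at most one occurrence of the part $1$''. Finally, whenever parts at distance two differ by at most one, the sum of those two parts together with the intermediate part changes by exactly $3$ under the shift, so its residue modulo $3$ is unaffected and the ``$\equiv 2 \pmod 3$'' condition is retained. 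The inverse map reverses all of these observations, so the shift is a bijection between the two families that lowers the weight by $m$, which establishes the displayed identity, and Theorem~\ref{thmKR6} then yields \eqref{eqGF-KRc2-1}.

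I do not expect a genuine obstacle here. The point that does need care is the invariance of the Gordon marking and the induced cluster counts under the translation, together with the matching of the two ``at most one occurrence'' hypotheses and the observation that no forbidden configuration (a repeated $1$, i.e.\ a repeated $2$ on the $kr_6$ side, or an all-equal $3$-cluster, whose part-sum is $\equiv 0 \pmod 3$) is created or destroyed. All of this is immediate once one notes that the sum condition only involves three consecutive parts and is insensitive to a global shift by $3$ within such a triple. One could instead reprove the statement directly along the lines of Theorem~\ref{thmKR6}, constructing base partitions $\beta$ for the relevant arrangements of $1$-, $2$-, and $3$-clusters, but the translation argument is much shorter and uses nothing beyond Theorem~\ref{thmKR6}.
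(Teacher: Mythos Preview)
Your approach is exactly the paper's: the paper's proof consists of the single observation that the shift $\lambda_i\mapsto\lambda_i-1$ gives a weight-preserving-up-to-$m$ bijection between the $kr_6$ family and the $kr^c_{2-1}$ family, so the result is a corollary of Theorem~\ref{thmKR6}. Your write-up is slightly more careful than the paper's in that you state the identity in the correct direction, $kr^c_{2-1}(n,m)=kr_6(n+m,m)$, and verify the matching exponent shift $E_6-E'=3n_3+2n_2+n_1$ explicitly.
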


\begin{proof}
 It suffices to observe that $kr^c_{2-1}(n+m, m) = kr_6(n, m)$.  
 Then the result becomes a corollary of Theorem \ref{thmKR6}.  
\end{proof}

By means of shifts of all parts of a partition, 
one can put restrictions on the size of the smallest part and its number of occurrences.  
Then, the generating functions of such partitions may be obtained as 
corollaries of Theorems \ref{thmKR5}-\ref{thmKRc2-1}.  

\section{Alternative Series for Kanade and Russell's Conjectures 5-6}
\label{secKR5-6Alt}

In \cite{K-Capparelli-GG}, it is shown that 
\begin{align}
\label{eqGG1alt}
  \sum_{n \geq 0} \frac{ q^{ n^2 } (-q; q^2)_n x^n}{ (q^2; q^2)_n } 
  = & \sum_{n_1, n_2 \geq 0} 
  \frac{ q^{ 4n_2^2 + (3n_1^2 - n_1)/2 + 4n_2n_1 } x^{2n_2 + n_1} }
    { (q; q)_{n_1} (q^4; q^4)_{n_2} }.  
\end{align}

Using this formula in \eqref{eqGF_KR5}, \eqref{eqGF_KR6}, \eqref{eqGF-KRc1-2}, 
\eqref{eqGF-KRc2-2} and \eqref{eqGF-KRc2-1}, 
and a little $q$-series algebra will yield the following.  

\begin{align}
\nonumber
& \sum_{m,n \geq 0} kr_5(n,m) q^n x^m \\
\nonumber
= & \sum_{n_1, n_2, n_3 \geq 0} \frac{ q^{ ( 9n_3^2 + 5n_3 )/2 + 2n_2^2 + n_2 + n_1^2 + 6n_3n_2 + 3n_3n_1 + 2n_2n_1 } 
    (-q; q^2)_{n_2} x^{3n_3 + 2n_2 + n_1} }
  { (q; q)_{n_1} (q^2; q^2)_{n_2} (q^3; q^3)_{n_3} } \\[10pt]
\label{eqGF_KR5alt}
= & \sum_{n_1, m_2, n_3, m_4 \geq 0} \frac{ 
      q^{ 8m_4^2 + 2m_4 + ( 9n_3^2 + 5n_3 )/2 + ( 5m_2 + m_2 )/2 + n_1^2 } }
  { (q; q)_{n_1} (q; q)_{m_2} (q^3; q^3)_{n_3} (q^4; q^4)_{m_4} } \\[10pt]
\nonumber
& \times q^{12m_4n_3 + 8m_4m_2 + 4m_4n_1 + 6n_3m_2 + 3n_3n_1 + 2m_2n_1 } 
\; x^{4m_4 + 3n_3 + 2m_2 + n_1}
\end{align}
  
\begin{align}
\nonumber
& \sum_{m,n \geq 0} kr_6(n,m) q^n x^m \\
\nonumber
= & \sum_{n_1, n_2, n_3 \geq 0} \frac{ 
    q^{ ( 9n_3^2 + 7n_3 )/2 + 2n_2^2 + 3n_2 + n_1^2 + n_1 + 6n_3n_2 + 3n_3n_1 + 2n_2n_1 } 
    (-q; q^2)_{n_2} x^{3n_3 + 2n_2 + n_1} }
  { (q; q)_{n_1} (q^2; q^2)_{n_2} (q^3; q^3)_{n_3} } \\[10pt]
\label{eqGF_KR6alt}
= & \sum_{n_1, m_2, n_3, m_4 \geq 0} \frac{ 
    q^{ 8m_4^2 + 6m_4 + ( 9n_3^2 + 7n_3 )/2 + (5m_2^2 + 5m_2)/2 + n_1^2 + n_1 } }
  { (q; q)_{n_1} (q; q)_{m_2} (q^3; q^3)_{n_3} (q^4; q^4)_{m_4} } \\[10pt]
\nonumber
& \times q^{12m_4n_3 + 8m_4m_2 + 4m_4n_1 + 6n_3m_2 + 3n_3n_1 + 2m_2n_1 } 
\; x^{4m_4 + 3n_3 + 2m_2 + n_1}
\end{align}
  
\begin{align}
\nonumber
& \sum_{m,n \geq 0} kr^c_{1-2}(n,m) q^n x^m \\
\nonumber
= & \sum_{n_1, n_2, n_3 \geq 0} \frac{ 
    q^{ ( 9n_3^2 - n_3 )/2 + 2n_2^2 + n_2 + n_1^2 + 6n_3n_2 + 3n_3n_1 + 2n_2n_1 } 
    (-1/q; q^2)_{n_2} \; x^{3n_3 + 2n_2 + n_1} }
  { (q; q)_{n_1} (q^2; q^2)_{n_2} (q^3; q^3)_{n_3} } \\[10pt]
\label{eqGF-KRc1-2alt}
= & \sum_{n_1, m_2, n_3, m_4 \geq 0} \frac{ 
    q^{ 8m_4^2 + 2m_4 + ( 9n_3^2 - n_3 )/2 + (5m_2^2 + m_2)/2 + n_1^2 } }
  { (q; q)_{n_1} (q; q)_{m_2} (q^3; q^3)_{n_3} (q^4; q^4)_{m_4} } \\[10pt]
\nonumber 
& \times (1 + x^2 q^{8m_4 + 6n_3 + 4m_2 + 2n_1 + 2} ) \\[5pt]
\nonumber 
& \times  q^{12m_4n_3 + 8m_4m_2 + 4m_4n_1 + 6n_3m_2 + 3n_3n_1 + 2m_2n_1 } 
\; x^{4m_4 + 3n_3 + 2m_2 + n_1}
\end{align}

\begin{align}
\nonumber
& \sum_{m,n \geq 0} kr^c_{2-2}(n,m) q^n x^m \\
\nonumber
= & \sum_{n_1, n_2, n_3 \geq 0} \frac{ 
    q^{ ( 9n_3^2 + n_3 )/2 + 2n_2^2 + n_2 + n_1^2 + 6n_3n_2 + 3n_3n_1 + 2n_2n_1 } 
    (-1/q; q^2)_{n_2} \; x^{3n_3 + 2n_2 + n_1} }
  { (q; q)_{n_1} (q^2; q^2)_{n_2} (q^3; q^3)_{n_3} } \\[10pt]
\label{eqGF-KRc2-2alt}
= & \sum_{n_1, m_2, n_3, m_4 \geq 0} \frac{ 
    q^{ 8m_4^2 + 2m_4 + ( 9n_3^2 + n_3 )/2 + (5m_2^2 + m_2)/2 + n_1^2 } }
  { (q; q)_{n_1} (q; q)_{m_2} (q^3; q^3)_{n_3} (q^4; q^4)_{m_4} } \\[10pt]
\nonumber 
& \times (1 + x^2 q^{8m_4 + 6n_3 + 4m_2 + 2n_1 + 2} ) \\[5pt]
\nonumber 
& \times q^{12m_4n_3 + 8m_4m_2 + 4m_4n_1 + 6n_3m_2 + 3n_3n_1 + 2m_2n_1 } 
\; x^{4m_4 + 3n_3 + 2m_2 + n_1}
\end{align}

\begin{align}
\nonumber
& \sum_{m,n \geq 0} kr^c_{2-1}(n,m) q^n x^m \\
\nonumber
= & \sum_{n_1, n_2, n_3 \geq 0} \frac{ 
    q^{ ( 9n_3^2 + n_3 )/2 + 2n_2^2 + n_2 + n_1^2 + 6n_3n_2 + 3n_3n_1 + 2n_2n_1 } 
    (-q; q^2)_{n_2} \; x^{3n_3 + 2n_2 + n_1} }
  { (q; q)_{n_1} (q^2; q^2)_{n_2} (q^3; q^3)_{n_3} }  \\[10pt]
\label{eqGF-KRc2-1alt}
= & \sum_{n_1, m_2, n_3, m_4 \geq 0} \frac{ 
    q^{ 8m_4^2 + 2m_4 + ( 9n_3^2 + n_3 )/2 + ( 5m_2 + m_2 )/2 + n_1^2 } }
  { (q; q)_{n_1} (q; q)_{m_2} (q^3; q^3)_{n_3} (q^4; q^4)_{m_4} } \\[10pt]
\nonumber
& \times q^{12m_4n_3 + 8m_4m_2 + 4m_4n_1 + 6n_3m_2 + 3n_3n_1 + 2m_2n_1 } 
\; x^{4m_4 + 3n_3 + 2m_2 + n_1}
\end{align}

The combinatorics of the new formulas is as follows.  
We focus on the 2-clusters only, 
as the incorporation of the 1- and 3-clusters in the discussion is routine.  
The 2-clusters are lined up as
\begin{align*}
 \left\{ 
  \begin{array}{cc} & 2 \\ 1 & \end{array}
  \begin{array}{cc} & 4 \\ 3 & \end{array}
  \begin{array}{cc}  \\ \cdots \end{array}
  \begin{array}{cc} & 2n_2 \\ 2n_2 - 1 & \end{array} 
 \right\}
  \begin{array}{cc}  \\ . \end{array}
\end{align*}
Then, we set $n_2 = 2m_4 + m_2$ for $m_2, m_4 \in \mathbb{N}$ 
and move the $i$th largest 2-cluster $m_2-i$ times forward for $i = 1, 2, \ldots, m_2$.  
\begin{align*}
 \left\{ 
  \begin{array}{cc} & 2 \\ 1 & \end{array}
  \begin{array}{cc} & 4 \\ 3 & \end{array}
  \begin{array}{cc}  \\ \cdots \end{array}
  \begin{array}{cc} & 4m_4 \\ 4m_4 - 1 & \end{array} 
  \begin{array}{cc} & 4m_4 + 2 \\ 4m_4 + 1 & \end{array} 
  \begin{array}{cc} 4m_4 + 4 \\ 4m_4 + 4 \end{array} 
  \begin{array}{cc} & 4m_4 + 7 \\ 4m_4 + 6 & \end{array} 
  \begin{array}{cc}  \\ \cdots \end{array}
 \right\}
\end{align*}
Next, we declare the consecutive 2-clusters 
\begin{align*}
  \begin{array}{cc} & 2 \\ 1 & \end{array}
  \begin{array}{cc} & 4 \\ 3 & \end{array} 
  \begin{array}{cc}  \\ , \end{array}
  \quad
  \begin{array}{cc} & 6 \\ 5 & \end{array}
  \begin{array}{cc} & 8 \\ 7 & \end{array} 
  \begin{array}{cc}  \\ , \end{array}
  \quad
  \begin{array}{cc}  \\ \cdots \end{array}
  \begin{array}{cc}  \\ , \end{array}
  \quad
  \begin{array}{cc} & 4m_4 - 2 \\ 4m_4 - 3 & \end{array}
  \begin{array}{cc} & 4m_4 \\ 4m_4 - 1 & \end{array} 
\end{align*}
\emph{2-cluster pair}s, and the others \emph{individual 2-cluster}s.  
One forward move on an individual 2-cluster still adds one to the total weight, 
but one forward move on a 2-cluster pair adds four.  

The procession of 2-cluster pairs through individual 2-clusters 
are defined similar to movement of pairs in 
\cite[section 3]{K-Capparelli-GG}.  
The procession of 2-cluster pairs through 1-clusters, 
or prestidigitation of 2-cluster pairs through the 3-clusters 
are defined in the obvious way.  

\section{$q$-series Versions of Kanade-Russell Conjectures}
\label{secKR-qSeries}

Given a partition counter, say $kr_1(n, m)$ in Theorem \ref{thmKR1}, 
we define
\begin{align*}
 KR_1(n) = \sum_{m \geq 0} kr_1(m, n).  
\end{align*}
Then, we have the following relation between the generating functions.  
\begin{align*}
 \sum_{n \geq 0} KR_1(n) q^n = \left. \sum_{n, m \geq 0} kr_1(m, n) x^m q^n \right\vert_{x = 1}.  
\end{align*}
In other words, substituting $x = 1$ renders the track of number of parts ineffective.  

Using this idea in the respective theorems above 
gives the following conjectured $q$-series identities, 
in conjunction with \cite{KR-conj}.

\begin{conj}
\label{conjKRAnalytic}
  \begin{align}
  \label{eqConjKR1Analytic} 
    \frac{1}{ ( q, q^3, q^6, q^8; q^{9})_\infty } & \stackrel{?}{=}
    \sum_{n_1, n_2 \geq 0} \frac{ q^{3n_2^2 + n_1^2 + 3n_1n_2} }
      { (q; q)_{n_1} (q^3; q^3)_{n_2} }
    \\[10pt]
  \label{eqConjKR2Analytic} 
    \frac{1}{ ( q^2, q^3, q^6, q^7; q^{9})_\infty } & \stackrel{?}{=}
    \sum_{n_1, n_2 \geq 0} \frac{ q^{3n_2^2 + 3n_2 + n_1^2 + n_1 + 3n_1n_2} }
      { (q; q)_{n_1} (q^3; q^3)_{n_2} } 
     \\[10pt]
  \label{eqConjKR3Analytic} 
    \frac{1}{ ( q^3, q^4, q^5, q^6; q^{9})_\infty } & \stackrel{?}{=}
    \sum_{n_1, n_2 \geq 0} \frac{ q^{3n_2^2 + 3n_2 + n_1^2 + 2n_1 + 3n_1n_2} }
      { (q; q)_{n_1} (q^3; q^3)_{n_2} } 
     \\[10pt]
  \label{eqConjKR4Analytic} 
    \frac{1}{ ( q^2, q^3, q^5, q^8; q^{9})_\infty } &  \stackrel{?}{=}
    \sum_{n_1, n_2 \geq 0} \frac{ q^{3n_2^2 + 2n_2 + n_1^2 + n_1 + 3n_1n_2} }
      { (q; q)_{n_1} (q^3; q^3)_{n_2} }
  \end{align}
  \begin{align}
    \nonumber
    & \frac{1}{ ( q, q^3, q^4, q^6, q^7, q^{10}, q^{11} ; q^{12})_\infty } \\[10pt]
  \label{eqConjKR5Analytic} 
    \stackrel{?}{=} & \sum_{n_1, n_2, n_3 \geq 0} \frac{ 
    q^{ ( 9n_3^2 + 5n_3 )/2 + 2n_2^2 + n_2 + n_1^2 + 6n_3n_2 + 3n_3n_1 + 2n_2n_1 } 
      (-q; q^2)_{n_2} }
    { (q; q)_{n_1} (q^2; q^2)_{n_2} (q^3; q^3)_{n_3} } 
     \\[10pt]
    \nonumber
    = & \sum_{n_1, m_2, n_3, m_4 \geq 0} \frac{ 
	  q^{ 8m_4^2 + 2m_4 + ( 9n_3^2 + 5n_3 )/2 + ( 5m_2^2 + m_2 )/2 + n_1^2 } }
      { (q; q)_{n_1} (q; q)_{m_2} (q^3; q^3)_{n_3} (q^4; q^4)_{m_4} } \\[10pt]
    \nonumber
    & \times q^{ 12m_4n_3 + 8m_4m_2 + 4m_4n_1 + 6n_3m_2 + 3n_3n_1 + 2m_2n_1 } 
  \end{align}
  \begin{align}
  \nonumber
    & \frac{1}{ ( q^2, q^3, q^5, q^6, q^7, q^8, q^{11} ; q^{12})_\infty } \\[10pt]
  \label{eqConjKR6Analytic} 
  \stackrel{?}{=} & \sum_{n_1, n_2, n_3 \geq 0} \frac{ 
      q^{ ( 9n_3^2 + 7n_3 )/2 + 2n_2^2 + 3n_2 + n_1^2 + n_1 + 6n_3n_2 + 3n_3n_1 + 2n_2n_1 } 
      (-q; q^2)_{n_2} }
    { (q; q)_{n_1} (q^2; q^2)_{n_2} (q^3; q^3)_{n_3} } \\[10pt]
  \nonumber
= & \sum_{n_1, m_2, n_3, m_4 \geq 0} \frac{ 
    q^{ 8m_4^2 + 6m_4 + ( 9n_3^2 + 7n_3 )/2 + (5m_2^2 + 5m_2)/2 + n_1^2 + n_1 } }
  { (q; q)_{n_1} (q; q)_{m_2} (q^3; q^3)_{n_3} (q^4; q^4)_{m_4} } \\[10pt]
\nonumber
& \times q^{12m_4n_3 + 8m_4m_2 + 4m_4n_1 + 6n_3m_2 + 3n_3n_1 + 2m_2n_1 } 
  \end{align}
\end{conj}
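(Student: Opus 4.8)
The plan is to reduce each asserted identity in Conjecture~\ref{conjKRAnalytic} to the solution of a single $q$-difference equation. By Theorems~\ref{thmKR1}--\ref{thmKR6}, the right-hand side of each line of Conjecture~\ref{conjKRAnalytic} is, at $x=1$, exactly the generating function for the partitions obeying the corresponding Kanade--Russell difference conditions; so proving the conjecture is literally equivalent to proving the original combinatorial conjecture of Kanade and Russell. I would therefore keep the part-counting variable $x$ alive, work with the bivariate series $F(x,q)$ produced by those theorems, and look for a $q$-difference equation in $x$ that it satisfies, together with the obvious normalisation $F(0,q)=1$.

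First I would derive such an equation combinatorially. Splitting a partition counted by, say, $kr_1(n,m)$ according to whether its smallest cluster is a $1$-cluster or a $2$-cluster, and according to the value of that smallest part, yields a recursion relating $F(x)$, $F(qx)$ and $F(q^3x)$ (and shifted copies of these), since deleting the smallest cluster lowers the weight by a controlled amount and the length by one or two. Equivalently one manipulates the double sum on the right directly, shifting $n_1 \mapsto n_1\pm 1$ and $n_2 \mapsto n_2\pm 1$ and telescoping the $(q;q)_{n_1}$ and $(q^3;q^3)_{n_2}$ factors; the $3n_1n_2$ cross term in the exponent is precisely what forces both $qx$ and $q^3x$ to occur. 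For the mod~$12$ series one obtains an analogous but three-term structure with shifts $qx$, $q^2x$, $q^3x$, and the factor $(-q;q^2)_{n_2}$ coming from Proposition~\ref{propDistinctOdds} produces a boundary term of the form $(1+xq^{\cdots})$.

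The hard step is the other half: exhibiting an $x$-refinement of the \emph{product} side satisfying the same $q$-difference equation and the same initial condition. Kanade and Russell's products carry no known natural parameter, so one would instead have to guess an entire family $G(x,q)$ --- plausibly a ratio of theta-type series, or a Hall--Littlewood / constant-term expression --- which specialises to the product at $x=1$, and then verify the functional equation via the Jacobi triple product and Rogers--Fine type identities. An alternative line is a Bailey-pair derivation: the factors $1/(q;q)_{n_1}$, $(-q;q^2)_{n_2}/(q^2;q^2)_{n_2}$ and $1/(q^3;q^3)_{n_3}$ (and, after \eqref{eqGG1alt}, a $(q^4;q^4)_{m_4}$ piece) strongly suggest iterating Bailey chains simultaneously at the bases $q$, $q^2$, $q^3$ (and $q^4$); one would need a genuinely ``mixed-base'' Bailey lattice, since no classical single change of base links $q$, $q^2$ and $q^3$. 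I expect this mixed-base obstruction, together with the absence of a canonical $x$-deformation of the products, to be the real difficulty, and I expect the mod~$12$ identities \eqref{eqConjKR5Analytic}--\eqref{eqConjKR6Analytic}, with their triple (or quadruple) sums and the $(-q;q^2)_{n_2}$ factor, to be substantially harder than the mod~$9$ cases \eqref{eqConjKR1Analytic}--\eqref{eqConjKR4Analytic}. A more structural third option is to identify the right-hand sides with principally specialised characters of modules over an affine Lie algebra --- the mod~$9$ shape pointing toward $A_2^{(2)}$ or a closely related twisted algebra and the mod~$12$ shape toward $D_4^{(3)}$ --- in which case the known character/crystal identities would supply the products directly; but then pinning down the exact module and proving that its fermionic-form character matches the series in Conjecture~\ref{conjKRAnalytic} becomes the main obstacle instead. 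In all cases a preliminary high-order $q$-expansion check (several hundred coefficients) is the natural sanity test, though of course it would not constitute a proof.
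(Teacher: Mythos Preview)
The statement you are addressing is labelled \emph{Conjecture} in the paper, and the paper does not attempt to prove it. Immediately after the display the paper explains exactly how each line arises: \eqref{eqConjKR1Analytic} is simply the combination of the proven generating-function identity \eqref{eqGF_KR1} (Theorem~\ref{thmKR1}) with the unproven combinatorial conjecture $I_1$ of Kanade--Russell, and similarly for the other five lines. The only parts that are actually established in the paper are the unconjectured equalities between the two sum-sides in \eqref{eqConjKR5Analytic} and \eqref{eqConjKR6Analytic}, which follow from \eqref{eqGG1alt} by routine $q$-series algebra as sketched in Section~\ref{secKR5-6Alt}. Everything marked $\stackrel{?}{=}$ remains open in the paper.

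Your proposal correctly identifies that proving Conjecture~\ref{conjKRAnalytic} is equivalent to proving the original Kanade--Russell conjectures, and then sketches several plausible attack routes ($q$-difference equations, mixed-base Bailey lattices, affine Lie algebra characters). But none of these is carried through: you yourself flag the absence of a natural $x$-deformation of the product side and the lack of a mixed-base Bailey lemma as genuine obstructions, and you end with a numerical sanity check that you acknowledge is not a proof. So what you have written is a reasonable research outline, not a proof, and there is nothing to compare against in the paper because the paper offers no proof either. If your goal was to reproduce the paper's treatment of this statement, the correct answer is simply: the paper asserts it as a conjecture derived by pairing Theorems~\ref{thmKR1}--\ref{thmKR6} (and \eqref{eqGF_KR5alt}, \eqref{eqGF_KR6alt}) with the Kanade--Russell conjectures $I_1$--$I_6$, and leaves the $\stackrel{?}{=}$ signs unresolved.
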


\eqref{eqConjKR1Analytic} is a combination of \eqref{eqGF_KR1} and \cite[$I_1$]{KR-conj}, 
\eqref{eqConjKR2Analytic} of \eqref{eqGF_KR2} and \cite[$I_2$]{KR-conj}, 
\eqref{eqConjKR3Analytic} of \eqref{eqGF_KR3} and \cite[$I_3$]{KR-conj}, 
\eqref{eqConjKR4Analytic} of \eqref{eqGF_KR4} and \cite[$I_4$]{KR-conj}, 
\eqref{eqConjKR5Analytic} of \eqref{eqGF_KR5}, \eqref{eqGF_KR5alt} and \cite[$I_5$]{KR-conj}, 
and \eqref{eqConjKR6Analytic} of \eqref{eqGF_KR6}, \eqref{eqGF_KR6alt} and \cite[$I_6$]{KR-conj}.  

\section{Comments and Further Work}
\label{secConclusion}

The series constructed in this paper are different from the series constructed in \cite{KR-newseries}.  
The approach is different, as well.  

The usage of Gordon marking in the proof of Theorem \ref{thmKR1}, 
or other theorems in section \ref{secKR1-4} does not make them immensely easier.  
One can simply declare, say, in Theorem \ref{thmKR1},  
$[3k, 3k]$ or $[3k+1, 3k+2]$ admissible pairs, 
other parts singletons, and imitate the proofs in \cite{K-Capparelli-GG}.  

However, Gordon marking is vital in the proof of Theorem \ref{thmKR5}, 
or other theorems in sections \ref{secKR5-6}-\ref{secKR5-6Alt}; 
and it is prudent to have all Kanade-Russell conjectures together.  
Without Gordon marking, the proof of Theorem \ref{thmKR5} becomes more tedious than it already is.  

Normally, an $r$-cluster cannot go through an $s$-cluster if $s \geq r$ \cite{K-GordonMarking}.  
The \emph{prestidigitation} is an exception without which the proofs are 
longer and less elegant, if not impossible (please see the appendix).  

Unfortunately, in sections \ref{secKR5-6}-\ref{secKR5-6Alt}, 
one cannot make the sum condition on the 3-clusters 
$\equiv 0 \pmod{3}$ instead of $\equiv 1 \pmod{3}$ or $\equiv 2 \pmod{3}$.  
It is not possible to define forward or backward moves compatible with both Gordon marking and 
the given difference conditions.  

For instance, let $kr^c_{3-3}(n, m)$ be the number of partitions of $n$ into $m$ parts 
with difference at least three at distance three such that if the difference at distance two 
is at most one, then the sum of those parts, together with the intermediate part, 
is divisible by three.  
The 3-clusters in a partition $\lambda$ enumerated by $kr^c_{3-3}(n, m)$ must be of the form
\begin{align*}
  \left\{ \begin{array}{cc} \\ \\ ( \textrm{ parts } \leq k - 3 ) \end{array}
  \begin{array}{cc} k\\ k \\ k \end{array}
  \begin{array}{cc} \\ \\ ( \textrm{ parts } \geq k + 3 ) \end{array} \right\}
  \begin{array}{cc} \\ \\ . \end{array}
\end{align*} 
One simply cannot make a forward move on the 3-cluster in the partition below.  
\begin{align*}
  \left\{ 
  \begin{array}{cc} \mathbf{1} \\ \mathbf{1} \\ \mathbf{1} \end{array}
  \begin{array}{cc}  \\ \\ 4 \end{array}
  \right\}
  \quad \stackrel{\textrm{ move }}{\longrightarrow} \quad
  \left\{ \begin{array}{cc}  \\  \\  \end{array} \right.
  \underbrace{
  \begin{array}{cc} \mathbf{2} \\ \mathbf{2} \\ \mathbf{2} \end{array}
  \begin{array}{cc}  \\ \\ 4 \end{array}
  }_{!}
  \left. \begin{array}{cc}  \\  \\  \end{array} \right\}
  \stackrel{\textrm{ adjustment }}{\longrightarrow} 
  \left\{ \begin{array}{cc}  \\  \\  \end{array} \right.
  \underbrace{
  \begin{array}{cc}  \\ \\ 1 \end{array}
  \begin{array}{cc} \mathbf{3} \\ \mathbf{3} \\ \mathbf{3} \end{array}
  }_{!}
  \left. \begin{array}{cc}  \\  \\  \end{array} \right\}
\end{align*} 
The violation of the difference condition persists after the adjustment.  
To resolve it, we should either compromise the invariance of the number of $r$-clusters for fixed $r$, 
or define some other kind of moves.  
In short, the $\equiv 0 \pmod{3}$ case cannot be treated with the machinery developed in this paper.  

It should be possible to incorporate differences at distance four, 
so that 4-clusters enter the stage.  
However, such a venture is not advisable before we have partition identities, 
or conjectures, pertaining to difference at distance four 
as natural extensions of Kanade-Russell conjectures \cite{KR-conj}.  

Of course, the biggest open problem is the proof of Kanade-Russell conjectures.  
Using the series constructed here or in \cite{KR-newseries}, 
and Bailey pairs, will it be possible to give at least an analytic proof of the conjectures?  
A good starting point might be \cite{Stanton}.  
 
\noindent
{\bf Acknowledgements: }
We thank George E. Andrews, Alexander Berkovich, Karl Mahlburg and Dennis Stanton 
for useful discussions, suggesting references or terminology 
during the preparation of the manuscript.  
The term \emph{prestidigitation} and the story in the appendix
is due to the historian and my friend Emre Erol of Sabanc{\i} University.  

\bibliographystyle{amsplain}

\noindent
{\bf Appendix: }
Now, let's try to visualize this process with a metaphor. 

Imagine a person walking into a fancy cupcake store to taste the delicacies 
that he heard so much about from his colleagues at work. 
The cupcakes are neatly arranged in a large display case with one shelf over another. 
Each shelf has different kinds of cupcakes put into boxes of different sizes. 
There is certain logic to the way the boxes are displayed. 
The shelves have boxes with three cupcakes at the first two rows 
followed by a box with a single cupcake or two cupcakes at the back of the shelves. 

The hypothetical cupcake enthusiast starts gazing colorful cupcakes of various types 
until his eyes are fixated towards a single box with a single cupcake in it. 
The box is located behind two bigger boxes with three cupcakes in each 
at a middle shelve as per the logic of display and there is hardly any space for one 
to grab the box with the single cupcake from the back of the shelf. 
The cupcake enthusiast is certain of his choice and makes a move 
towards the box in the back to grab it. 
The shop owner at the register sees the customer’s move and immediately interrupts him: 
‘I am afraid you can’t move the box at the back of the self without my help sir! 
It’s impossible for you to squeeze your hand through the narrow space between the shelves 
without ruining the cupcakes.  

The cupcake enthusiast stops for a brief moment, 
listens to the shop owner’s warning and then he confidently keeps moving 
towards the box with the single cupcake behind the two larger boxes with three cupcakes in each. 
He thrusts his hand towards the narrow middle shelf and magic happens in the blink of an eye. 
The customer is able bring both the single-size box and the single cupcake of his choice 
to the front of the shelf albeit separately. 
The customer turned out to be a prestidigitator and performed some masterly sleight-of-hand. 
He retrieved the single cupcake of his choice by relocating it 
through the two other boxes with three cupcakes. 
The cupcake was swiftly put in an out of these larger boxes and united 
at the very front of the shelf with its original box in the end. 
The impossible became possible under this rare circumstance that allowed 
different cake to be put in and out of the boxes of three. 

The shop owner was awed. 
He asked if the same trick could be done with another middle shelf 
that had a box with two cupcakes at the back as well. 
The cupcake enthusiast tried his trick there too and it worked again!  
The box of two and the cupcakes are separately delivered to the front 
while the to cupcakes got in an out of the boxes of three. 
Not only that, he was able to put back all the boxes that he retrieved 
from the back of the middle shelf to their original places reversing his trick. 
The shopkeeper, now amused, decided to offer his cupcakes free of charge to the customer. 

Needless to say, 
each cupcake represents individual numbers and each box represents 
a free cluster of a particular size in this metaphor. 
I can only hope that the ‘prestidigitator cupcake enthusiast’s proof of his 
‘sleight-of-hand’ would also prove to be as ‘amusing’ 
for his fellow mathematicians in real life as it does in the metaphor.  

\end{document}